\numberwithin{equation}{section}
\newtheorem{thm}{Theorem}[section] 
\newtheorem{prp}[thm]{Proposition}
\newtheorem{lmm}[thm]{Lemma}
\newtheorem{mythm}{Theorem}
\newtheorem{mycrl}[mythm]{Corollary}
\def\BE#1{\begin{equation}\label{#1}}  
\def\EE{\end{equation}}
\def\iv{^{-1}}
 \def\={\;=\;}  \def\+{\,+\,} 
\def\eset{\emptyset} 
\def\lan{\langle}
\def\ran{\rangle}
\def\lr#1{\lan#1\ran}
\def\blr#1{\big\lan#1\big\ran}
\def\lrbr#1{\llbracket{#1}\rrbracket}
\def\ov#1{\overline{#1}}
\def\ti#1{\tilde{#1}}
\def\wt#1{\widetilde{#1}}
\def\e_ref#1{(\ref{#1})}
\def\smsize#1{\begin{small}#1\end{small}}
\def\sf#1{\textsf{#1}}
\def\lra{\longrightarrow}
\def\Lra{\Longrightarrow}
\def\Llra{\Longleftrightarrow}
\def\al{\alpha}
\def\eps{\varepsilon}
\def\ga{\gamma}
\def\de{\delta}
\def\la{\lambda}
\def\si{\sigma}
\def\th{\theta}
\def\a{\mathbf{a}}
\def\nd{\textnormal{d}}
\def\Ga{\Gamma}
\def\ne{\textnormal{e}}
\def\eset{\emptyset}
\def\i{\infty}
\def\hb{\hbar}
\def\cA{\mathcal A}
\def\bA{\mathbb A}
\def\bB{\mathbb B}
\def\cB{\mathcal B}
\def\C{\mathbb C}
\def\cC{\mathcal C}
\def\hD{\widetilde D_w}
\def\E{\mathbf e}
\def\bF{\mathbb F}
\def\F{\mathcal F}
\def\hF{\widetilde\F}\def\oF{\overline\F}
\def\H{\mathcal H}
\def\nH{\textnormal{H}}
\def\I{\mathfrak i}
\def\cI{\mathcal I}
\def\K{\mathcal K}
\def\cL{\mathcal L}
\def\fL{\mathfrak L}
\def\M{\mathfrak M}
\def\bM{\mathbf M}
\def\P{\mathbb P}
\def\cP{\mathcal P}
\def\fR{\mathfrak R}
\def\cS{\mathcal S}
\def\O{\mathcal O}
\def\Q{\mathbb Q}
\def\iQ{\ti\Q_i[\al]^{S_{n-1}}}
\def\hQ{\iQ_{\hb}}
\def\T{\mathbb T}
\def\U{\mathfrak U}
\def\V{\mathcal V}
\def\cY{\mathcal Y}
\def\Y{\mathbb Y}
\def\Z{\mathbb Z}
\def\cZ{\mathcal Z}
\def\X{\mathfrak X}
\def\Edg{\textnormal{Edg}}
\def\ev{\textnormal{ev}}
\def\Span{\textnormal{Span}}
\begin{document}

\thispagestyle{empty}

\title{The Genus One Gromov-Witten Invariants\\
of Calabi-Yau Complete Intersections}
\author{Alexandra Popa\thanks{Partially supported by DMS grant 0846978}}
\date{\today}
\maketitle

\begin{abstract}
\noindent
We obtain mirror formulas for the genus 1  Gromov-Witten invariants of projective Calabi-Yau
complete intersections.
We follow the approach previously used for projective hypersurfaces by extending
the scope of its algebraic results; there is little change in the geometric aspects.
As an application, we check the genus 1 BPS integrality predictions in low degrees
for all projective complete intersections of dimensions 3, 4, and~5.
\end{abstract}

\tableofcontents

\section{Mirror Symmetry Formulas}
\label{mirsym_sec}

Gromov-Witten invariants of projective varieties are counts of curves that are conjectured
(and known in some cases) to possess a rich structure.
The original mirror prediction of \cite{CaDGP} for the genus 0 GW-invariants of a quintic
threefold has since been verified and shown to be a special case of mirror formulas satisfied
by GW-invariants of complete intersections; see \cite{Gi} and \cite{LLY}.
Mirror formulas for the genus 1 GW-invariants of projective Calabi-Yau 
hypersurfaces are obtained
in \cite{g1diff} and  \cite{bcov1}, in particular confirming the prediction of \cite{BCOV}
for a quintic threefold.
In this paper, we obtain mirror formulas for the genus 1 GW-invariants of all projective 
Calabi-Yau complete intersections following the approach in \cite{bcov1},
extending \cite{ZaZ}, and using \cite{bcov0_ci} in place of~\cite{bcov0}.

Throughout this paper, $n,a_1,a_2,\ldots,a_l\ge2$ will be fixed 
integers.\footnote{The assumption that $a_k\!\neq\!1$ is used only to 
streamline the computations in Section~\ref{alg_sec}; 
Theorem~\ref{main_thm0} below is valid as long as $a_k\!\ge\!1$.} 
Let 
\begin{gather*}
\a\equiv(a_1,a_2,\ldots,a_l),\qquad \lr{\a}\equiv\prod_{k=1}^la_k,
\qquad\textnormal{and}\qquad
\a^{\a}\equiv\prod_{k=1}^la_k^{a_k}\,.
\end{gather*}
Let $\eps_0(\a)$ and $\eps_1(\a)$ be the coefficients of 
$w^{n-1-l}$ and $w^{n-2-l}$, respectively, in the power series expansion
of $\frac{(1+w)^n}{\prod\limits_{r=1}^l(1+a_rw)}$ around $w\!=\!0$.
We denote by $X_{\a}$ a smooth complete intersection in $\P^{n-1}$
of multi-degree~$\a$.
This complete intersection is Calabi-Yau  if and only if $\sum\limits_{r=1}^l a_r=n$;
from now on it will be assumed that this condition holds.
Let $N_1^d(X_{\a})$ denote the degree~$d$ genus~1 GW-invariant of~$X_{\a}$.
Note that $\eps_0(\a)$ and $\eps_1(\a)$ describe the top two Chern classes of~$X_{\a}$:
\begin{equation}\label{eps01_e}
c_{n-1-l}(X_{\a})=\eps_0(\a)\nH^{n-1-l}|_{X_{\a}}\,,\qquad
  c_{n-2-l}(X_{\a})=\eps_1(\a)\nH^{n-2-l}|_{X_{\a}}\,,
\end{equation}
where $\nH\in H^2(\P^{n-1})$ is the hyperplane class.

As in \cite{ZaZ}, we denote by
$$\cP\subset 1+q\Q(w)\big[\big[q\big]\big]$$
the subgroup of power series in $q$ with constant term~1 whose coefficients 
are rational functions in $w$ which are holomorphic at $w=0$. Thus, the evaluation map
$$\cP\to 1+q\Q\big[\big[q\big]\big], \qquad F(w,q)\mapsto F(0,q)\,,$$
is well-defined.
We define a map $\bM:\cP\to\cP$ by
\begin{equation}\label{Mdfn}
\bM F(w,q)\equiv\left\{1+\frac{q}{w}\frac{\nd}{\nd q}\right\}\frac{F(w,q)}{F(0,q)}.
\end{equation}
Let $\ti\F\in\cP$ be the hypergeometric series
\begin{equation}\label{F0def} 
\ti\F(w,q)\equiv \sum_{d=0}^{\i}q^d
\frac{\prod\limits_{k=1}^l\prod\limits_{r=1}^{a_kd}(a_kw\!+\!r)}
{\prod\limits_{r=1}^d(w\!+\!r)^n}\,.
\end{equation}
For $p=0,1,\ldots,n-1$, set
\begin{equation}\label{Idfn}
I_p(q)\equiv \bM^p\ti\F(0,q).
\end{equation}
For example,
\begin{equation}\label{I0dfn_e}
I_0(q)=\sum\limits_{d=0}^{\i}q^d\frac{(a_1d)!(a_2d)!\ldots(a_ld)!}{(d!)^n}.
\end{equation}
Let 
\begin{equation}\label{mirmap_e}
J(q)\equiv\frac{1}{I_0(q)}\left\{
\sum_{d=1}^{\i}q^d
\frac{\prod\limits_{k=1}^l(a_kd)!}{(d!)^n}
\left(\sum_{k=1}^{l}\sum_{r=d+1}^{a_kd}\frac{a_k}{r}\right)\right\}\quad
\hbox{and}\quad
Q\equiv q\,\ne^{J(q)}.
\end{equation}
The map $q\!\lra\!Q$ is a change of variables;
it will be called the \sf{mirror map}.

\begin{mythm}\label{main_thm0}
The genus~1 GW-invariants of a multi-degree~$\a$ 
CY CI~$X_{\a}$ in $\P^{n-1}$ are given~by:
\begin{equation*}\begin{split}
\sum_{d=1}^{\i}N^d_1(X_{\a})\,Q^d=&\frac{\lr{\a}}{24}\,\eps_0(\a)\,\left[\log I_0(q)\right]
+\frac{\lr{\a}}{24}\,\eps_1(\a)\,J(q)\\
&\quad-\begin{cases}
\frac{n-l}{48}\log\left(1\!-\!\a^{\a}q\right)+
\sum\limits_{p=0}^{\frac{n-2-l}{2}}\frac{(n-l-2p)^2}{8}\log I_p(q),
&\hbox{if}~2|(n\!-\!l),\\
\frac{n-3-l}{48}\log\left(1\!-\!\a^{\a}q\right)+
\sum\limits_{p=0}^{\frac{n-3-l}{2}}\frac{(n-l-2p)^2-1}{8}\log I_p(q),
&\hbox{if}~2\!\not|(n\!-\!l),
\end{cases}\end{split}\end{equation*}
where $Q\equiv q\,\ne^{J(q)}$. 
\end{mythm}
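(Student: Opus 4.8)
The plan is to follow the strategy of \cite{bcov1} for Calabi--Yau hypersurfaces, replacing the genus~0 input of \cite{bcov0} by the complete intersection version \cite{bcov0_ci} and the algebraic machinery of \cite{ZaZ} by its extension to several $a_k$'s (carried out in Section~\ref{alg_sec}). The genus~1 GW-invariant $N_1^d(X_{\a})$ is extracted from the equivariant genus~1 degree~$d$ integral over the moduli space $\ov\M_{1,0}(\P^{n-1},d)$ of the equivariant Euler class of the obstruction bundle whose fiber is $\bigoplus_k H^0(C,\ev^*\O(a_k))$; by the standard desingularization of the main component (as in \cite{g1diff}), this splits as a sum of a \emph{main component contribution} and \emph{boundary contributions} indexed by the genus~1 curve being contracted to a point. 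The key point is that, after localization, all of these pieces can be written in terms of the hypergeometric series $\ti\F$ of \eqref{F0def}, its transforms $\bM^p\ti\F$, and the derived quantities $I_p$, $J$, $Q$ of \eqref{Idfn}--\eqref{mirmap_e}, so that the entire computation reduces to an identity among power series with coefficients in $\Q(w)$.

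First I would set up the equivariant localization on $\ov\M_{1,0}(\P^{n-1},d)$ with respect to the standard $\T=(\C^*)^n$-action, recording the fixed-loci contributions in terms of the generating function obtained by the genus~0 mirror theorem \cite{bcov0_ci}; this expresses the relevant genus~0 two-point equivariant generating function as an explicit transform of $\ti\F$. Second, I would treat the main (desingularized) component exactly as in \cite{bcov1}, producing the terms $\frac{\lr{\a}}{24}\eps_0(\a)\log I_0(q)$ and $\frac{\lr{\a}}{24}\eps_1(\a)J(q)$ — here $\eps_0(\a),\eps_1(\a)$ enter precisely through the top two Chern classes of $X_{\a}$ via \eqref{eps01_e}, and the mirror map $Q=q\,\ne^{J(q)}$ enters through the change of variables on the genus~0 side. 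Third, I would compute the boundary contributions: these are governed by genus~1 fixed-point integrals with a contracted elliptic component and reduce, via the string and dilaton equations and the $\lambda_1$-integral $\int_{\ov\M_{1,1}}\psi_1=\frac{1}{24}$, to sums of $\log I_p(q)$ with the stated coefficients $\frac{(n-l-2p)^2}{8}$ or $\frac{(n-l-2p)^2-1}{8}$, together with the anomaly term $\frac{n-l}{48}\log(1-\a^{\a}q)$ (resp.\ $\frac{n-3-l}{48}\log(1-\a^{\a}q)$), the factor $1-\a^{\a}q$ being the value $\ti\F(w,q)$ degenerates to along the relevant nodal stratum.

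The main obstacle is the fourth and final step: proving the required \emph{self-polynomiality / recursion} relations for $\ti\F$ and its $\bM$-transforms — namely that the $I_p(q)$ satisfy the symmetry $I_pI_{n-1-l-p}\equiv I_0\cdot(\text{something explicit})$ together with the relations that make $\bM^{n-l-p}\ti\F$ and $\bM^p\ti\F$ related, and that govern the leading $w$-behavior that produces the $\log(1-\a^{\a}q)$ factors. In the hypersurface case this is the content of \cite{ZaZ}; here one must re-derive the fixed-point/residue recursions for the more complicated hypergeometric series $\ti\F$ with several factors $\prod_{r=1}^{a_kd}(a_kw+r)$, and verify that they close up in the same way. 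This is carried out in Section~\ref{alg_sec}; granting those algebraic results, assembling the main-component and boundary contributions and matching with the genus~0 mirror map yields the stated formula term by term. I expect the geometric input (Steps~1--3) to be essentially a transcription of \cite{bcov1,g1diff}, with all the genuinely new work concentrated in the algebraic Step~4.
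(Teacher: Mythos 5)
Your overall strategy --- follow \cite{bcov1} with \cite{bcov0_ci} as the genus-0 input and an extension of \cite{ZaZ} as the algebraic engine --- is exactly the paper's, and you correctly locate the genuinely new work in the hypergeometric identities. But two steps of your outline would not survive an attempt to execute them. First, the starting point is not available: there is no ``equivariant Euler class of the obstruction bundle'' on $\ov\M_{1,0}(\P^{n-1},d)$ whose integral computes $N_1^d(X_{\a})$, because $\pi_*\ev^*\bigoplus_r\O_{\P^{n-1}}(a_r)$ is not locally free in genus~1 and the genus-1 hyperplane property fails. The paper instead computes the \emph{reduced} invariants $N_1^{d;0}(X_{\a})$ of \cite{g1comp2}, for which \cite{LiZ} and \cite{g1cone} give $d\,N_1^{d;0}(X_{\a})=\blr{e(\V_1)\,\ev_1^*\nH,[\wt\M^0_{1,1}(\P^{n-1},d)]}$ on the Vakil--Zinger desingularization of the main component (note the marked point, needed for the divisor relation), and then converts reduced to standard invariants via \cite[(2.15)]{g1diff}. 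That conversion contributes explicit genus-0 integrals over the spaces $\cZ_{(m)}(X_{\a},d)$, which after the genus-0 hyperplane theorem become the $\lrbr{\log\ti\F}_{w;p}$ terms of Lemma~\ref{g1diff_lmm} and ultimately cancel against part of $\ti{B}$ in Theorem~\ref{main_thm}; they are not where the $\log I_p$ sums come from.

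Second, your attribution of terms is essentially reversed. Inside the localization on $\wt\M^0_{1,1}(\P^{n-1},d)$, the fixed loci with \emph{contracted} principal component (the $B$-graphs, which carry the $\tfrac{1}{24}$ from $\ov\M_{1,1}$) produce $\cB_i+\sum_j\ti\cB_{ij}$, i.e.\ the $\frac{\lr{\a}}{24}\eps_0(\a)\log I_0$ and $\frac{\lr{\a}}{24}\eps_1(\a)J(q)$ terms --- not a ``smooth-domain main contribution.'' The sums $\sum_p\frac{(n-l-2p)^2}{8}\log I_p$ and part of the $\log(1-\a^{\a}q)$ term come instead from the loop-type $A$-graphs (non-contracted elliptic component), via the two-point function $\wt\cZ^*_{ii}$ and the double residue \e_ref{Ztires_e}; evaluating that residue is precisely where the identities $I_p=I_{n-l-p}$, $\prod_pI_p=(1-\a^{\a}q)^{-1}$, and the asymptotics $\Phi_0=L^{(l+1)/2}$ and \e_ref{Phi1_e} of Propositions~\ref{HG_thm2} and~\ref{HG_thm1} are consumed. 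A computation organized as you describe --- contracted-elliptic strata yielding the $\log I_p$ sums via string/dilaton --- would not close up, since the coefficients $\frac{(n-l-2p)^2}{8}$ have no source there; you would also still need the non-equivariant extraction step (Lemma~\ref{algebr_lmm1} and the $\K_i$ formalism), which your outline omits entirely.
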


Since dropping a component of $\a$ equal to 1 has no effect on 
the power series $\ti\F$ in \e_ref{F0def},
this also has no effect on the right-hand side of the formula in Theorem~\ref{main_thm0}
as expected from the relation
$$N_1^d\big(X_{1,a_1,a_2,\ldots,a_l}\big)=N_1^d\big(X_{a_1,a_2,\ldots,a_l}\big).$$
If $l\!=\!1$ and thus $\a\!=\!(n)$, by the Residue Theorem on $S^2$
\begin{equation*}\begin{split}
\eps_0(\a)&=\fR_{w=0}\left\{
 \frac{(1\!+\!w)^n}{w^{n-1}(1\!+\!nw)}\right\}
=-\fR_{w=-1/n}\left\{\frac{(1\!+\!w)^n}{w^{n-1}(1\!+\!nw)}\right\}
 +\fR_{z=0}\left\{\frac{(z\!+\!1)^n}{z^2(z\!+\!n)}\right\}\\
&=\frac{(n\!-\!1)^n}{(-1)^nn^2}+1-\frac{1}{n^2}
=\frac{n^2\!-\!1+(1\!-\!n)^n}{n^2} \,,\\
\eps_1(\a)&=\fR_{w=0}\left\{
 \frac{(1\!+\!w)^n}{w^{n-2}(1\!+\!nw)}\right\}
=-\fR_{w=-1/n}\left\{\frac{(1\!+\!w)^n}{w^{n-2}(1\!+\!nw)}\right\}
 +\fR_{z=0}\left\{\frac{(z\!+\!1)^n}{z^3(z\!+\!n)}\right\}\\
&=-\frac{(n\!-\!1)^n}{(-1)^nn^3}+\frac{n\!-\!1}{2}-\frac{1}{n}+\frac{1}{n^3}
=\frac{(n\!-\!2)(n\!+\!1)}{2n}+\frac{1-(1\!-\!n)^n}{n^3}\,,
\end{split}\end{equation*}
where $\fR_{w=w_0}$ denotes the residue at $w\!=\!w_0$.
Thus, the $l\!=\!1$ case of Theorem~\ref{main_thm0} reduces to 
\cite[Theorem~2]{g1diff}.
The cases $l\!=\!n$ ($X_{\a}\!=\!\eset$) and $l\!=\!n\!-\!1$
($X_{\a}$ is $\lr{\a}\!=\!2$ points) reduce to the case $l\!=\!1$;
as explained in \cite[Section~0.3]{bcov1}
the right-hand side of the formula in Theorem~\ref{main_thm0}
vanishes as expected.

If $l\!=\!n\!-\!2$, $X_{\a}$ is a torus, either $X_3\!\subset\!\P^2$ or 
$X_{2,2}\!\subset\!\P^3$.
In this case, $N_1^d(X_{\a})$ is the number of degree~$d$ maps from genus~1 
curves to $X_{\a}$ modulo automorphisms of such maps; see \cite[0.2]{KlPa}.
Since any such map is an unramified cover of $X_{\a}$ by a torus, 
it follows that $N_1^d(X_{2,2})$ is~0 unless $d$ is divisible by $4$ 
and $N_1^{4r}(X_{2,2})$ is the number of degree~$r$ covers of $X_{2,2}$
by a torus divided by~$r$. 
Thus, using the formula \cite[(B.12)]{bcov1}
for the number of degree~$r$ unramified covers of a torus,
we obtain:
$$\sum_{d=1}^{\i}N_1^d(X_{2,2})\,Q^d=-\sum_{r=1}^{\i}\log\left(1-Q^{4r}\right).$$
This identity together with Theorem~\ref{main_thm0} implies that
$$\frac{1}{6}J(q)-\frac{1}{24}\log(1-16q)-\frac{1}{2}\log I_0(q)=-\sum_{r=1}^{\i}\log\left(1-Q^{4r}\right).$$
The same argument is applied to $X_3$ in \cite[Section~0.3]{bcov1} to obtain
$$\frac{1}{8}J(q)-\frac{1}{24}\log(1-27q)-\frac{1}{2}\log I_0(q)=-\sum_{r=1}^{\i}\log\left(1-Q^{3r}\right).$$
The latter identity is verified directly in~\cite{Sc};
we expect that similar modular-forms techniques can be used to 
verify the former identity directly as well.

If $l\!=\!n\!-\!3$, $X_{\a}$ is a K3 surface, 
either $X_4\!\subset\!\P^3$, $X_{2,3}\!\subset\!\P^4$, or 
$X_{2,2,2}\!\subset\!\P^5$.
Since 
$$\lr{\a}\eps_0(\a)=\chi(X_{\a})=24 \qquad\hbox{and}\qquad \eps_1(\a)=0,$$
by \e_ref{eps01_e}, 
the right hand-side of the formula in Theorem~\ref{main_thm0} is zero in all 3 cases, 
as expected (all GW-invariants of K3 surfaces vanish).

If $l\!=\!n\!-\!4$, $X_{\a}\!\subset\!\P^{n-1}$ is a CY threefold.
Since CY 3-folds are of a particular interest in GW-theory,
we restate the $l\!=\!n\!-\!4$ case of Theorem~\ref{main_thm0} as a corollary below.
In this case, 
\begin{equation*}\begin{split}
\eps_0(\a)&=\fR_{w=0}\left\{
 \frac{(1\!+\!w)^n}{w^4\prod\limits_{r=1}^l(1\!+\!a_rw)}\right\}\\
 &=\frac{n(n\!-\!1)(n\!-\!2)}{6}-\frac{n(n\!-\!1)}{2}\sum_{r=1}^la_r
 +n\!\!\sum_{r_1=1}^l\sum_{r_2=r_1}^l\!a_{r_1}a_{r_2}
 -\sum_{r_1=1}^l\sum_{r_2=r_1}^l\sum_{r_3=r_2}^l\!a_{r_1}a_{r_2}a_{r_3}\,,\\
\eps_1(\a)&=\fR_{w=0}\left\{
 \frac{(1\!+\!w)^n}{w^3\prod\limits_{r=1}^l(1\!+\!a_rw)}\right\}
=\frac{n(n\!-\!1)}{2}-n\sum_{r=1}^la_r
+\sum_{r_1=1}^l\sum_{r_2=r_1}^l\!a_{r_1}a_{r_2}\,.
\end{split}\end{equation*}

\begin{mycrl}\label{bcov_crl}
The genus~1 GW-invariants of a CY CI threefold $X_{\a}\subset\P^{n-1}$ are given by:
\begin{equation*}\begin{split}
\sum_{d=1}^{\i}N_1^d(X_{\a})\,Q^d&=
\left[-2+\frac{\lr{\a}}{72}\left(n-S_3(\a)\right)\right]\log I_0(q)+\frac{\lr{\a}}{48}\left(S_2(\a)-n\right)J(q)\\
&\quad+ \log\left[I_1(q)^{-\frac{1}{2}}(1-\a^{\a}q)^{-\frac{1}{12}}\right],
\end{split}\end{equation*}
where $S_p(\a)\equiv\sum\limits_{r=1}^la_r^p$ and $Q=q\,\ne^{J(q)}$.
\end{mycrl}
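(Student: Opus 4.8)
The plan is to read Corollary~\ref{bcov_crl} off Theorem~\ref{main_thm0} by specializing to $l=n-4$; no new geometric input is needed, so the whole argument is algebraic rearrangement. First I would note that here $n-l=4$ is even, so the first branch of the formula in Theorem~\ref{main_thm0} applies and the summation index $p$ runs only over $p=0$ and $p=1$. Since $\tfrac{(n-l-2p)^2}{8}=\tfrac{(4-2p)^2}{8}$ equals $2$ for $p=0$ and $\tfrac12$ for $p=1$, while $\tfrac{n-l}{48}=\tfrac1{12}$, the right-hand side of Theorem~\ref{main_thm0} becomes
\[
\frac{\lr{\a}}{24}\,\eps_0(\a)\log I_0(q)+\frac{\lr{\a}}{24}\,\eps_1(\a)\,J(q)-\frac1{12}\log\!\left(1-\a^{\a}q\right)-2\log I_0(q)-\frac12\log I_1(q).
\]
Collecting the two terms proportional to $\log I_0(q)$ and combining the remaining two logarithms into $\log\!\bigl[I_1(q)^{-1/2}(1-\a^{\a}q)^{-1/12}\bigr]$ already reproduces the shape of the formula in the corollary, with coefficient $\tfrac{\lr{\a}}{24}\eps_0(\a)-2$ in front of $\log I_0(q)$ and $\tfrac{\lr{\a}}{24}\eps_1(\a)$ in front of $J(q)$.

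It remains to check the two numerical identities $\eps_0(\a)=\tfrac13(n-S_3(\a))$ and $\eps_1(\a)=\tfrac12(S_2(\a)-n)$, which are exactly what turn the two coefficients just obtained into those stated in Corollary~\ref{bcov_crl}. For this I would start from the residue expressions for $\eps_0(\a)$ and $\eps_1(\a)$ displayed immediately before the statement of the corollary and impose the Calabi-Yau hypothesis $\sum_{r=1}^{l}a_r=n$, i.e.\ $S_1(\a)=n$. Rewriting the symmetric sums there via the Newton--Girard identities
\[
\sum_{r_1\le r_2}a_{r_1}a_{r_2}=\tfrac12\bigl(S_1(\a)^2+S_2(\a)\bigr),\qquad \sum_{r_1\le r_2\le r_3}a_{r_1}a_{r_2}a_{r_3}=\tfrac16\bigl(S_1(\a)^3+3S_1(\a)S_2(\a)+2S_3(\a)\bigr),
\]
and substituting $S_1(\a)=n$, a short computation yields the two identities; plugging these back into the rearranged formula of the first paragraph gives precisely the statement of Corollary~\ref{bcov_crl}.

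I do not expect a genuine obstacle: everything after invoking Theorem~\ref{main_thm0} is bookkeeping. The one spot that rewards a little care is the reduction of $\eps_0(\a)$, where, after the Calabi-Yau substitution, the terms cubic in the $a_r$ as well as the $n^3$ and $n^2$ terms must all cancel so as to leave only $\tfrac13(n-S_3(\a))$; I would carry out that cancellation explicitly to be sure of the signs. The reduction of $\eps_1(\a)$ is then immediate by the same manipulation.
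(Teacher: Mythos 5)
Your proposal is correct and is exactly the derivation the paper intends: Corollary~\ref{bcov_crl} is stated as the $l=n-4$ specialization of Theorem~\ref{main_thm0}, and your evaluation of the $p=0,1$ terms together with the reductions $\eps_0(\a)=\tfrac13\bigl(n-S_3(\a)\bigr)$ and $\eps_1(\a)=\tfrac12\bigl(S_2(\a)-n\bigr)$ under $S_1(\a)=n$ (via the complete homogeneous symmetric polynomial identities you cite) checks out. No gaps.
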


Tables~\ref{BPS3_table}-\ref{BPS5_table} below show low-degree genus~1 
BPS numbers for all CY CI 3, 4 and 5-folds obtained from Theorem~\ref{main_thm0}
using \cite[(34.3)]{MirSym}, \cite[(3)]{KlPa}, and \cite[(0.5)]{PaZ}, 
respectively.\footnote{Genus 1 BPS counts in higher dimensions
are yet to be defined.}
Using computer programs\footnote{based on Aleksey Zinger's programs for hypersurfaces}, 
we verified the predicted integrality of these numbers up to degree 100 
for all CY CI 3, 4, and 5-folds. 
While the degree 1 and 2 genus~1 BPS numbers are~0 as expected,
the degree 3 BPS numbers match the classical Schubert calculus on $G(3,n)$.
It should be possible to obtain the degree~4 numbers using 
the approach of~\cite{ESt}, which provides such numbers for hypersurfaces.

\begin{table}[t]\centering{\renewcommand{\arraystretch}{1.15}
\scalebox{0.85}{
\begin{tabular}{c||c|c|c|c|c}
\hline
d&3&4&5&6&7\\
\hline
$X_5$&609250&3721431625&12129909700200&31147299732677250&71578406022880761750\\
$X_{24}$&2560&17407072&24834612736&23689021707008&19078577926517760\\
$X_{33}$&3402&5520393&4820744484&3163476678678&1798399482469092\\
$X_{223}$&64&265113&198087264&89191834992&32343228035424\\
$X_{2222}$&0&14752&8782848&2672004608&615920502784\\
\hline
\end{tabular}}}
\vspace{1mm}
\caption{Low-degree genus~$1$ BPS numbers for all CY CI~$3$-folds}
\label{BPS3_table}
\vspace{5.5mm}

\centering{\renewcommand{\arraystretch}{1.15}
\scalebox{0.85}{
\begin{tabular}{c||c|c|c|c}
\hline
d&3&4&5&6\\
\hline
$X_6$&2734099200&387176346729900&26873294164654597632&1418722120880095142462400\\
$X_{25}$&9058000&845495712250&20201716419250520&320471504960631822000\\
$X_{34}$&2813440&81906297984&1006848150400512&8707175700941649792\\
$X_{224}$&47104&4277292544&42843921424384&249771462364601344\\
$X_{233}$&53928&1203128235&7776816583356&31624897877254152\\
$X_{2223}$&1024&65526084&338199639552&923753814135936\\
$X_{22222}$&0&3779200&15090827264&27474707200000\\
\hline
\end{tabular}}}
\vspace{1mm}
\caption{Low-degree genus~$1$ BPS numbers for all CY CI~$4$-folds}
\label{BPS4_table}
\vspace{5.5mm}

\centering{\renewcommand{\arraystretch}{1.15}
\scalebox{0.8}{\setlength{\tabcolsep}{1pt}
\begin{tabular}{c||c|c|c|c}
\hline
d&3&4&5&6\\
\hline
$X_7$&26123172457235&81545482364153841075&117498479295762788677099464&126043741686161819224278666855602\\
$X_{26}$&69072837120&101190144588682320&41238110240372421024768&11147640321191212498287799296\\
$X_{35}$&8659735175&4075445624973975&725876976084810684840&88498079911311785027601450\\
$X_{44}$&3950411776&1453445296487936&201129967921550639104&19073323868063994075791360\\
$X_{225}$&254083200&244005174397575&33504170048610349120&2706605385511145151653200\\
$X_{234}$&76664320&22674781508976&1639705524423750144&72802469333317263218688\\
$X_{333}$&39550437&5866761130074&289435387120696044&9086367064035583738332\\
$X_{2224}$&1507328&1349735463168&75612640683245568&2228706944980098304000\\
$X_{2233}$&1532160&357068201643&13410965796358752&278702674357074092928\\
$X_{22223}$&32768&21650838624&622096658307072&8565078595779227136\\
$X_{222222}$&0&1342995456&29080932827136&264415120930570240\\
\hline
\end{tabular}}}
\vspace{1mm}
\caption{Low-degree genus~$1$ BPS numbers for all CY CI~$5$-folds}
\label{BPS5_table}
\end{table}

I would like to express my deep gratitude to Aleksey Zinger 
for explaining \cite{g1diff} and \cite{bcov1} to me,
for proposing the questions answered in this paper, and for his invaluable suggestions.

\section{Outline of the proof}
\label{outline_sec}

We prove Theorem~\ref{main_thm0} following the approach used to prove 
\cite[Theorem~2]{g1diff}.
In particular, we compute the reduced genus~1 GW-invariants 
$N_1^{d;0}(X_{\a})$ of $X_{\a}$ defined in~\cite{g1comp2};
these are related to the standard genus~1 invariants by 
Lemma~\ref{g1diff_lmm} below. 

The genus~1 hyperplane theorem of~\cite{LiZ} and the desingularization
construction of~\cite{VaZ} express the reduced genus~1 GW-invariants
of~$X_{\a}$ in terms of integrals over smooth spaces of maps to~$\P^{n-1}$.
We use this in Section~\ref{local_subs} to package the numbers $N_1^{d;0}(X_{\a})$
into a power series $\X(\al,x,Q)$,
in a formal variable $Q$ and with coefficients in 
the equivariant cohomology of $\P^{n-1}$. 
As $\X(\al,x,Q)$ involves integrals on smooth moduli spaces,
the Atiyah-Bott Localization Theorem \cite{ABo} can be applied as in \cite{bcov1}. 
This leads to Proposition~\ref{equivred_prp} of Section~\ref{equivcomp_subs};
the latter expresses $\X(\al,x,Q)$ in terms of residues of some genus~0 
generating functions.

We extract ``the non-equivariant part" of $\X(\al,x,Q)$ in Section~\ref{alg_sec},
using \cite[Lemma 3.3]{bcov1} and mirror formulas for genus~0 generating functions.
This reduces the problem of computing the numbers~$N_1^{d;0}(X_{\a})$ 
to purely algebraic questions concerning the power series~\e_ref{F0def}.
These are addressed in Section~\ref{hg_sec}, which significantly extends \cite{ZaZ};
this section can be read independently of the rest of the paper.

All cohomology groups in this paper will be with rational coefficients.
We will denote by $[n]$,
whenever $n\!\in\!\Z^{\geq 0}$, the set of positive integers not exceeding~$n$:
$$[n]\equiv\big\{1,2,\ldots,n\big\}.$$
Whenever $g$, $d$, $k$ and $n$ are nonnegative integers and $X$ is a smooth subvariety of~$\P^{n-1}$, 
$\ov\M_{g,k}(X,d)$ will denote the moduli space of stable degree~$d$ maps into $X$
from genus~$g$ curves with $k$ marked points
and 
$$\ev_i\!:\ov\M_{g,k}(\P^{n-1},d)\quad [\cC,y_1,\ldots,y_k,f]\lra f(y_i), \qquad i=1,2,\ldots,k,$$
for the evaluation map at the $i$-th marked point; see \cite[Chapter~24]{MirSym}.
For each $m\!\in\!\Z^{>0}$, define
\begin{gather*}
\ov\M_{(m)}(X,d)\equiv
\left\{(b_i)_{i\in[m]}\in\prod_{i=1}^m\ov\M_{0,1}(X,d_i)\!:
d_i\!\in\!\Z^{>0},
\sum_{i=1}^m d_i\!=\!d,~
\ev_1(b_i)\!=\!\ev_1(b_{i'})~\forall\, i,i'\!\in\![m]\right\},\\
\ev_1\!: \ov\M_{(m)}(X,d)\lra X, \qquad (b_i)_{i\in[m]}\lra \ev_1(b_i),
\end{gather*}
where $i$ is any element of $[m]$.
For each $i\!\in\![m]$, let 
$$\pi_i\!: \ov\M_{(m)}(X,d)\lra 
\bigsqcup_{d_i\in \Z^{>0},\,\,d_i\leq d}\ov\M_{0,1}(X,d_i)$$
be the projection onto the $i$-th component.
If $p\!\in\!\Z^{\ge0}$, we define $\eta_p\in H^{2p}(\ov\M_{(m)}(X,d))$
to be the degree~$2p$ term of 
$$\prod_{i=1}^m\pi_i^*\frac{1}{1-\psi_1}\in 
H^*\big(\ov\M_{(m)}(X,d)\big).$$
Thus, $\eta_p$ is the sum of all degree~$p$ monomials in 
$\big\{\pi_i^*\psi_1\!:~i\!\in\![m]\big\}.$

The symmetric group on $m$ elements, $S_m$, acts on $\ov\M_{(m)}(X,d)$
by permuting the elements of each $m$-tuple of stable maps.
Let 
$$\cZ_{(m)}(X,d)\equiv\ov\M_{(m)}(X,d)\big/S_m.$$
Since the map $\ev_1$ and the cohomology class $\eta_p$
on $\ov\M_{(m)}(X,d)$ are $S_m$-invariant, they descend to the quotient:
$$\ev_1\!:  \cZ_{(m)}(X,d)\lra X \qquad\hbox{and}\qquad
\eta_p\in H^{2p}\big(\cZ_{(m)}(X,d)\big).$$

Let $\U$ be the universal curve over $\ov\M_{(m)}(\P^{n-1},d)$,
with structure map~$\pi$ and evaluation map~$\ev$:
$$\xymatrix{\U \ar[d]^{\pi} \ar[r]^{\ev} & \P^{n-1} \\
\ov\M_{(m)}(\P^{n-1},d).}$$
The orbi-sheaf
$$\pi_*\ev^*\bigoplus\limits_{r=1}^l\O_{\P^{n-1}}(a_r)\lra\ov\M_{(m)}(\P^{n-1},d)$$
is locally free; it~is the sheaf of (holomorphic) sections of 
the vector orbi-bundle
$$\V_{(m)}\equiv \ov\M_{(m)}(\cL,d)
\lra\ov\M_{(m)}(\P^{n-1},d),$$
where $\cL\!\lra\!\P^{n-1}$ is the total space 
of the vector bundle corresponding to the sheaf~$\bigoplus\limits_{r=1}^l\O_{\P^{n-1}}(a_r)$.
By the (genus-zero) hyperplane-section relation,
\begin{equation}\label{HPrel_e}
\blr{\eta_{p-2m}\ev_1^*\nH^{n-1-l-p}\!,\!\left[\cZ_{(m)}\left(X_{\a},d\right)\right]^{vir}}
\!=\!\frac{1}{m!}\blr{\eta_{p-2m}\ev_1^*\nH^{n-1-l-p}\!e(\V_{(m)}),
\left[\ov\M_{(m)}(\P^{n-1},d)\right]},
\end{equation}
where $\nH\in H^2(\P^{n-1})$ is the hyperplane class.

There is a natural surjective bundle homomorphism
$$\wt\ev_1\!:\V_{(1)}\lra\ev_1^*\bigoplus\limits_{r=1}^l\O_{\P^{n-1}}(a_r),\qquad
\left([\cC,u,\xi]\right)\lra \xi\left(x_1(\cC)\right),$$
$\ov\M_{(1)}(\P^{n-1},d)\!\equiv\!\ov\M_{0,1}(\P^{n-1},d)$,
where $x_1(\cC)$ is the marked point.
Thus,
$$\V_{(1)}'\equiv\ker\wt\ev_1\lra\ov\M_{(1)}(\P^{n-1},d)$$
is a vector 
orbi-bundle.\footnote{In the notation of Section~\ref{local_subs}, 
$\V_{(1)}=\V_0$ and $\V'_{(1)}=\V_0'$.}
It is straightforward to see~that
\begin{equation}\label{vbsplit_e}
e(\V_{(m)})=\lr{\a}\,\ev_1^*\nH^l\prod_{i=1}^m\pi_i^*e(\V_{(1)}').
\end{equation}

If $f\!=\!f(w)$ admits a Laurent series expansion around $w\!=\!0$, 
for any $p\!\in\!\Z$ we denote by $\lrbr{f(w)}_{w;p}$ the coefficient of~$w^p$. 
Let
\begin{equation}\label{coeff_e}
\left\llbracket\sum_{d=0}^{\i} f_d(w)\, Q^d\right\rrbracket_{w;p}
\equiv\sum_{d=0}^{\i}\left\llbracket f_d(w)\right\rrbracket_{w;p}\,Q^d
\qquad\text{if}\quad f_d\in\Q(w)~\forall\,d\ge0.
\end{equation}

Theorem~\ref{main_thm0} follows immediately from from
\e_ref{derivgen_e2},  Theorem~\ref{main_thm} stated at the beginning
of Section~\ref{alg_sec}, and Lemma~\ref{g1diff_lmm} below,
which extends \cite[Lemma~2.2]{g1diff} to complete intersections.

\begin{lmm}\label{g1diff_lmm}
If $X_{\a}\!\subset\!\P^{n-1}$ is a complete intersection of multi-degree $\a$, 
\begin{equation*}\begin{split}
&N_1^d(X_{\a})=N_1^{d;0}(X_{\a})\\
&\hspace{.75in}+
\frac{1}{24}\sum_{p=2}^{n-1-l}\sum_{m=1}^{2m\le p}
(-1)^m(m\!-\!1)!\,
\blr{\eta_{p-2m}\ev_1^*\left(c_{n-1-l-p}(X_{\a})\right),\left[\cZ_{(m)}\left(X_{\a},d\right)\right]^{vir}}.
\end{split}\end{equation*}
Furthermore, for all $p\!\in\!\Z^{\ge0}$
\BE{chclass_e}
c_p(X_{\a})=\left\llbracket\frac{(1\!+\!w)^n}{\prod\limits_{r=1}^l(1\!+\!a_rw)}\right\rrbracket_{w;p}\nH^p\Big|_{X_{\a}}\EE
and for all $p\!\le\!n\!-\!1\!-\!l$
\begin{equation*}\begin{split}
&\sum_{d=1}^{\i}Q^d\left(
\sum_{m=1}^{2m\le p}(-1)^m(m\!-\!1)!
\blr{\eta_{p-2m}\ev_1^*\nH^{n-1-l-p}\Big|_{X_{\a}},\left[\cZ_{(m)}\left(X_{\a},d\right)\right]^{vir}}\right)=
-\lr{\a}\left\llbracket\log\frac{\ti\F(w,q)}{I_0(q)}\right\rrbracket_{w;p},\qquad\qquad\,\,
\end{split}\end{equation*}
where $\nH\in H^2(\P^{n-1})$ is the hyperplane class,
$Q$ and $q$ are related by the mirror map~\e_ref{mirmap_e},
and $\ti\F(w,q)$ and $I_0(q)$ are given  by
by~\e_ref{F0def} and \e_ref{I0dfn_e}, respectively.
\end{lmm}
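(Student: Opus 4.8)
The plan is to establish the three assertions of Lemma~\ref{g1diff_lmm} in turn, the first being essentially geometric and the last two essentially combinatorial. For the relation between $N_1^d(X_{\a})$ and $N_1^{d;0}(X_{\a})$, I would follow \cite[Lemma~2.2]{g1diff} verbatim in its structure: the reduced genus-one invariant differs from the standard one by contributions of the ``boundary'' strata of genus-one maps whose principal component is constant, and these are expressed via the hyperplane theorem of \cite{LiZ} and the desingularization of \cite{VaZ} as integrals over the auxiliary spaces $\cZ_{(m)}(X_{\a},d)$ with insertions of $\eta_{p-2m}$ and a Chern class of~$X_{\a}$. The only change from the hypersurface case is that $\O_{\P^{n-1}}(a)$ is replaced by $\bigoplus_{r=1}^l\O_{\P^{n-1}}(a_r)$, and correspondingly the Euler class identities~\e_ref{vbsplit_e} and the splitting~\e_ref{HPrel_e} are used in place of their single-line analogues. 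I expect this step to go through with only bookkeeping changes, since the genus-one geometry is insensitive to the number of defining equations, as the introduction already advertises.

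The Chern class formula~\e_ref{chclass_e} is immediate: the total Chern class of $X_{\a}$ is $c(TX_{\a}) = (1+\nH)^n\big/\prod_{r=1}^l(1+a_r\nH)\big|_{X_{\a}}$ by the adjunction/Euler sequences, and extracting the degree-$p$ part is exactly the coefficient extraction $\lrbr{\cdot}_{w;p}$ applied to the rational function $(1+w)^n\big/\prod_r(1+a_rw)$, which is holomorphic at $w=0$. This also matches the definitions of $\eps_0(\a)$ and $\eps_1(\a)$ and the identities~\e_ref{eps01_e}.

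For the third and main assertion, I would argue as follows. Using~\e_ref{HPrel_e} and~\e_ref{vbsplit_e}, the left-hand side becomes a sum over $m$ of residue-type integrals over $\ov\M_{(m)}(\P^{n-1},d)$ with the class $\ev_1^*\nH^{n-1-l-p}\cdot\lr{\a}\,\ev_1^*\nH^l\prod_i\pi_i^*e(\V'_{(1)})$ and the $\eta$-insertion; after pushing forward along $\ev_1$ to $\P^{n-1}$ and summing the geometric-series structure coming from $\prod_i\pi_i^*\frac1{1-\psi_1}$, each factor contributes the genus-zero one-pointed generating function $\sum_{d_i}Q^{d_i}(\ev_1)_*\!\big(\frac{e(\V'_{(1)})}{1-\psi_1}\big)$, whose pushforward is governed by the mirror theorem of \cite{bcov0_ci} and is precisely what produces $\ti\F(w,q)$ under the mirror map~\e_ref{mirmap_e}, with $w$ playing the role of the hyperplane/equivariant variable. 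The combinatorial factor $(-1)^m(m-1)!/m!$ (the $1/m!$ coming from the $S_m$-quotient in $\cZ_{(m)}$) is exactly the coefficient of the power-series expansion of $\log$: $\sum_{m\ge1}\frac{(-1)^{m-1}}{m}x^m = \log(1+x)$. Assembling the sum over $m$ therefore collapses to $-\lr{\a}\,\lrbr{\log(\ti\F(w,q)/\ti\F(0,q))}_{w;p}$, and $\ti\F(0,q)=I_0(q)$ by~\e_ref{I0dfn_e}. The degree restriction $p\le n-1-l$ is what guarantees $\nH^{n-1-l-p}$ is a genuine cohomology class on $\P^{n-1}$ and that the coefficient extraction stays in the holomorphic range, so no poles at $w=0$ intrude.

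The main obstacle will be making the second paragraph's ``assembling the sum over $m$'' rigorous at the level of the equivariant/residue formalism: one must check that the pushforward of the product over the $m$ components genuinely factors as the $m$-th power of a single genus-zero generating function (this uses that the $m$ maps share only the common evaluation point, so the only interaction is the fiber product over $\ev_1$), and that the resulting product-of-series, after taking the $\lrbr{\cdot}_{w;p}$ coefficient and the $S_m$-symmetrization, reproduces the $\log$ expansion term by term with the correct normalization by $\ti\F(0,q)$ rather than by $I_0$ of a shifted index. This is precisely the point where \cite{ZaZ} must be extended from $l=1$ to general $l$; I expect the extension to be formal once the hypergeometric series~\e_ref{F0def} is substituted for its single-degree counterpart, but the coefficient-extraction identities for $\log\ti\F$ that underlie it are exactly the ``purely algebraic questions'' deferred to Section~\ref{hg_sec}, so the proof of this lemma is completed only modulo those.
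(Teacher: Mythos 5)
Your proposal matches the paper's proof in all essentials: the first identity is quoted from \cite[(2.15)]{g1diff}, the Chern class formula follows from the Euler and normal-bundle sequences, and the third identity is obtained exactly as you describe by combining \e_ref{HPrel_e}, \e_ref{vbsplit_e}, the decomposition along the small diagonal in $(\P^{n-1})^m$ (your ``fiber product over $\ev_1$''), and the $\sum_m\frac{(-1)^m}{m}$ logarithm expansion applied to the one-pointed series $Z_r(Q)$. The only clarification is that the closing identification $1+\sum_r Z_r(Q)w^{r+2}=\ne^{-J(q)w}\ti\F(w,q)/I_0(q)$ comes directly from \cite[Theorem~11.8]{Gi} together with the string relation, so the lemma is self-contained and does not actually rest on the hypergeometric identities deferred to Section~\ref{hg_sec}.
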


\begin{proof}
The first identity above is a special case of \cite[(2.15)]{g1diff}.
The second identity is immediate from
$$c(T\P^{n-1})=(1+\nH)^n \qquad\hbox{and}\qquad
c\big(N_{X_{\a}/\P^{n-1}}\big)=\prod_{r=1}^l\left(1+a_r\nH\Big|_{X_{\a}}\right).$$
It remains to verify the third identity.
For each $r\!\in\!\Z^{\ge0}$, let
$$Z_r(Q)\equiv\sum_{d=1}^{\i}Q^d\blr{\psi_1^r\ev_1^*\nH^{n-3-r}\,e(\V_{(1)}'),
[\ov\M_{(1)}(\P^{n-1},d)]}.$$
By~\e_ref{HPrel_e},~\e_ref{vbsplit_e}, 
and the decomposition along the small diagonal 
in $(\P^{n-1})^m$, the left-hand side of the third identity 
in Lemma~\ref{g1diff_lmm} above equals
\begin{equation*}\begin{split}
&\lr{\a}\sum_{d=1}^{\i}Q^d\left(\sum_{m=1}^{2m\le p}\frac{(-1)^m}{m}
\blr{\ev_1^*\nH^{n-1-p}
\prod_{i=1}^m\pi_i^*\frac{e(\V_{(1)}')}{1\!-\!\psi_1},
\left[\ov\M_{(m)}(\P^{n-1},d)\right]}\right)
\end{split}\end{equation*}

\begin{equation*}\begin{split}
&\qquad
=\lr{\a}\sum_{m=1}^{2m\le p}\frac{(-1)^m}{m}
\sum_{d=1}^{\i}Q^d\sum_{\underset{d_i>0}{\sum\limits_{i=1}^m\!d_i=d}}
\sum_{\underset{p_i\ge0}{\sum\limits_{i=1}^m\!p_i=p}}\prod_{i=1}^m
\blr{\ev_1^*\nH^{n-1-p_i}\frac{e(\V_{(1)}')}{1\!-\!\psi_1},
\left[\ov\M_{(1)}(\P^{n-1},d_i)\right]}\notag\\
&\qquad
=\lr{\a}\sum_{m=1}^{2m\le p}\frac{(-1)^m}{m}
\sum_{\underset{p_i\ge2}{\sum\limits_{i=1}^m\!p_i=p}}
\prod_{i=1}^m\!Z_{p_i-2}(Q)
=-\lr{\a}\left\llbracket\log\left(1+\sum_{r=0}^{n-3}\!Z_r(Q)w^{r+2}\right)
\right\rrbracket_{w;p}.
\end{split}\end{equation*}
The third statement of Lemma~\ref{g1diff_lmm} now follows from
\begin{equation*}
1+\sum_{r=0}^{n-3}Z_r(Q)w^{r+2}
=\ne^{-J(q)w}\frac{\ti\F(w,q)}{I_0(q)}\in\Q[w]\big[\big[q\big]\big]/w^n\,;
\end{equation*}
the last identity is obtained from \cite[Theorem~11.8]{Gi}
using  the string relation \cite[Section~26.3]{MirSym}.
\end{proof}

\section{Equivariant Setup} 
\label{introlocal_sec}

\subsection{Equivariant cohomology}
\label{equivcoh_subs}

This section reviews the basics of equivariant cohomology
following \cite[Section 1.1]{bcov1} closely and setting up related notation.

The classifying space for the $n$-torus $\T$ is $B\T\equiv(\P^{\i})^n$.
Thus, the group cohomology of~$\T$ is
$$H_{\T}^*\equiv H^*(B\T)=\Q[\al_1,\ldots,\al_n],$$
where $\al_i\!\equiv\!\pi_i^*c_1(\ga^*)$,
$\ga\!\lra\!\P^{\i}$ is the tautological line bundle,
and $\pi_i\!: (\P^{\i})^n\!\lra\!\P^{\i}$ is
the projection to the $i$-th component.
In the remainder of the paper, 
$$\al=(\al_1,\ldots,\al_n).$$
The field of fractions of $H^*_{\T}$ will be denoted by
$$\Q_{\al}\equiv \Q(\al_1,\ldots,\al_n).$$
We denote the equivariant $\Q$-cohomology of a topological space $M$
with a $\T$-action by $H_{\T}^*(M)$.
If the $\T$-action on $M$ lifts to an action on a complex vector bundle $V\!\lra\!M$,
let $\E(V)\in H_{\T}^*(M)$ denote the \sf{equivariant Euler class of} $V$.
A continuous $\T$-equivariant map $f\!:M\!\lra\!M'$ between two compact oriented 
manifolds induces a pushforward homomorphism
$$f_*\!: H_{\T}^*(M) \lra H_{\T}^*(M'),$$
which is characterized by the property that 
\begin{equation}\label{pushdfn_e}
\int_{M'}(f_*\eta)\,\eta'=\int_M\eta\,(f^*\eta') 
\qquad~\forall~\eta\!\in\! H_{\T}^*(M),\, \eta'\!\in\! H_{\T}^*(M').
\end{equation}
If $M'$ is a point, this is the integration-along-the-fiber homomorphism
$$\int_M\!: H_{\T}^*(M)\lra H_{\T}^*$$
for the fiber bundle $E\T\times_{\T}M\!\lra\!B\T$.

Throughout this paper, $\T$ will act on $\P^{n-1}$ in the standard way:
$$\big(\ne^{\I\th_1},\ldots,\ne^{\I\th_n}\big)\cdot [z_1,\ldots,z_n] 
=\big[\ne^{\I\th_1}z_1,\ldots,\ne^{\I\th_n}z_n\big].$$
This action has $n$~fixed points:
$$P_1=[1,0,\ldots,0], \quad P_2=[0,1,0,\ldots,0], 
\quad\ldots,\quad P_n=[0,\ldots,0,1].$$
For each $i\!=\!1,2,\ldots,n$, let
\begin{equation}\label{phidfn_e}
\phi_i\equiv \prod_{k\neq i}(x\!-\!\al_k) \in H_{\T}^*(\P^{n-1}).
\end{equation}
By the Atiyah-Bott Localization Theorem \cite{ABo}, 
\begin{equation}\label{phiprop_e}
\eta|_{P_i}=\int_{\P^{n-1}}\eta\phi_i \in\Q_{\al}
\quad\forall\,\eta\!\in\!H_{\T}^*(\P^{n-1})\otimes_{H^*_{\T}}\Q_{\al},~i=1,2,\ldots,n;
\end{equation}
thus,  $\phi_i$ is the equivariant Poincar\'{e} dual of $P_i$.

The standard action of $\T$ on $\P^{n-1}$
lifts to an action on the tautological bundle
$$\ga\equiv\O_{\P^{n-1}}(-1)\subset\P^{n-1}\times\C^n$$ 
by restricting the standard diagonal $\T$-action on $\P^{n-1}\times\C^n$.
The \sf{equivariant hyperplane class} is defined to be
$$x\equiv \E(\ga^*)\equiv\E\left(\O_{\P^{n-1}}(1)\right)\in H_{\T}^2(\P^{n-1}).$$
The equivariant cohomology of $\P^{n-1}$ is given by 
\begin{equation}\label{pncoh_e}
H_{\T}^*(\P^{n-1})= \Q[x,\al_1,\ldots,\al_n]\big/(x\!-\!\al_1)\ldots(x\!-\!\al_n).
\end{equation}
The restriction map on the equivariant cohomology induced by the inclusion 
$P_i\!\lra\!\P^{n-1}$ is given~by
\begin{equation}\label{restrmap_e}
H_{\T}^*(\P^{n-1})=\Q[x,\al_1,\ldots,\al_n]\big/\prod\limits_{k=1}^n(x\!-\!\al_k)
\lra H_{\T}^*(P_i)=\Q[\al_1,\ldots,\al_n], \qquad x\lra\al_i,
\end{equation}
and so
\begin{equation}\label{uniquecond_e}
\eta=0 \in H_{\T}^*(\P^{n-1}) \qquad\Llra\qquad
\eta|_{P_i}=0\in H_{\T}^* ~~\forall~i=1,2,\ldots,n.
\end{equation}

\subsection{Generating function for reduced genus 1 GW-invariants}
\label{local_subs}

As in~\cite{bcov1}, the reduced genus~$1$ GW-invariants $N_1^{d;0}(X_{\a})$ 
of $X_{\a}$ are packaged into a generating function~$\X$;
this is a power series in the formal variable $Q$ with coefficients
in the equivariant cohomology of $\P^{n-1}$. 
In this section, we define $\X$ and explain what
its relationship with $N_1^{d;0}(X_{\a})$ is; 
see \e_ref{pushclass_e} and \e_ref{derivgen_e2}. 

Let $\pi\!:\U\!\lra\!\ov\M_{g,k}(\P^{n-1},d)$ be the universal curve 
with  evaluation map~$\ev$ as before and
$$\V_0\lra\ov\M_{0,k}(\P^{n-1},d)$$
the vector bundle corresponding to the locally free sheaf
$$\bigoplus\limits_{r=1}^l
     \pi_*\ev^*\O_{\P^{n-1}}(a_r)\lra\ov\M_{0,k}(\P^{n-1},d).$$ 
The Euler class $e(\V_0)$ relates genus~0 GW-invariants of $X_{\a}\subset\P^{n-1}$
to genus~0 GW-invariants of $\P^{n-1}$;
it also appears in the genus~0 2~point generating functions 
\e_ref{Z1ptdfn_e}-\e_ref{Z2ptdfn_e} 
which are used in the proof in Theorem~\ref{main_thm}.

The genus 1 GW-invariants of $X_{\a}$ are related to the GW-invariants of 
$\P^{n-1}$ in a more complicated way.
This is partly because $\ov\M_{1,k}(\P^{n-1},d)$ is not an orbifold
and
$$\bigoplus\limits_{r=1}^l\pi_*\ev^*\O_{\P^{n-1}}(a_r)
\lra\ov\M_{1,k}(\P^{n-1},d)$$ 
is not locally free.
However, it is shown in~\cite{VaZ} that there exists
a natural desingularization 
$$p\!:\wt\M_{1,k}^0(\P^{n-1},d)\lra\ov\M_{1,k}^0(\P^{n-1},d)$$
of the main component of $\ov\M_{1,k}(\P^{n-1},d)$,
whose generic element is a map from smooth domain.
There is also a vector orbi-bundle $\V_1$ over $\wt\M_{1,k}^0(\P^{n-1},d)$
so that the diagram
\begin{equation*}\label{desing_e}
\xymatrix{\V_1 \ar[d] \ar[r]^>>>>>>>{p_*}&
 \bigoplus\limits_{r=1}^l\pi_*\ev^*\O_{\P^{n-1}}(a_r) \ar[d]\\ 
\wt\M_{1,k}^0(\P^{n-1},d)\ar[r]^p &  \ov\M_{1,k}^0(\P^{n-1},d)}
\end{equation*}
commutes.
By \cite[Theorem~1.1]{LiZ} and \cite[Theorem~1.1]{g1cone}, 
\BE{derivgen_e1} d\,N_1^{d;0}(X_{\a})
=\blr{e(\V_1)\,\ev_1^*{\nH},\big[\wt\M_{1,1}^0(\P^{n-1},d)\big]}.\EE

The standard $\T$-action on $\P^{n-1}$  induces $\T$-actions on the moduli spaces of 
$\ov\M_{g,k}(\P^{n-1},d)$ 
and lifts to an action on $\wt\M_{1,k}^0(\P^{n-1},d)$.
The evaluation maps,
$$\ev_i\!:\ov\M_{g,k}(\P^{n-1},d),\wt\M_{1,k}^0(\P^{n-1},d)\lra\P^{n-1}, 
\quad [\cC,y_1,\ldots,y_k,f]\lra f(y_i), \qquad i\in[k],$$
are $\T$-equivariant.
The natural $\T$-action on $\O_{\P^{n-1}}(-1)\!\lra\!\P^{n-1}$
induces  $\T$-actions on the sheafs $\pi_*\ev^*\O_{\P^{n-1}}(a)$ and the vector bundle
$$\V_1\lra \wt\M_{1,1}^0(\P^{n-1},d).$$
With $\ev_{1,d}$ the evaluation map on $\wt\M_{1,1}^0(\P^{n-1},d)$, let
$$\X(\al,x,Q)\equiv\sum_{d=1}^{\i}Q^d(\ev_{1,d*}\E(\V_1)\big)\in 
\big(H_{\T}^{n-2}(\P^{n-1})\big)\big[\big[Q\big]\big].$$
By~\e_ref{pncoh_e}, 
\begin{equation}\label{pushclass_e}
\X(\al,x,Q)=\X_0(Q)x^{n-2}+\X_1(\al,Q)x^{n-3}+\ldots+\X_{n-2}(\al,Q)x^0,
\end{equation}
for some $\X_0\!\in\!\Q\big[\big[Q\big]\big]$ and power series $\X_p\in\Q[\al_1,\ldots,\al_n]\big[\big[Q\big]\big]$,
whose coefficients are symmetric degree~$p$ homogeneous polynomials 
in $\al_1,\ldots,\al_n$.
By~\e_ref{derivgen_e1} and \e_ref{pushdfn_e}, 
\begin{equation}\label{derivgen_e2}
Q\frac{\nd}{\nd Q}\sum_{d=1}^{\i}N_1^{d;0}(X_{\a})\,Q^d=\X_0(Q).
\end{equation}
By~\e_ref{restrmap_e}, \e_ref{phiprop_e}, and~\e_ref{pushdfn_e}, 
\begin{equation}\label{Frestr_e}\begin{split}
\X(\al,\al_i,Q) &=\X(\al,x,Q)\big|_{P_i}
=\sum_{d=1}^{\i}Q^d\int_{\P^{n-1}}\big(\ev_{1,d*}\E(\V_1)\big)\phi_i\\
&=\sum_{d=1}^{\i}Q^d\int_{\wt\M_{1,1}^0(\P^{n-1},d)}\E(\V_1)\ev_1^*\phi_i
\in\Q_{\al}\big[\big[Q\big]\big]
\end{split}\end{equation}
for each $i\!=\!1,2,\ldots,n$.
Since $\X$ is symmetric in $\al_1,\ldots,\al_n$,
$\X_0\in\Q\big[\big[Q\big]\big]$ is completely  determined by either of the
$n$ power series in~\e_ref{Frestr_e}.
We use this to obtain the explicit formula for $\X_0$ given
in Theorem~\ref{main_thm}.

\subsection{A localization proposition}
\label{equivcomp_subs}

As in \cite{bcov1}, we express $\X(\al,\al_i,Q)$ in terms of residues 
of genus 0 generating functions.
Proposition~\ref{equivred_prp} below is the analogue of 
\cite[Propositions~1.1,~1.2]{bcov1};
its proof is essentially identical to the proof of 
\cite[Propositions~1.1,~1.2]{bcov1} in \cite[Section~2]{bcov1}.
In this section, we set up the notation needed to 
state Proposition~\ref{equivred_prp}, motivate it, and describe 
the few minor changes needed in  \cite[Section~2]{bcov1} for a complete proof
of this proposition.
In the remainder of this paper, we will use Proposition~\ref{equivred_prp}
to obtain an explicit formula for~$\X_0$.

If $f$ is a rational function in $\hb$ and possibly other variables and
$\hb_0\!\in\!S^2$, let $\fR_{\hb=\hb_0}f(\hb)$ denote the residue
of the one-form $f(\hb)d\hb$ at $\hb\!=\!\hb_0$;
thus,
$$\fR_{\hb=\i}f(\hb)\equiv-\fR_{w=0}\big\{w^{-2}f(w^{-1})\big\}.$$
If $f$ involves variables other than $\hb$, $\fR_{\hb=\hb_0}f(\hb)$
is a function of the other variables.
If $f$ is a power series in $Q$ with coefficients that are rational
functions in~$\hb$ and possibly other variables, 
let $\fR_{\hb=\hb_0}f(\hb)$ denote the power series in~$Q$ obtained by 
replacing each of the coefficients by its residue at $\hb\!=\!\hb_0$.
If $\hb_1,\ldots,\hb_k$ is a collection of points in~$S^2$, not necessarily distinct, 
we define
$$\fR_{\hb=\hb_1,\ldots,\hb_k}f(\hb) \equiv
\sum_{z\in\{\hb_1,\ldots,\hb_k\}}\!\!\!\!\!\! \fR_{\hb=z}f(\hb).$$
If $\hb_0\!\in\!\C$ or $\hb_0$ is one of the ``other'' variables in $f$, let
$$\fR_{\hb=-\a\hb_0}f(\hb) \equiv \fR_{\hb=-a_1\hb_0,\ldots,-a_l\hb_0}f(\hb).$$
For instance, if $\a=(2,2,3,3,3,3)$ and $\al_i$ is one of the other variables, then $$\fR_{\hb=-\a\al_i}f(\hb)\equiv\fR_{\hb=-2\al_i}f(\hb)+\fR_{\hb=-3\al_i}f(\hb).$$
 
Since the $\T$-equivariant bundle homomorphism
$$\wt\ev_1\!:\V_0\lra\bigoplus_{r=1}^l\ev_1^*\O_{\P^{n-1}}(a_r),
\qquad [\cC,x_1,\ldots,x_k,f,\xi]\lra\big[\xi(x_1(\cC))\big],$$
is surjective, its kernel
\begin{equation*}\begin{split}
\V_0'\equiv\ker\wt\ev_1\lra\ov\M_{0,k}(\P^{n-1},d)\,,
\end{split}\end{equation*}
is a $\T$-equivariant vector bundle.
Since the $\T$-action on $\ov\M_{g,k}(\P^{n-1},d)$
lifts naturally to the tautological tangent line bundles $L_i$,
there are well-defined equivariant $\psi$-classes
$$\psi_i\equiv c_1(L_i^*)\in H_{\T}^*\big(\ov\M_{g,k}(\P^{n-1},d)\big);$$
see \cite[Section~25.2]{MirSym}.
For all $i,j\!=\!1,2,\ldots,n,$ let
\begin{alignat}{1}
\label{Z1ptdfn_e}
\cZ_i^*(\hb,Q) &\equiv \sum_{d=1}^{\i}Q^d
\int_{\ov\M_{0,2}(\P^{n-1},d)}\frac{\E(\V_0')}{\hb\!-\!\psi_1}\ev_1^*\phi_i;\\
\label{Z1pt2dfn_e}
\cZ_{ij}^*(\hb,Q) &\equiv \hb^{-1}\sum_{d=1}^{\i}Q^d
\int_{\ov\M_{0,2}(\P^{n-1},d)}\frac{\E(\V_0')}{\hb\!-\!\psi_1}
\ev_1^*\phi_i\ev_2^*\phi_j;\\
\label{Z2ptdfn_e}
\wt\cZ_{ij}^*(\hb_1,\hb_2,Q) &\equiv \frac{1}{2\hb_1\hb_2}\sum_{d=1}^{\i}Q^d
\int_{\ov\M_{0,2}(\P^{n-1},d)}
\frac{\E(\V_0')}{(\hb_1\!-\!\psi_1)(\hb_2\!-\!\psi_2)}
\ev_1^*\phi_i\ev_2^*\phi_j.
\end{alignat}
Explicit formulas for these generating functions are given explicitly
in \cite[Theorem~11.8]{Gi} and \cite[Theorem~4]{bcov0_ci}.
These theorems show that in particular 
$$\cZ_i^*,\cZ_{ij}^*\in\Q_{\al}(\hb)\big[\big[Q\big]\big]
\qquad\hbox{and}\qquad \wt\cZ_{ij}^*\in\Q_{\al}(\hb_1,\hb_2)\big[\big[Q\big]\big].$$
Thus, the $\hb$-residues of these power series are well-defined.
Since the $Q$ degree~$0$ term of the power series $\cZ_i^*(\hb,Q)$ is $0$,
the residue
\begin{equation}\label{etadfn_e}
\eta_i(Q)\equiv\fR_{\hb=0}\Big\{\log\big(1\!+\!\cZ_i^*(\hb,Q)\big)\Big\}
\in\Q_{\al}\big[\big[Q\big]\big]
\end{equation}
is well-defined.
Let
\begin{equation}\label{Phi0dfn_e}
\Phi_0(\al_i,Q)\equiv\fR_{\hb=0}\Big\{
\hb^{-1}\ne^{-\eta_i(Q)/\hb}\big(1\!+\!\cZ_i^*(\hb,Q)\big)\Big\}
\in\Q_{\al}\big[\big[Q\big]\big].
\end{equation}
By \cite[Lemma 2.3]{bcov1}, 
the power series $\ne^{-\eta_i(Q)/\hb}(1\!+\!\cZ_i^*(\hb,Q))$ 
is holomorphic at $\hb\!=\!0$;
thus $\Phi_0(\al_i,Q)$ is its value at $\hb\!=\!0$.\footnote{While $l$ is meant to be 1
in \cite[Section~2.2]{bcov1}, the argument goes through for any $\a$
without any change.}
Note that the degree-zero term of $\Phi_0(\al_i,Q)$ is~1.

Proposition~\ref{equivred_prp} below is obtained by applying
the Atiyah-Bott Localization Theorem \cite{ABo} 
to the last expression in~\e_ref{Frestr_e}.
As described in detail in \cite[Sections~1.3,~1.4]{bcov1},
the fixed loci of the $\T$-action on $\wt\M_{1,1}^0(\P^{n-1},d)$ 
are indexed by decorated graphs with one marked point.
The vertices are decorated by elements of~$[n]$, indicating 
the $\T$-fixed point of $\P^{n-1}$ to which the node or component
corresponding to the vertex is mapped~to.
These graphs have either zero loops and one distinguished vertex
(as in Figure~\ref{Bfig}) or one loop (as in Figure~\ref{Afig}), depending 
on whether the stable maps they describe are constant or not 
on the principal component of the domain.\footnote{Figures~\ref{Afig}
and~\ref{Bfig} are Figures~1 and~4 in~\cite{bcov1}; 
they are used by permission to indicate what is involved in the proof 
of (the $l\!=\!1$ case of) Proposition~\ref{equivred_prp} in~\cite{bcov1}.}
The graphs with no loops are called $B$-graphs in \cite{bcov1},
while the graphs with one loop are called $A$-graphs.
In a $B$-graph, the distinguished vertex corresponds to 
the contracted principal component.
As every graph has a marked point, even the $A$-graphs have 
a distinguished vertex: the vertex {\it in} the loop closest
to the vertex to which the marked point is attached.
The distinguished vertices are indicated by thick dots 
in the four graphs in Figures~\ref{Afig} and~\ref{Bfig}.

\begin{figure}
\begin{pspicture}(-.3,-1.1)(10,2)
\psset{unit=.4cm}
\pscircle*(12,0){.3}\rput(12,-.7){\smsize{$3$}}
\psline[linewidth=.04](12,0)(12,3)\rput(11.6,3){\smsize{$\bf 1$}}
\psline[linewidth=.04](12,0)(9,2)\pscircle*(9,2){.2}
\rput(10.6,1.5){\smsize{$2$}}\rput(9.3,2.5){\smsize{$1$}}
\psline[linewidth=.04](12,0)(9,-2)\pscircle*(9,-2){.2}
\rput(10.4,-.5){\smsize{$1$}}\rput(9.3,-2.5){\smsize{$1$}}
\psline[linewidth=.04](5.5,2)(9,2)\pscircle*(5.5,2){.2}
\rput(7.3,2.5){\smsize{$2$}}\rput(5.1,2.5){\smsize{$4$}}
\psline[linewidth=.04](5.5,-2)(9,-2)\pscircle*(5.5,-2){.2}
\rput(7.3,-1.5){\smsize{$3$}}\rput(5.1,-2.5){\smsize{$3$}}
\psline[linewidth=.04](5.5,-2)(5.5,2)\rput(5,0){\smsize{$1$}}
\psline[linewidth=.04](2.5,0)(5.5,-2)\pscircle*(2.5,0){.2}
\rput(3.4,-1.2){\smsize{$2$}}\rput(2,0){\smsize{$1$}}
\psline[linewidth=.04](2.5,0)(2.5,3.5)\pscircle*(2.5,3.5){.2}
\rput(2,1.7){\smsize{$2$}}\rput(2,3.5){\smsize{$2$}}
\psline[linewidth=.04](12,0)(15,2)\pscircle*(15,2){.2}
\rput(13.4,1.5){\smsize{$2$}}\rput(14.7,2.5){\smsize{$2$}}
\psline[linewidth=.04](12,0)(15,-2)\pscircle*(15,-2){.2}
\rput(13.6,-.5){\smsize{$1$}}\rput(14.7,-2.5){\smsize{$4$}}
\psline[linewidth=.04](18,4)(15,2)\pscircle*(18,4){.2}
\rput(16.4,3.5){\smsize{$1$}}\rput(18.3,4.5){\smsize{$3$}}
\psline[linewidth=.04](18,0)(15,2)\pscircle*(18,0){.2}
\rput(16.6,1.5){\smsize{$1$}}\rput(18.3,.6){\smsize{$3$}}
\psline[linewidth=.04](18.5,-2)(15,-2)\pscircle*(18.5,-2){.2}
\rput(16.7,-1.5){\smsize{$1$}}\rput(18.8,-2.5){\smsize{$3$}}
\pscircle*(30,0){.3}\rput(30,-.7){\smsize{$2$}}
\psline[linewidth=.04](30,0)(27,2)\pscircle*(27,2){.2}
\rput(28.6,1.5){\smsize{$2$}}\rput(27.3,2.5){\smsize{$1$}}
\psline[linewidth=.04](30,0)(27,-2)\pscircle*(27,-2){.2}
\rput(28.4,-.5){\smsize{$1$}}\rput(27.3,-2.5){\smsize{$1$}}
\psline[linewidth=.04](23.5,2)(27,2)\pscircle*(23.5,2){.2}
\rput(25.3,2.5){\smsize{$2$}}\rput(23.1,2.5){\smsize{$4$}}
\psline[linewidth=.04](23.5,-2)(27,-2)\pscircle*(23.5,-2){.2}
\rput(25.3,-1.5){\smsize{$3$}}\rput(23.1,-2.5){\smsize{$3$}}
\psline[linewidth=.04](23.5,-2)(23.5,2)\rput(23,0){\smsize{$1$}}
\psline[linewidth=.04](20.5,0)(23.5,-2)\pscircle*(20.5,0){.2}
\rput(21.4,-1.2){\smsize{$2$}}\rput(20,0){\smsize{$1$}}
\psline[linewidth=.04](20.5,0)(20.5,3.5)\pscircle*(20.5,3.5){.2}
\rput(20,1.7){\smsize{$2$}}\rput(20,3.5){\smsize{$2$}}
\psline[linewidth=.04](30,0)(33,2)\pscircle*(33,2){.2}
\rput(31.4,1.5){\smsize{$2$}}\rput(32.7,2.5){\smsize{$1$}}
\psline[linewidth=.04](30,0)(33,-2)\pscircle*(33,-2){.2}
\rput(31.6,-.5){\smsize{$1$}}\rput(32.7,-2.5){\smsize{$4$}}
\psline[linewidth=.04](36,4)(33,2)\pscircle*(36,4){.2}
\rput(34.4,3.5){\smsize{$1$}}\rput(36.3,4.5){\smsize{$3$}}
\psline[linewidth=.04](36,0)(33,2)\pscircle*(36,0){.2}
\rput(34.6,1.5){\smsize{$1$}}\rput(36.3,.6){\smsize{$3$}}
\psline[linewidth=.04](36.5,-2)(33,-2)\pscircle*(36.5,-2){.2}
\rput(34.7,-1.5){\smsize{$1$}}\rput(36.8,-2.5){\smsize{$3$}}
\psline[linewidth=.04](38.5,0)(36.5,-2)\rput(38.4,.4){\smsize{$\bf 1$}}
\end{pspicture}
\caption{Decorated graphs of types $A_3$ and $\ti{A}_{33}$}
\label{Afig}
\end{figure}
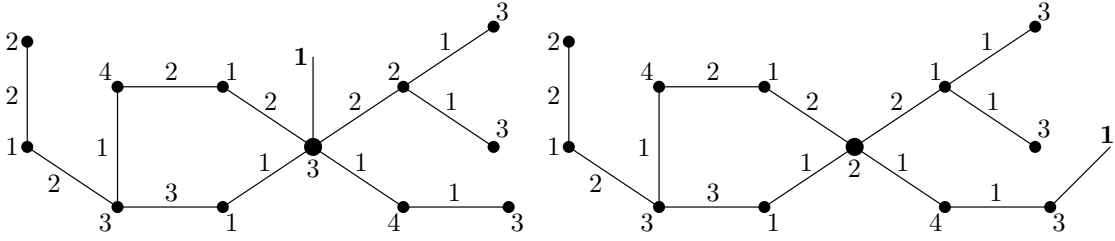

\begin{figure}
\begin{pspicture}(0,-1.3)(10,2)
\psset{unit=.4cm}
\pscircle*(10,0){.3}\rput(10,.7){\smsize{$2$}}
\psline[linewidth=.1](10,0)(7,2)\pscircle*(7,2){.2}
\rput(8.6,1.5){\smsize{$2$}}\rput(7.3,2.5){\smsize{$3$}}
\psline[linewidth=.1](10,0)(6,0)\pscircle*(6,0){.2}
\rput(7.7,.4){\smsize{$2$}}\rput(6.3,.6){\smsize{$3$}}
\psline[linewidth=.04](6,0)(3,2)\pscircle*(3,2){.2}
\rput(4.6,1.4){\smsize{$3$}}\rput(3.4,2.5){\smsize{$2$}}
\psline[linewidth=.04](6,0)(3,-2)\pscircle*(3,-2){.2}
\rput(4.1,-.8){\smsize{$1$}}\rput(3.2,-1.3){\smsize{$1$}}
\psline[linewidth=.1](10,0)(7,-1.5)\pscircle*(7,-1.5){.2}
\rput(7.7,-.7){\smsize{$2$}}\rput(6.8,-.9){\smsize{$3$}}
\psline[linewidth=.04](10,0)(8,-3)\pscircle*(8,-3){.2}
\rput(8.5,-1.6){\smsize{$2$}}\rput(7.5,-3.3){\smsize{$1$}}
\psline[linewidth=.04](10,0)(12,-2)\pscircle*(12,-2){.2}
\rput(11,-1.5){\smsize{$3$}}\rput(12.5,-2.2){\smsize{$3$}}
\psline[linewidth=.05,linestyle=dashed](10,0)(14,0)\pscircle*(14,0){.2}
\psline[linewidth=.04](14,0)(17,-1)\pscircle*(17,-1){.2}
\rput(16.2,-.3){\smsize{$2$}}\rput(17.5,-1){\smsize{$1$}}
\psline[linewidth=.04](14,0)(17,1)\pscircle*(17,1){.2}
\rput(15.6,.9){\smsize{$3$}}\rput(17.2,1.6){\smsize{$3$}}
\psline[linewidth=.04](17,1)(20,2)\pscircle*(20,2){.2}
\rput(18.6,1.9){\smsize{$1$}}\rput(20.2,2.6){\smsize{$1$}}
\psline[linewidth=.04](10,0)(13,2)\pscircle*(13,2){.2}
\rput(11.3,1.4){\smsize{$1$}}\rput(13.2,2.7){\smsize{$3$}}
\psline[linewidth=.04](10,0)(10,-2.7)\rput(10,-3.2){\smsize{$\bf 1$}}
\pscircle*(29,0){.3}\rput(29,.7){\smsize{$2$}}
\psline[linewidth=.1](29,0)(26,2)\pscircle*(26,2){.2}
\rput(27.6,1.5){\smsize{$2$}}\rput(26.3,2.5){\smsize{$3$}}
\psline[linewidth=.1](29,0)(25,0)\pscircle*(25,0){.2}
\rput(26.7,.4){\smsize{$2$}}\rput(25.3,.6){\smsize{$3$}}
\psline[linewidth=.04](25,0)(22,2)\pscircle*(22,2){.2}
\rput(23.6,1.4){\smsize{$3$}}\rput(22.4,2.5){\smsize{$2$}}
\psline[linewidth=.04](25,0)(22,-2)\pscircle*(22,-2){.2}
\rput(23.1,-.8){\smsize{$1$}}\rput(22.2,-1.3){\smsize{$1$}}
\psline[linewidth=.1](29,0)(26,-1.5)\pscircle*(26,-1.5){.2}
\rput(26.7,-.7){\smsize{$2$}}\rput(25.8,-.9){\smsize{$3$}}
\psline[linewidth=.04](29,0)(27,-3)\pscircle*(27,-3){.2}
\rput(27.5,-1.6){\smsize{$2$}}\rput(26.5,-3.3){\smsize{$1$}}
\psline[linewidth=.04](29,0)(31,-2)\pscircle*(31,-2){.2}
\rput(30,-1.5){\smsize{$3$}}\rput(31.5,-2.2){\smsize{$3$}}
\psline[linewidth=.05,linestyle=dashed](29,0)(33,0)\pscircle*(33,0){.2}
\psline[linewidth=.04](33,0)(36,-1)\pscircle*(36,-1){.2}
\rput(35.2,-.3){\smsize{$2$}}\rput(36.5,-1){\smsize{$1$}}
\psline[linewidth=.04](33,0)(36,1)\pscircle*(36,1){.2}
\rput(34.6,.9){\smsize{$3$}}\rput(36.2,1.6){\smsize{$3$}}
\psline[linewidth=.04](36,1)(39,2)\pscircle*(39,2){.2}
\rput(37.6,1.9){\smsize{$1$}}\rput(39.2,2.6){\smsize{$1$}}
\psline[linewidth=.05,linestyle=dashed](29,0)(32,2)\pscircle*(32,2){.2}
\psline[linewidth=.04](32,2)(35,3)\pscircle*(35,3){.2}
\rput(33.6,3){\smsize{$1$}}\rput(35.2,3.6){\smsize{$3$}}
\psline[linewidth=.04](32,2)(33,4)\rput(33.3,4.4){\smsize{$\bf 1$}}
\end{pspicture}
\caption{Decorated rooted trees of types $B_2$ and $\ti{B}_{22}$}
\label{Bfig}
\end{figure}
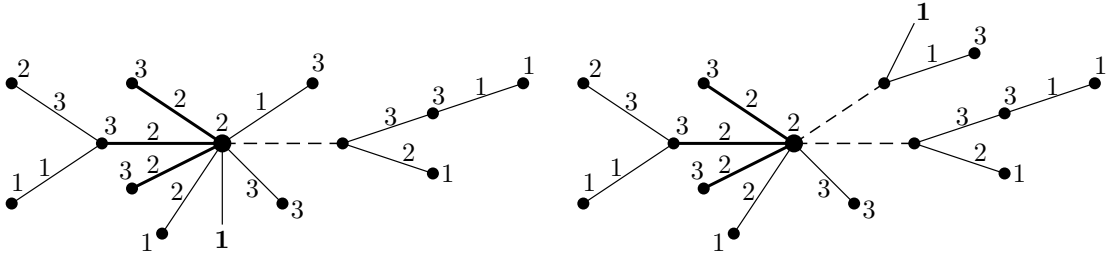

Within each of the 2~types, there are 2~sub-types of graphs, 
depending on whether the marked point is attached to 
the distinguished vertex or some other vertex.
In the former case, a graph has one special vertex label:
the number decorating the vertex to which the marked point is attached. 
In the latter case, a graph has two special vertex labels:
the number decorating the vertex to which the marked point is attached
and the number decorating the distinguished vertex.
Since $\phi_i|_{P_j}\!=\!\de_{ij}$,
only the graphs that describe stable maps taking the marked point to~$P_i$
contribute to~\e_ref{Frestr_e}; 
in these graphs the first special vertex label is~$i$.
Thus, the types of graphs that contribute to~\e_ref{Frestr_e}  
can be described as $A_i$, $\ti{A}_{ij}$, $B_i$, and~$\ti{B}_{ij}$,
with the first subscript describing the label of the vertex to which 
the marked point is attached and the second describing the label of 
the distinguished vertex if this vertex is different from the first
(the label may still be the same).

The approach of \cite{bcov1} to computing the total contribution
to~\e_ref{Frestr_e} of all graphs of a fixed type is to break every graph
at the distinguished vertex, adding  a marked point 
to each of the resulting ``strands" so that each graph is completely
encoded by its strands.
In the case of $B_i$-graphs, all strands are graphs with one marked point.
In the case of $A_i$ and $B_{ij}$-graphs, there is precisely one strand 
with two marked points;
in the former case it contributes to~$\wt\cZ_{ii}^*$,
while in the latter it contributes to~$\cZ_{ji}^*$.
In the case of $A_{ij}$-graphs, there are two strands  with two marked points,
one of which contributes to $\wt\cZ_{jj}^*$,
while the other to~$\cZ_{ji}^*$.
Each of the one-pointed strands contributes to~$\cZ_j^*$.
While the number of one-pointed strands can be arbitrary large,
it is possible to sum up over all arrangements of such strands
because of a special property of the power series~$\cZ_i^*$
described in \cite[Section~2.2]{bcov1}.
This reduces the total contribution, 
$\cA_i$, $\ti{\cA}_{ij}$, $\cB_i$, or $\ti{\cB}_{ij}$ 
of all graphs of a fixed type, 
$A_i$, $\ti{A}_{ij}$, $B_i$, or $\ti{B}_{ij}$, 
to a fairly simple expression involving $\cZ_i^*$, $\cZ_{ij}^*$,
and/or~$\wt\cZ_{ij}^*$.

\begin{prp}\label{equivred_prp}
For every $i\!=\!1,2,\ldots,n$,
\BE{equivred_e}\X(\al,\al_i,Q)
=\cA_i(Q)+\sum_{j=1}^n\ti\cA_{ij}(Q)+\cB_i(Q)+\sum_{j=1}^n\ti\cB_{ij}(Q),\EE
where
\begin{equation*}\begin{split}
\cA_i(Q)&=\frac{1}{\Phi_0(\al_i,Q)}\fR_{\hb_1=0}\left\{\fR_{\hb_2=0}
\left\{\ne^{-\eta_i(Q)/\hb_1}\ne^{-\eta_i(Q)/\hb_2}\wt\cZ_{ii}^*(\hb_1,\hb_2,Q)
 \right\}\right\};\\
\ti\cA_{ij}(Q)&=\frac{\cA_j(Q)}{\prod\limits_{k\neq j}(\al_j\!-\!\al_k)}
\fR_{\hb=0}\left\{e^{-\eta_j(Q)/\hb}\cZ_{ji}^*(\hb,Q)\right\};\\
\cB_i(Q)&=\frac{\lr{\a}\al_i^l}{24} \fR_{\hb=0,\i,-\a\al_i}
\left\{\frac{\prod\limits_{k=1}^n(\al_i\!-\!\al_k\!+\!\hb)}{\hb^3\prod\limits_{r=1}^l(a_r\al_i\!+\!\hb)}
\frac{\cZ_i^*(\hb,Q)}{1\!+\!\cZ_i^*(\hb,Q)}\right\};\\
\ti\cB_{ij}(Q)&=-\frac{1}{\prod\limits_{k\neq j}(\al_j\!-\!\al_k)}
\frac{\lr{\a}\al_j^l}{24} \fR_{\hb=0,\i,-\a\al_j}\left\{
\frac{\prod\limits_{k=1}^n(\al_j\!-\!\al_k\!+\!\hb)}
{\hb^2\prod\limits_{r=1}^l(a_r\al_j\!+\!\hb)}
\frac{\cZ_{ji}^*(\hb,Q)}{1\!+\!\cZ_j^*(\hb,Q)}\right\}.
\end{split}\end{equation*}
\end{prp}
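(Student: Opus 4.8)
The plan is to apply the Atiyah--Bott Localization Theorem~\cite{ABo} to the last integral in~\eqref{Frestr_e}, following~\cite[Section~2]{bcov1} almost verbatim and recording only the changes forced by replacing a single $\O_{\P^{n-1}}(n)$ by the bundle $\bigoplus_{r=1}^l\O_{\P^{n-1}}(a_r)$. First I would recall from~\cite[Sections~1.3,~1.4]{bcov1} that the $\T$-fixed loci of $\wt\M_{1,1}^0(\P^{n-1},d)$ are indexed by the decorated graphs of Figures~\ref{Afig} and~\ref{Bfig}; since $\phi_i|_{P_j}=\de_{ij}$ by~\eqref{phiprop_e}, only the graphs whose marked-point vertex carries the label~$i$ --- those of types $A_i$, $\ti A_{ij}$, $B_i$, $\ti B_{ij}$ --- contribute to~\eqref{Frestr_e}. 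For the fixed locus $\cZ_\Ga$ attached to a graph $\Ga$, localization presents its contribution as $\int_{\cZ_\Ga}\big(\E(\V_1)\ev_1^*\phi_i\big)\big/\E\big(N\cZ_\Ga\big)$, and both $\E(\V_1)|_{\cZ_\Ga}$ and $\E(N\cZ_\Ga)$ factor into vertex, edge, and flag contributions indexed by~$\Ga$. The only structural difference from~\cite{bcov1} is that, because $\V_1$ is a direct sum over $r\!\in\![l]$, each vertex and edge factor of $\E(\V_1)$ is itself a product over~$r$; this is precisely what produces the products $\prod_{r=1}^l(a_r\al_i\!+\!\hb)$ and the coefficient $\lr\a\al_i^l=\prod_{r=1}^l(a_r\al_i)$ in the formulas for $\cB_i$ and $\ti\cB_{ij}$, and the residue locus $\hb=-\a\al_i$ (the $l$ points $-a_r\al_i$) in place of the single point $-n\al_i$.

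Next I would perform the graph surgery of~\cite[Section~2]{bcov1}: cut every contributing graph at its distinguished vertex and add a marked point to each resulting strand, so that $\Ga$ is reconstructed from its strands. A one-pointed strand with vertex label $j$ contributes a summand of the genus~$0$ series $\cZ_j^*$ of~\eqref{Z1ptdfn_e}; the single two-pointed strand of an $A_i$-graph contributes to $\wt\cZ_{ii}^*$ of~\eqref{Z2ptdfn_e}; for a $\ti A_{ij}$-graph the two two-pointed strands contribute to $\wt\cZ_{jj}^*$ and $\cZ_{ji}^*$; and for a $\ti B_{ij}$-graph the lone two-pointed strand contributes to $\cZ_{ji}^*$ of~\eqref{Z1pt2dfn_e}. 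The distinguished vertex itself contributes a ``vertex factor'': for a $B$-graph this is an integral over $\ov\M_{1,k}$ of $\psi$- and $\la$-classes produced by the contracted genus-$1$ principal component together with the restriction of $\E(\V_1)$ to it, and the identity $\int_{\ov\M_{1,1}}\la_1=\tfrac1{24}$ is the source of the overall $\tfrac1{24}$; for an $A$-graph the distinguished vertex lies on the loop and produces the double-$\hb$ structure of $\cA_i$. The residue locations $\hb=0,\i,-\a\al_i$ in $\cB_i$ and $\ti\cB_{ij}$ are exactly the poles --- at the node-smoothing parameter, at the constant-map normalization, and at the zeros of $\prod_r(a_r\al_i\!+\!\hb)$ coming from the $r$-th summand of $\E(\V_1)$ --- of the rational one-form in $\hb$ attached to the distinguished vertex, and the vanishing of the sum of all residues of such a form on $S^2$ is what lets one trade the sum over branch-configurations for a residue.

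I would then invoke the self-reproducing property of $\cZ_i^*$ from~\cite[Section~2.2]{bcov1}: summing over all arrangements of one-pointed strands attached at a vertex of label $j$ turns the naive sum into the factor $\ne^{-\eta_j(Q)/\hb}\big(1+\cZ_j^*(\hb,Q)\big)$, which by~\cite[Lemma~2.3]{bcov1} is holomorphic at $\hb=0$ with value $\Phi_0(\al_j,Q)$; this is exactly where the series $\eta_j$ of~\eqref{etadfn_e} and $\Phi_0$ of~\eqref{Phi0dfn_e} enter, and it yields the prefactors $\ne^{-\eta_i/\hb_1}\ne^{-\eta_i/\hb_2}$ and $\ne^{-\eta_j/\hb}$ together with the denominators $\Phi_0(\al_i,Q)$ and $\prod_{k\neq j}(\al_j\!-\!\al_k)$. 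Assembling, for each of the four graph types, the vertex factor, the two-pointed-strand factor, and the summed one-pointed-strand factors gives $\cA_i$, $\ti\cA_{ij}$, $\cB_i$, and $\ti\cB_{ij}$ respectively, and summing the four types gives~\eqref{equivred_e}. Since the combinatorics, the localization input, and the strand resummation are exactly as in~\cite[Section~2]{bcov1}, the write-up should largely consist of citing~\cite{bcov1} and inserting the $r$-indexed modifications above.

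The step I expect to be the main obstacle is verifying that the vertex, edge, and flag factors of $\E(\V_1)$ and of the normal bundle assemble, after the distinguished-vertex cut, into precisely the integrands displayed in Proposition~\ref{equivred_prp} --- with the correct powers $\hb^{-3}$ in $\cB_i$ and $\hb^{-2}$ in $\ti\cB_{ij}$, the correct residue locus $\hb=-\a\al_i$, and the correct $\prod_{r=1}^l$-products. In particular one must check that the $H^1$-contributions of the $\O_{\P^{n-1}}(a_r)$ on the contracted genus-$1$ component are correctly absorbed by the desingularization $\wt\M_{1,1}^0(\P^{n-1},d)$ of~\cite{VaZ} (this is the content of the vector bundle $\V_1$) and feed into the genus-$1$ vertex factor only through $\la_1$, so that after integration over $\ov\M_{1,1}$ the factor reduces to $\tfrac{\lr\a\al_i^l}{24}$ times the expected rational expression in~$\hb$. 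Everything else transfers from~\cite{bcov1} without essential change.
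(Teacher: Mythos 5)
Your proposal matches the paper's own treatment: the paper likewise proves Proposition~\ref{equivred_prp} by citing the localization computation of \cite[Sections~1.3,~1.4,~2]{bcov1} for the $l=1$ case and recording only the changes to the Euler classes of $\V_0'$ and $\V_1$ --- namely that $n\al_i+\hb$ becomes $\prod_{r=1}^l(a_r\al_i+\hb)$, that $n\al_i$ becomes $\lr{\a}\al_i^l$, and that the residue locus $-n\al_i$ becomes $-\a\al_i$ --- exactly as you describe. Your account of the graph types, the cut at the distinguished vertex, the strand contributions to $\cZ_j^*$, $\cZ_{ji}^*$, $\wt\cZ_{ii}^*$, and the resummation via $\ne^{-\eta_j/\hb}(1+\cZ_j^*)$ is the same argument, so the proposal is correct and takes essentially the same route.
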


This proposition is essentially proved in \cite[Sections~1.3,~1.4,~2]{bcov1},
which treats the $l\!=\!1$ case.
In the general case, the $\T$-fixed loci and their normal bundles 
remain the same.
The only required changes involve the Euler classes of
the bundles $\V_0'$ and $\V_1$, which are now products of
the Euler classes of the bundles in~\cite{bcov1}.
These changes are:
\begin{enumerate}
\item equation \cite[(1.37)]{bcov1} becomes
$$\E(\V_1)|_{\wt\cZ_{\Ga}}
=\lr{\a}\al_{\mu(v_0)}^l\prod_{e\in\Edg(v_0)}\!\!\!\!\!\!\!\pi_e^*\E(\V_0')
\Big/\prod\limits_{r=1}^l\big(a_r\al_{\mu(v_0)}\!+\!\psi_{\Ga}\!+\!\la\big);$$
\item $n\al_i+\hb$ is replaced by $\prod\limits_{r=1}^l(a_r\al_i+\hb)$ 
in the definition of $\Psi$ above \cite[(2.19)]{bcov1},
in \cite[(2.23) and (2.24)]{bcov1},  and 
in the last equation in \cite[Section~2.3]{bcov1},
leading to the corresponding modification in the final expressions for~$\cB_i$
and~$\ti\cB_{ij}$ above;
\item $n\al_i$ is replaced by $\lr{\a}\al_i^l$ in \cite[(2.24)]{bcov1} 
and $n\al_j$ is replaced by $\lr{\a}\al_j^l$
in the last equation in \cite[Section 2.3]{bcov1},
leading to the corresponding modification in the final expressions for~$\cB_i$
and~$\ti\cB_{ij}$ above.
\end{enumerate}

\section{Some properties of hypergeometric series $\ti\F$}
\label{hg_sec}

In this section we study properties of the hypergeometric series $\ti\F$
of \e_ref{F0def} which are  used in Section~\ref{mainthmpf_subs} to 
deduce Theorem~\ref{main_thm} from Proposition~\ref{equivred_prp}.
The results in this section extend most of the statements 
and proofs in~\cite{ZaZ}, which treats the $l\!=\!1$ case.

Let $\bM\!:\cP\!\lra\!\cP$ be as in \e_ref{Mdfn} and define $\F\!\in\!\cP$ by
\begin{equation}\label{F0def_e} 
\F(w,q)\equiv \sum_{d=0}^{\i}q^d
\frac{\prod\limits_{k=1}^l\prod\limits_{r=1}^{a_kd}(a_kw\!+\!r)}
{\prod\limits_{r=1}^d\left[(w\!+\!r)^n-w^n\right]}\,.
\end{equation}
By \e_ref{F0def} and \e_ref{Idfn}
\begin{equation}\label{Iprp_e}
I_p(q)\equiv \bM^p\ti\F(0,q) =\bM^p\F(0,q)
\qquad\forall\,p=0,1,\ldots,n\!-\!1.
\end{equation}
Some advantages of the power series $\F$ over $\ti\F$ are illustrated by 
Lemmas~\ref{HGper_lmm} and~\ref{reg_lmm} below.

\begin{lmm}\label{HGper_lmm} 
The hypergeometric series $\F$ satisfies $\bM^n\F=\F$. 
\end{lmm}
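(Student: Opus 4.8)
The plan is to exhibit $\F$ as (essentially) the restriction to a single fixed point of an equivariant genus-zero hypergeometric series whose ``twisted'' structure is governed by a cyclic group action, so that applying $\bM$ corresponds to rotating through the $n$ residues and $\bM^n$ returns to the start. Concretely, I would first rewrite $\F$ in a form that makes the symmetry manifest. Recall that for the genus-zero mirror computation on $\P^{n-1}$ one works with the two-variable series
\[
Y(w,q)\;\equiv\;\sum_{d=0}^{\infty}q^d\,\frac{\prod_{k=1}^l\prod_{r=1}^{a_kd}(a_kw+r)}{\prod_{r=1}^d\bigl[(w+r)^n-w^n\bigr]},
\]
which is exactly $\F$, and observe that the denominator factor $(w+r)^n-w^n=\prod_{j=0}^{n-1}\bigl(w+r-\zeta^j w\bigr)$ with $\zeta\equiv e^{2\pi\I/n}$ — the $n$ linear factors being precisely the weights $x-\al_k$ evaluated at the $n$ fixed points after the standard identification. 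This is the source of the $\Z/n$ symmetry: the denominator is the product over the $n$ ``sectors'' of the naive one-dimensional denominator $\prod_r(w+r)$, twisted by the $n$-th roots of unity.

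Next I would recall the precise mechanism by which $\bM$ interacts with this structure. The operator $\bM F=\{1+\tfrac{q}{w}\tfrac{d}{dq}\}\tfrac{F}{F(0,q)}$ is the standard normalize-and-differentiate operator appearing in Givental-style mirror symmetry; iterating it $p$ times produces $I_p(q)=\bM^p\F(0,q)$, and the key classical fact (proved in \cite{ZaZ} for $l=1$, to be extended here) is that $\bM^p\F$ again lies in $\cP$ and has a controlled pole/zero structure in $w$. The claim $\bM^n\F=\F$ should be proved by establishing a closed recursion: show that $\bM\F$ can be written as $\F$ with its ``$w$-argument shifted by one sector'', i.e. there is an identity of the shape $(\bM\F)(w,q)=\F(w,q)\big|_{\text{sector }\to\text{ next}}$, so that after $n$ applications one has cycled through all $n$ sectors and, because the denominator $(w+r)^n-w^n$ is invariant under the full cyclic group, one returns to $\F$ itself. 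Equivalently — and this is probably the cleanest route — one verifies that the $n$ series $\F,\bM\F,\dots,\bM^{n-1}\F$ together with the relation $\bM^n\F=\F$ are characterized as the unique solution in $\cP$ of the order-$n$ Picard–Fuchs type ODE in $q$ satisfied by $I_0$, whose indicial/monodromy data around $q=0$ and around the conifold point $q=1/\a^{\a}$ forces periodicity of the $\bM$-orbit with period exactly $n$. The hypergeometric recursion relating the degree-$d$ and degree-$(d-1)$ coefficients of $\F$ gives this ODE directly, and the fact that it is the \emph{same} ODE after applying $\bM$ (up to the normalization that $\bM$ divides out) is what closes the loop.

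I would carry this out in the following order. (1) Derive the explicit three-term-free recursion on the coefficients $\F_d(w)$ of $\F$ from \eqref{F0def_e}, and package it as a linear $q$-difference/differential operator $\mathcal{L}$ annihilating $\F$. (2) Show $\bM$ sends solutions of $\mathcal{L}$ in $\cP$ to solutions of a ``shifted'' operator $\mathcal{L}'$, and identify the shift as the $\Z/n$ rotation described above; here the invariance of $(w+r)^n-w^n$ under $w\mapsto \zeta w$ is the crucial input, which is exactly why $\F$ is used in place of $\ti\F$ (the latter's denominator $(w+r)^n$ lacks this symmetry). (3) Iterate: after $n$ steps the operator returns to $\mathcal{L}$, so $\bM^n\F$ is a solution of $\mathcal{L}$ in $\cP$ with the same constant term normalization as $\F$, and (4) invoke uniqueness of such a solution — $\mathcal{L}$ has a one-dimensional space of solutions in $1+q\Q(w)[[q]]$ with prescribed value at $q=0$ — to conclude $\bM^n\F=\F$. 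The main obstacle I anticipate is step (2): making precise the sense in which ``$\bM$ = rotate one sector'' requires carefully tracking how the normalization factor $F(0,q)$ and the operator $\tfrac{q}{w}\tfrac{d}{dq}$ act on the sector decomposition of the denominator, and checking that no spurious poles in $w$ are introduced (i.e. that $\bM\F$ genuinely stays in $\cP$, holomorphic at $w=0$) — this is where the extension of \cite{ZaZ} beyond $l=1$ needs the hypothesis $a_k\ge 1$ and the genuine combinatorics of the numerator $\prod_k\prod_r(a_kw+r)$ enters, so I would prove the requisite regularity as a preliminary lemma (the analogue of the ``reg'' lemma advertised after \eqref{Iprp_e}) before attempting the periodicity statement.
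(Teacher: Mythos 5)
Your proposal has a genuine gap at its central step. The ``$\Z/n$ sector rotation'' mechanism --- the claim that $\bM$ acts on $\F$ by shifting one factor of the decomposition $(w+r)^n-w^n=\prod_{j=0}^{n-1}\bigl(r+(1-\zeta^j)w\bigr)$ (with $\zeta$ a primitive $n$-th root of unity) and that $\bM^n$ therefore cycles back to $\F$ --- cannot be made precise and is inconsistent with basic properties of $\bM$: every factor $r+(1-\zeta^j)w$ equals $r$ at $w=0$, so any genuine permutation of these factors would fix the value of the series at $w=0$, whereas $\bM\F(0,q)=I_1(q)\neq I_0(q)=\F(0,q)$ in general (already for the quintic, $I_0=1+120q+\cdots$ while $I_1=1+770q+\cdots$). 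Consequently your step (3), that ``after $n$ steps the operator returns to $\mathcal L$,'' is also false as stated: the transfer lemma from \cite{ZaZ} (Lemma~\ref{ode_cor}) shows that each application of $\bM$ \emph{lowers the order} of the ODE by one while dividing the inhomogeneous term $w^n\F$ by $w$, so the operator never returns to $\mathcal L$; it degenerates monotonically until it reaches order zero. That degeneration is the real mechanism, and at order zero it yields only a \emph{proportionality} between $\bM^{n-l}\F$ and an auxiliary series, with an a priori unknown scalar factor $C(q)$; no uniqueness argument for solutions of $\mathcal L$ in $1+q\Q(w)[[q]]$ can upgrade this to $\bM^n\F=\F$ without actually computing that scalar.

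The paper closes both gaps with two devices your outline lacks. First, it introduces the auxiliary series $\F_{-l}$ of (\ref{Ifuncdfn_e1a}), whose transforms $\F_{-l},\dots,\F_{-1}$ all have constant term $1$ at $w=0$; evaluating the degenerate order-zero relation $\bigl(1-\a^{\a}q\bigr)\bigl(\prod_{r=0}^{n-1-l}I_r\bigr)\F_{n-l}=\F_{-l}$ at $w=0$ then extracts the product formula (\ref{HG1e1}), whence $\F_{n-l}=I_{n-l}\F_{-l}$; since $\bM$ is insensitive to scalar prefactors, this gives $\F_{n-l+1}=\F_{-l+1}$, and $l-1$ further applications of $\bM$ (using the normalization of the negatively indexed series) give $\bM^n\F=\F$. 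Second, the scalar $C(q)$ is pinned down by an explicit induction on the top coefficient of the descending family of ODEs, formula (\ref{coeff1}); this is a concrete computation, not a symmetry argument. Your instincts that the Picard--Fuchs-type ODE for $\F$, the regularity of the $\bM^p\F$, and the special role of the denominator $(w+r)^n-w^n$ (versus $(w+r)^n$ for $\ti\F$) are the right inputs are all sound, and the preliminary regularity statement you propose is indeed Lemma~\ref{reg_lmm}; but the periodicity does not come from a cyclic symmetry of the operator, and without the order-reduction bookkeeping and the normalization trick via $\F_{-l}$ the argument does not close.
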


\begin{lmm}[{\cite[Lemma 1.3 and its proof]{ZaZ}}]\label{reg_lmm} 
If $F\!\in\!\cP$ and $\bM^kF=F$ for some $k>0$,  then every coefficient 
of the power series $\,\log \bM^pF(w,q)\in\Q(w)\big[\big[q\big]\big]$ is 
$\O(w)$ as $w\to\i\,$ for all $p\geq 0$. 
Moreover, $\fR_{w=0}\left\{\log\bM^pF(w^{-1},q)\right\}$ does not depend on $p$. 
\end{lmm}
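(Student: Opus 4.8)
The plan is to pass to the logarithms $g_p\equiv\log\bM^pF$ and exploit the recursion that $\bM$ induces on them. Each $\bM^pF$ lies in $\cP$, so it has constant term $1$ in $q$ and its $q$-coefficients are rational in $w$ and regular at $w=0$; hence $g_p\in\Q(w)\big[\big[q\big]\big]$ is well defined, its $q$-coefficients are of the same type, and it has no constant term in $q$. Writing $Rh\equiv h-h|_{w=0}$ and $D\equiv q\frac{\nd}{\nd q}$, and denoting by $[h]_d$ the coefficient of $q^d$ in a power series $h$, a direct computation from \eqref{Mdfn} gives $\bM F=\frac{F}{F(0,q)}\big(1+\frac1w D\log\frac{F}{F(0,q)}\big)$; since $\log\frac{F}{F(0,q)}=R(\log F)$, applying this with $\bM^pF$ in place of $F$ yields the recursion
\[
g_{p+1}=Rg_p+\log\Big(1+\tfrac1w\,D\,Rg_p\Big),
\]
whose second logarithm makes sense because $Rg_p$ vanishes at $w=0$ and has positive $q$-valuation. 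The hypothesis $\bM^kF=F$ says that $(g_p)_{p\ge0}$ is periodic of period $k$.

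Call $g_p$ \emph{regular} if each $[g_p]_d$ is $\O(w)$ as $w\to\i$. First I would check that regularity propagates forward: if $g_p$ is regular then so is $Rg_p$, hence so is $DRg_p$ (which rescales the coefficients by integers), so $\frac1w DRg_p$ has all $q$-coefficients $\O(1)$, and then so does $\log(1+\frac1w DRg_p)$, because its $q^d$-coefficient is a fixed polynomial in finitely many of those $\O(1)$ quantities; the recursion then shows $g_{p+1}$ is regular. Combined with periodicity, it follows that if one $g_p$ is regular then all are, so it suffices to prove regularity of a single $g_p$.

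Suppose no $g_p$ is regular. Let $d^*\ge1$ be the smallest $d$ with $[g_p]_d$ not $\O(w)$ for some $p$, and, after a cyclic relabelling, assume $c\equiv\deg_\i[g_0]_{d^*}$ is the maximum of $\deg_\i[g_p]_{d^*}$ over $p$; by hypothesis $c\ge2$. Minimality of $d^*$ makes $\frac iw[Rg_p]_i$ an $\O(1)$ function for every $p$ and $i<d^*$, so in the recursion the only contribution to $[g_{p+1}]_{d^*}$ beyond an $\O(1)$ error comes from the linear term of the logarithm, giving $[g_{p+1}]_{d^*}=\frac{w+d^*}{w}[Rg_p]_{d^*}+\O(1)$. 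Expanding at $w=\i$ as $[g_p]_{d^*}=\gamma_pw^c+\beta_pw^{c-1}+\cdots$ with $\gamma_0\ne0$, and noting that for $c\ge2$ neither $R$ nor the $\O(1)$ error affects the $w^c$ or $w^{c-1}$ coefficients while multiplication by $1+\frac{d^*}{w}$ sends $(\gamma_p,\beta_p)$ to $(\gamma_p,\beta_p+d^*\gamma_p)$, I get $\gamma_{p+1}=\gamma_p$ and $\beta_{p+1}=\beta_p+d^*\gamma_p$. Hence $\gamma_p\equiv\gamma_0\ne0$ and $\beta_p=\beta_0+p\,d^*\gamma_0$ is unbounded, contradicting the periodicity $\beta_{p+k}=\beta_p$. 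So every $g_p$ is regular, which is the first assertion. I expect this last argument to be the main obstacle: one must show that a hypothetical failure of regularity necessarily occurs with $\deg_\i\ge2$ and then that its subleading coefficient at $w=\i$ drifts linearly in $p$, which periodicity forbids; everything else is careful bookkeeping of which terms of the recursion feed which Laurent coefficients at $w=\i$.

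For the last assertion I would use the first one: write $[g_p]_d=a_d^{(p)}w+\O(1)$ at $w=\i$, so that $\fR_{w=0}\{[g_p]_d(w^{-1})\}=a_d^{(p)}$. Regularity makes all $q$-coefficients of $\frac1w DRg_p$ be $\O(1)$, so $\log(1+\frac1w DRg_p)$ contributes nothing to the $w^1$-coefficient of any $[g_{p+1}]_d$; since $R$ preserves $w^1$-coefficients, the recursion gives $a_d^{(p+1)}=a_d^{(p)}$ for all $p$ and $d$. Therefore $\fR_{w=0}\{\log\bM^pF(w^{-1},q)\}$ does not depend on $p$.
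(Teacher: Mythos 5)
The paper does not prove this lemma itself but attributes it to \cite{ZaZ} (Lemma 1.3 and its proof); your argument is correct and reproduces essentially that proof: the recursion $g_{p+1}=Rg_p+\log\bigl(1+w^{-1}DRg_p\bigr)$, the forward propagation of regularity, and the contradiction obtained from the linear drift $\beta_{p+1}=\beta_p+d^*\gamma_p$ of the subleading coefficient at $w=\infty$ against periodicity are exactly the ingredients of the cited argument. The residue statement likewise follows, as in \cite{ZaZ}, from the invariance of the $w^1$-coefficient under one application of $\bM$.
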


Applying this lemma to $F=\F$, we find that $\F(w,q)$ has  an asymptotic expansion 
\BE{asym} \F(w,q)\;\sim\;\ne^{\mu(q)w}\sum_{s=0}^{\i}\Phi_s(q)\,w^{-s} 
\qquad(w\to\i) \EE
for some power series 
$\mu,\Phi_0,\Phi_1,\dots$ in $\Q\big[\big[q\big]\big]$.
Let 
\BE{Ldfn_e0} L(q)\equiv(1-\a^{\a}q)^{-1/n}\in\Q\big[\big[q\big]\big].\EE

\begin{prp}\label{HG_thm2}  
The power series $\mu$, $\Phi_0$, and $\Phi_1$ in~$\e_ref{asym}$ are given~by
\begin{gather}\label{muPhi0_e} 
\mu(q)=\int_0^q\!\frac{L(u)-1}u\,\nd u\,, \qquad 
\Phi_0=L^{\frac{l+1}{2}}\,, \\
\label{Phi1_e}\begin{split}
\Phi_1=\left\{ \left[\frac{n\!-\!1}{24}-\frac{1}{12}\sum_{r=1}^l\frac{1}{a_r}
\right](L\!-\!1)
-\frac{(n\!-\!2)(n\!+\!1)-3(l^2\!-\!1)}{24n}(L^n\!-\!1)\right\}\frac{\Phi_0}{L}\,.&
\end{split}
\end{gather}
\end{prp}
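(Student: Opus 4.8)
The plan is to extract $\mu$, $\Phi_0$, and $\Phi_1$ from the asymptotic expansion \e_ref{asym} by feeding it into the functional equation $\bM^n\F=\F$ of Lemma~\ref{HGper_lmm}. First I would analyze how the operator $\bM$ acts on a formal symbol of the form $\ne^{f(q)w}\sum_{s\ge0}g_s(q)w^{-s}$. Since $\bM F(w,q)=\{1+\tfrac{q}{w}\tfrac{\nd}{\nd q}\}\tfrac{F(w,q)}{F(0,q)}$ and the $w\to\i$ asymptotics of $\F(0,q)$ is just $\Phi_0(q)$ (the $s=0$ term does not see $\ne^{\mu w}$ once we set $w=0$ formally — more precisely one uses that $I_0(q)=\F(0,q)$ and tracks the leading behavior), applying $\bM$ multiplies the exponential rate by nothing at leading order but shifts it: a short computation shows $\bM$ sends asymptotic rate $f(q)$ to $f(q)+q f'(q)\cdot(\text{something})$; carefully, the key identity is that if $\F\sim\ne^{\mu w}(\Phi_0+\Phi_1 w\iv+\dots)$ then $\bM\F\sim\ne^{\mu_1 w}(\Phi_{0}^{(1)}+\dots)$ with the new rate determined by the $w^0$ and $w\iv$ coefficients. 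Iterating $n$ times and imposing $\bM^n\F=\F$ gives recursions pinning down $\mu$ and the $\Phi_s$.

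The cleaner route, which I would actually carry out, is: (i) determine $\mu$ directly from the hypergeometric/Picard–Fuchs structure. The series $\F$ (like $\ti\F$) satisfies a linear ODE in $q$ whose characteristic behavior at the singular point $q=(\a^\a)\iv$ governs $L(q)=(1-\a^\a q)^{-1/n}$; matching the exponential rate of the $w\to\i$ asymptotics against the symbol of this ODE yields $\mu'(q)=(L(q)-1)/q$, hence the stated integral formula for $\mu$ with $\mu(0)=0$. Concretely, the denominator $\prod_r[(w+r)^n-w^n]$ in \e_ref{F0def_e} has leading term $\prod_r(nrw^{n-1}+\dots)$, and the ratio of consecutive coefficients of $\F$ in $q$ behaves like $\a^\a q$ as $d\to\i$ at $w$ fixed; the WKB/Laplace-method analysis of this recursion in the regime $w\to\i$ produces exactly $\ne^{\mu(q)w}$ with the claimed $\mu$. (ii) Having $\mu$, substitute $\F=\ne^{\mu(q)w}(\Phi_0+\Phi_1 w\iv+O(w^{-2}))$ into $\bM^n\F=\F$ and read off the $w^1$ (or the leading-order, after the $\ne^{\mu w}$ is divided out) term to get a first-order linear ODE for $\Phi_0$; its solution with $\Phi_0(0)=1$ is $\Phi_0=L^{(l+1)/2}$. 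The exponent $(l+1)/2$ will come from carefully counting the contributions of the $l$ factors $\prod_{r=1}^{a_k d}(a_kw+r)$ in the numerator against the $n$-th power in the denominator — this is where the dependence on $l$ (rather than just $n$) enters, extending the $l=1$ computation of \cite{ZaZ}.

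(iii) For $\Phi_1$, I would push the expansion in $\bM^n\F=\F$ one order further, to the $w^0$ term, obtaining an inhomogeneous first-order linear ODE for $\Phi_1/\Phi_0$ (equivalently for $\Phi_1/\Phi_0\cdot L$) whose forcing term involves $\mu''$, $L$, $L^n$, and the constants $\sum_r a_r\iv$, $n$, $l$. Solving it with the normalization $\Phi_1(0)=0$ (which holds since $\F(w,0)=1$) and simplifying via $1-\a^\a q=L^{-n}$, $q\tfrac{\nd}{\nd q}L=\tfrac1n(L^{n+1}-L)$, and $\a^\a q\tfrac{\nd\mu}{\nd q}=\dots$ should collapse the answer into the stated combination $\{[\tfrac{n-1}{24}-\tfrac1{12}\sum_r\tfrac1{a_r}](L-1)-\tfrac{(n-2)(n+1)-3(l^2-1)}{24n}(L^n-1)\}\Phi_0/L$. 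Two sanity checks I would include: the $l=1$ specialization must reproduce \cite[Theorem~2]{ZaZ}'s $\Phi_1$, and both sides should vanish at $q=0$.

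The main obstacle I expect is step (iii): correctly bookkeeping the order-$w^0$ terms through $n$ applications of $\bM$ — each application contributes a $\tfrac{q}{w}\tfrac{\nd}{\nd q}$ piece that mixes $\mu$, $\Phi_0$, $\Phi_1$, and the normalizing division by $F(0,q)$ contributes its own $\Phi_0,\Phi_1$ corrections — and then recognizing that the resulting rational expression in $L$ simplifies to the clean closed form claimed. Getting the numerical constants (the $\tfrac1{24}$, the $\tfrac1{12}$, and especially the $l^2-1$ term) right is delicate; I would verify them against Lemma~\ref{reg_lmm}, which forces each coefficient of $\log\bM^p\F$ to be $O(w)$ at $\i$ and forces $\fR_{w=0}\log\bM^p\F(w\iv,q)$ to be $p$-independent — this residue condition is exactly the kind of constraint that pins down $\Phi_1$ and doubles as an independent check on the formula.
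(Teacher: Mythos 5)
Your overall strategy --- expand $\F\sim\ne^{\mu w}\sum_s\Phi_s w^{-s}$, track how $\bM$ acts on such symbols, and impose the periodicity $\bM^n\F=\F$ --- is exactly how the paper pins down $\mu$ and $\Phi_0$ (Section~\ref{muPhi0_subs}). But as written your plan omits the ingredient that converts the periodicity relations into closed forms, namely Proposition~\ref{HG_thm1}. The $s=0$ level of the iteration gives only $(1+D\mu)^n=I_0I_1\cdots I_{n-1}$, and the $s=1$ level (after the unknown $\Phi_1$ cancels by periodicity) expresses $D\log\Phi_0$ through the products $H_p=L^p/(I_0\cdots I_{p-1})$; to conclude $1+D\mu=L$ and $\Phi_0=L^{(l+1)/2}$ you must invoke $I_p=1$ for $p>n-l$, $I_0\cdots I_{n-l}=(1-\a^{\a}q)^{-1}$, and $\prod_p I_p^{\,n-l-p}=(1-\a^{\a}q)^{-(n-l)/2}$, i.e.\ \e_ref{HG1e0}--\e_ref{HG1e2}, none of which appear in your outline. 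Your alternative ``WKB'' derivation of $\mu$ from the singular point of the Picard--Fuchs operator is, as stated, only a heuristic; its rigorous version is the leading-order-in-$w$ term of the conjugated ODE discussed below, and that route would in fact give $\mu$ without Proposition~\ref{HG_thm1}.

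The genuine gap is in your step (iii). Pushing the $\bM^n\F=\F$ iteration to order $w^{-2}$ yields $\sum_{p=0}^{n-1}H_{p+1}^{-1}D\bigl(\Phi_{p,1}/I_p\bigr)=0$, and since $\Phi_{p,1}$ contains $\frac{\Phi_0}{L}\sum_{r\le p}\frac{DH_r}{H_r}$, summing over $p$ produces quadratically weighted sums such as $\sum_p\binom{n-p}{2}\frac{DI_p}{I_p}$ (and products of such sums). These are \emph{not} reducible to functions of $L$ by the identities of Proposition~\ref{HG_thm1}, which control only the total and the linearly weighted sums of the $\frac{DI_p}{I_p}$; so it is far from clear that your route terminates in the stated closed form. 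The paper instead conjugates the hypergeometric ODE $\{D_w^n-w^n-q\prod_k\prod_r(a_kD_w+r)\}\F=0$ by $\ne^{\mu(q)w}$ and expands the resulting operator in powers of $Lw$ (equations \e_ref{Lexp_e2}--\e_ref{Ldfn_e}), obtaining the first-order recursion $\fL_1(\Phi_s)+L^{-1}\fL_2(\Phi_{s-1})+\cdots=0$ of Proposition~\ref{Phithm}, whose coefficients involve only $L$ and the combinatorial constants $\xi_s$; the $s=1$ case then integrates directly to \e_ref{Phi1_e} using $DL=L(L^n-1)/n$ and $\Phi_1(0)=0$. You gesture at this Picard--Fuchs structure in step (i), but only for $\mu$; for $\Phi_1$ it is essential rather than merely convenient, and your proposal needs it (or an argument that the quadratic sums cancel) to close.
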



The last proposition of this section concerns properties of $\ti\F$ and $\F$
around $w\!=\!0$.

\begin{prp}\label{HG_thm1} 
The power series $I_p(q)$ defined by \e_ref{Idfn} satisfy
\begin{alignat}{1}
\label{HG1e0}  & I_{p}(q)=1 \qquad\forall~  p=n\!-\!l\!+\!1,\ldots,n\!-\!1,\\
\label{HG1e3} & I_p(q)=I_{n-l-p}(q)\qquad \forall~p=0,1,\ldots,n\!-\!l,\\
\label{HG1e1}  & I_0(q)\,I_1(q)\,\ldots\,I_{n-l}(q)=\big(1-\a^{\a}q\big)\iv,\\
\label{HG1e2}  &  I_0(q)^{n-l}I_1(q)^{n-l-1}\ldots I_{n-l-1}(q)^1 I_{n-l}(q)^0
                            =\big(1-\a^{\a}q\big)^{-(n-l)/2}\,.
\end{alignat}\end{prp}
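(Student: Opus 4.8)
\textbf{Proof proposal for Proposition~\ref{HG_thm1}.}
The plan is to derive all four identities from the single functional equation $\bM^n\F=\F$ of Lemma~\ref{HGper_lmm}, together with the two facts established in Lemma~\ref{reg_lmm}: that every coefficient of $\log\bM^p\F(w,q)$ is $\O(w)$ as $w\to\i$, and that $\fR_{w=0}\{\log\bM^p\F(w^{-1},q)\}$ is independent of~$p$. The key observation is that the operator $\bM$ acts on $\cP$ in a way that is well understood: $\bM F(w,q)=\{1+\tfrac{q}{w}\tfrac{\nd}{\nd q}\}\frac{F(w,q)}{F(0,q)}$, so $\bM F(0,q)=1$, and more generally iterating $\bM$ successively ``uses up'' the $w\to0$ behavior of~$\F$. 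Concretely, since $\F(w,q)=1+\O(w^0)\cdot q+\dots$ has a power series expansion in $w$ with coefficients in $\Q[[q]]$, one shows by induction that $\bM^p\F(w,q)$ still lies in $1+q\Q[[w]][[q]]$ and, crucially, that applying $\bM$ a total of $n$ times returns $\F$ by Lemma~\ref{HGper_lmm}. I would first pin down $I_p(q)=\bM^p\F(0,q)$ for $p$ in the ``stable range'' $p=n-l+1,\dots,n-1$: here the claim \e_ref{HG1e0} is that $I_p\equiv1$, which should follow by tracking how many times one can divide out by the value at $w=0$ before the hypergeometric numerator $\prod_k\prod_{r=1}^{a_kd}(a_kw+r)$ and denominator $\prod_r[(w+r)^n-w^n]$ force triviality; the denominator vanishes to order exactly $d$ at $w=0$ after the subtraction of $w^n$, which is the whole point of using $\F$ rather than $\ti\F$, and this ``extra'' vanishing is what makes the top $l-1$ applications of $\bM$ collapse.

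The symmetry \e_ref{HG1e3}, $I_p=I_{n-l-p}$, I would prove by combining \e_ref{HG1e0} with $\bM^n\F=\F$ in a ``palindrome'' argument: the sequence $I_0,I_1,\dots,I_{n-1}$ is determined by the orbit of $\F$ under $\bM$, it has period $n$, its last $l-1$ entries are $1$ by \e_ref{HG1e0}, and the structure of $\bM$ (which is, up to the normalization $F\mapsto F/F(0,\cdot)$, conjugate to the shift by the differential operator $1+\tfrac{q}{w}\tfrac{d}{dq}$) forces the reflection symmetry around the midpoint $p=(n-l)/2$. This is the standard duality for such hypergeometric $I$-functions; in the $l=1$ case it is exactly the content of \cite{ZaZ}, and I would follow that proof, keeping careful track of the shift by $l$ coming from the $l$ factors in the numerator. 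For \e_ref{HG1e1}, the product $I_0I_1\cdots I_{n-l}$, I would use the asymptotic expansion \e_ref{asym} and Proposition~\ref{HG_thm2}: taking the leading exponential/leading-coefficient data of $\bM^p\F$ as $w\to\i$ and telescoping the product of leading coefficients $\Phi_0^{(p)}(q)$ over a full period, the $\mu(q)w$ exponential factors cancel and what survives is governed by $L(q)=(1-\a^{\a}q)^{-1/n}$; since $\bM$ multiplies the leading exponential rate by a controlled factor at each step, the product over one period of length $n$ picks up exactly $L^{-n}=(1-\a^{\a}q)$, giving \e_ref{HG1e1}. Finally \e_ref{HG1e2} is a weighted version of the same computation: $\prod_{p=0}^{n-l}I_p^{\,n-l-p}$; here I would either sum the relation $\sum_p(n-l-p)\log I_p$ against the telescoped asymptotics, or — more cleanly — combine \e_ref{HG1e1} and the symmetry \e_ref{HG1e3} directly, since $\sum_{p=0}^{n-l}(n-l-p)=\binom{n-l+1}{2}$ and pairing $p\leftrightarrow n-l-p$ turns the weighted product into $\big(\prod_p I_p\big)^{(n-l)/2}=(1-\a^{\a}q)^{-(n-l)/2}$.

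The main obstacle I anticipate is \e_ref{HG1e0} — establishing that $I_{n-l+1}=\dots=I_{n-1}=1$ — because this is exactly the place where the complete-intersection case genuinely differs from the hypersurface case of \cite{ZaZ} and requires the $l-1$ ``extra'' trivial steps. One has to argue carefully that after $n-l$ applications of $\bM$ the series $\bM^{n-l}\F(w,q)$, evaluated at $w=0$, is already such that all further $\bM$-steps do nothing but renormalize a series that is already $1+\O(w)$ with trivial constant term; equivalently, that the ``regularized'' hypergeometric series degenerates once the power of $w$ in play exceeds the codimension $n-1-l$ of $X_\a$. Concretely I would show that $\bM^{n-l}\F(w,q)/\bM^{n-l}\F(0,q)$ is a polynomial in $w$ of degree $\le l-1$ with leading structure forcing each subsequent $\bM$ to return $1$; the bookkeeping of the orders of vanishing of numerator and denominator of \e_ref{F0def_e} at $w=0$, as propagated through the operator $\bM$, is the delicate part. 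Once \e_ref{HG1e0} is in hand, \e_ref{HG1e3} follows from periodicity and reflection as in \cite{ZaZ}, and \e_ref{HG1e1}--\e_ref{HG1e2} follow from Proposition~\ref{HG_thm2} by the telescoping argument above, so the rest should be routine.
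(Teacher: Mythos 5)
Your plan contains two genuine gaps. First, your route to \e_ref{HG1e0} rests on a false premise: the denominator $\prod_{r=1}^d\left[(w+r)^n-w^n\right]$ of $\F$ does \emph{not} vanish at $w=0$ (it equals $(d!)^n$ there), and the numerator of $\F$ starts at $r=1$, so nothing in $\F$ itself ``forces triviality'' after $n-l$ applications of $\bM$; in particular $\bM^{n-l}\F(w,q)/\bM^{n-l}\F(0,q)$ is not a polynomial in $w$ of degree $\le l-1$. The mechanism the paper actually uses is different: it introduces the auxiliary series $\F_{-l}$ of \e_ref{Ifuncdfn_e1a}, whose numerator $\prod_k\prod_{r=0}^{a_kd-1}(a_kw+r)$ carries an extra factor $w^l$, and applies the ODE-reduction Lemma~\ref{ode_cor} repeatedly to the equation \e_ref{HGODE_e1a} satisfied by $\F_{-l}$, arriving at the identity \e_ref{conseq1_e}, $(1-\a^{\a}q)\bigl(\prod_{r=0}^{n-1-l}I_r\bigr)\F_{n-l}=\F_{-l}$. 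This single identity simultaneously yields \e_ref{HG1e1} (set $w=0$), the relation $\F_{n-l}=I_{n-l}\F_{-l}$, hence \e_ref{HG1e0} (the next $l-1$ iterates of $\bM$ have constant term $1$ precisely because of the $w^l$ in $\F_{-l}$), and Lemma~\ref{HGper_lmm} itself. Without $\F_{-l}$ and Lemma~\ref{ode_cor} you have no handle on $\bM^{n-l}\F$.

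Second, your derivation of \e_ref{HG1e1} is circular. Telescoping the leading asymptotic coefficients over one period correctly gives $(1+D\mu)^n=I_0\cdots I_{n-1}$, but to conclude $I_0\cdots I_{n-l}=(1-\a^{\a}q)^{-1}$ you invoke $1+D\mu=L$ from Proposition~\ref{HG_thm2} --- and in the paper that formula for $\mu$ is \emph{deduced from} \e_ref{HG1e0} and \e_ref{HG1e1} via that very telescoping identity. You would need an independent determination of $\mu$, which you do not supply; the paper avoids this by reading \e_ref{HG1e1} directly off the $p=n-l$ case of the reduced ODE \e_ref{ode-p}. Your remaining points are acceptable in outline: \e_ref{HG1e2} does follow from \e_ref{HG1e3} and \e_ref{HG1e1} by the pairing $p\leftrightarrow n-l-p$ (the paper notes this, though it proves \e_ref{HG1e2} directly from the subleading ODE coefficient \e_ref{coeff2}, since \e_ref{HG1e3} is the hardest item), and for \e_ref{HG1e3} ``follow \cite{ZaZ}'' is the right instinct but not an argument --- the actual proof factors $w^{l-n}\ti\F$ as $I_0\,D_w\iv I_1\,D_w\iv\cdots I_{n-l}$ and runs an induction on the $q$-degree using the substitution $w\to-w-d$; mere periodicity together with \e_ref{HG1e0} does not force a palindrome.
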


While \e_ref{HG1e3} and \e_ref{HG1e1} imply \e_ref{HG1e2},
\e_ref{HG1e2} is simpler to prove directly than~\e_ref{HG1e3} 
and will be verified together with~\e_ref{HG1e1}, as is done in~\cite{ZaZ}.

Lemma~\ref{HGper_lmm} and Propositions~\ref{HG_thm2} and~\ref{HG_thm1} 
are proved in Sections~\ref{per_subs}-\ref{Phi1_subs} following
the approach in~\cite{ZaZ}.
Let  
$$D=q\,\frac \nd{\nd q}\,,D_w=D\!+\!w\!: 
\Q(w)\big[\big[q\big]\big]\lra \Q(w)\big[\big[q\big]\big].$$
Thus,
\begin{equation}\label{Dw_e}\begin{split}
D_w\left[\sum_{d=0}^{\i} c_d(w)q^d\right]&=
\sum_{d=0}^{\i} (w\!+\!d)c_d(w)q^d\,,\\
\bM F(w,q)&= w\iv D_w\bigl[F(w,q)/F(0,q)\bigr]
\qquad\forall\,F\!\in\!\cP.
\end{split}\end{equation}

\subsection{Proof of Lemma~\ref{HGper_lmm} and Proposition~\ref{HG_thm1}}  
\label{per_subs}

We will repeatedly use the following lemma.

\begin{lmm}[{\cite[Corollary 2.2]{ZaZ}}]\label{ode_cor}  
Suppose $F(w,q)\in\cP$ satisfies
\begin{equation}\label{ode_cor_e1} 
\biggl(\sum_{r=0}^mC_r(q)\,D_w^{\,r}\biggr)F(w,q)\=A(w,q) 
\end{equation}
for some power series $C_0(q),\,\dots,\,C_m(q)\in\Q\big[\big[q\big]\big]$ 
and $A(w,q)\in\Q(w)\big[\big[q\big]\big]$ with $A(0,q)\equiv0$.  
Then 
\begin{equation}
\label{ode_cor_e2}
 \biggl(\sum_{s=0}^{m-1}\ti C_s(q)\,D_w^{\,s}\biggr)\bM F(w,q)\=\frac1w\,A(w,q)\,, 
 \end{equation}
where $\ti C_s(q)\equiv\sum\limits_{r=s+1}^m\binom{r}{s+1}\,C_r(q)\,D^{r-1-s}F(0,q)\,$.   \end{lmm}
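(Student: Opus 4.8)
The plan is to derive \e_ref{ode_cor_e2} from the hypothesis \e_ref{ode_cor_e1} by a direct operator manipulation, after separating out the two operations built into the definition \e_ref{Mdfn} of $\bM$: division by $F(0,q)$ and multiplication by $w\iv$. I would write $f(q)\equiv F(0,q)$, which lies in $1+q\Q[[q]]$ because $F\!\in\!\cP$ is regular at $w\!=\!0$ with constant term $1$, hence is invertible, and $G(w,q)\equiv F(w,q)/f(q)\in\cP$, so that $\bM F(w,q)=w\iv D_wG(w,q)$ by \e_ref{Dw_e}. The first step is to record three elementary facts about $D_w\!=\!D\!+\!w$ acting on $\Q(w)[[q]]$: (i) since $D=q\frac{\nd}{\nd q}$ ignores $w$ and commutes with multiplication by $w$, $D_w$ commutes with multiplication by any rational function of $w$ alone, so in particular $D_w^{\,s}\bigl(w\iv\,\cdot\,\bigr)=w\iv D_w^{\,s}(\,\cdot\,)$; (ii) for $g=g(q)$ one has $D_w(gH)=g\,D_wH+(Dg)H$, hence by induction $D_w^{\,r}(gH)=\sum_{j=0}^r\binom rj(D^jg)\,D_w^{\,r-j}H$; (iii) $\bigl(D_w^{\,r}F\bigr)\big|_{w=0}=D^r\bigl(F(0,q)\bigr)$, which follows by setting $w\!=\!0$ in the binomial expansion $(D+w)^r=\sum_{k=0}^r\binom rk w^kD^{r-k}$ (legitimate because $D$ and multiplication by $w$ commute).

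Next I would use the hypothesis \e_ref{ode_cor_e1} in two ways. Evaluating it at $w\!=\!0$ and invoking (iii) together with $A(0,q)\equiv0$ gives the identity
\[
\sum_{r=0}^mC_r(q)\,D^r\!f(q)=0.
\]
On the other hand, writing $F=fG$ and expanding with (ii) turns \e_ref{ode_cor_e1} into $\sum_{s=0}^mB_s(q)\,D_w^{\,s}G(w,q)=A(w,q)$, where $B_s(q)\equiv\sum_{r=s}^m\binom rsC_r(q)\,D^{r-s}\!f(q)$. Comparing with the definition of $\ti C_s$ in the statement shows $B_s=\ti C_{s-1}$ for every $s\ge1$, while $B_0=\sum_{r=0}^mC_rD^r\!f$ vanishes by the displayed identity. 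Hence $\sum_{s=1}^m\ti C_{s-1}(q)\,D_w^{\,s}G(w,q)=A(w,q)$.

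Finally, using $\bM F=w\iv D_wG$ and fact (i),
\[
\sum_{s=0}^{m-1}\ti C_s(q)\,D_w^{\,s}\bM F=\sum_{s=0}^{m-1}\ti C_s\,D_w^{\,s}\bigl(w\iv D_wG\bigr)=w\iv\sum_{s=0}^{m-1}\ti C_s\,D_w^{\,s+1}G=w\iv\sum_{t=1}^m\ti C_{t-1}\,D_w^{\,t}G=\frac1w\,A(w,q),
\]
which is exactly \e_ref{ode_cor_e2}.

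The argument is essentially routine; the only steps needing care are the bookkeeping of facts (i)--(iii) and the reindexing identity $B_s=\ti C_{s-1}$, and the one genuinely load-bearing observation is that the $w\!=\!0$ evaluation of the hypothesis (which is where $A(0,q)\equiv0$ enters) is precisely what kills the otherwise-leftover $s\!=\!0$ term $B_0G$. I would expect that bookkeeping, rather than any real difficulty, to be the main obstacle.
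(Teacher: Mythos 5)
Your proof is correct. The paper itself gives no argument for this lemma (it simply cites \cite[Corollary 2.2]{ZaZ}), and your derivation --- writing $F=F(0,q)\cdot G$, applying the Leibniz rule for $D_w$ against a function of $q$ alone, and using the $w=0$ evaluation of the hypothesis to kill the $s=0$ term --- is essentially the standard argument given in that reference.
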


Define power series $\F_{-l},\F_{-l+1},\ldots\in\cP$ by 
\begin{equation}\label{Ifuncdfn_e1a}\begin{split} 
\F_{-l}(w,q)&\equiv  \sum_{d=0}^{\i}q^d 
\frac{\prod\limits_{k=1}^l\prod\limits_{r=0}^{a_kd-1}(a_kw\!+\!r)}
  {\prod\limits_{r=1}^d\left[(w\!+\!r)^n\!-\!w^n\right]}\,,\\
\F_p&\equiv\bM^{l+p}\F_{-l}\qquad \forall\, p>-l. 
\end{split}\end{equation}
Using \e_ref{Dw_e}, we find that 
\BE{Fpprop_e}\F_p(0,q)=1 ~~~\forall\,p=-l,-l\!+\!1,\ldots,-1, \quad 
w^{-l}D_w\F_{-1}=\F;\EE
thus, $\F_p=\bM^p\F$ for all $p\ge0$. 

It is also straightforward to check that $\F_{-l}$ solves the differential equation
\BE{HGODE_e1a} 
\left\{D_w^n\,-\,q\,\prod_{k=1}^l\,
\prod_{r=0}^{a_k-1}\left(a_kD_w+r\right)\right\}\,\F_{-l} \=w^n\,\F_{-l}\,.\EE
This equation is of the form \e_ref{ode_cor_e1} with $F=\F_{-l}$, 
$A=w^n\F_{-l}$, $m=n$, 
\begin{equation}\label{init_e} 
 C_n(q)=1-\a^{\a}q\,,\qquad C_{n-1}(q)=-\frac{n-l}{2}\a^{\a}q.
\end{equation}
Applying Lemma~\ref{ode_cor} repeatedly, we obtain
\BE{ode-p} 
\sum_{s=0}^{n-l-p}C_s^{(p)}(q)\,D_w^s\F_p(w,q)\=w^{n-l-p}\,\F_{-l}(w,q)\,,
\qquad -l\!\le p\le n\!-\!l, \EE
where by the first identity in \e_ref{Fpprop_e} and by \e_ref{Iprp_e}
\begin{equation*}\begin{split}
C_s^{(p)}(q)&=C_{s+l+p}^{(-l)}(q)  \qquad -l\le p\le0,\\ 
C_s^{(p)}(q)&=\sum_{r=s+1}^{n-l-p+1}
 \binom{r}{s\!+\!1}\,C_r^{(p-1)}(q)\,D^{r-s-1}I_{p-1}(q) \qquad p>0.
\end{split}\end{equation*}
Using \e_ref{init_e} and induction on $p$, we find that the top two coefficients 
in \e_ref{ode-p} are given~by
\begin{alignat}{1}
\label{coeff1} 
C^{(p)}_{n-l-p} &= \left(1-\a^{\a}q\right)\,\prod_{r=0}^{p-1}I_r(q)\,,\\
\label{coeff2} 
C^{(p)}_{n-l-p-1} &= \left[-\a^{\a}q\frac{n-l}{2}
\,+\,\left(1-\a^{\a}q\right)\,\sum_{r=0}^{p-1}(n\!-\!l\!-\!r)\,
\frac{DI_r(q)}{I_r(q)}\right]\,\prod_{r=0}^{p-1}I_r(q)\,.
\end{alignat}

Setting $p=n\!-\!l$ in \e_ref{ode-p} and \e_ref{coeff1} thus gives
\BE{conseq1_e} 
\big(1-\a^{\a}q\big)\,
\left(\prod_{r=0}^{n-1-l}I_r(q)\right)\F_{n-l}(w,q)\=\F_{-l}(w,q)\,.\EE
Setting $w=0$ in~\e_ref{conseq1_e} and using $\F_{-l}(0,q)=1$ gives~\e_ref{HG1e1}. 
Substituting~\e_ref{HG1e1}
back into \e_ref{conseq1_e} gives $\F_{n-l}/I_{n-l}=\F_{-l}$ and thus
$\F_{n-l+1}\!=\!\F_{-l+1}$.
Applying $\bM$ to both sides of the last identity $l\!-\!1$ times
and using~\e_ref{Fpprop_e}, we obtain Lemma~\ref{HGper_lmm} and equation~\e_ref{HG1e0}.
Similarly, setting $p=n\!-\!l\!-\!1$ in \e_ref{ode-p}-\e_ref{coeff2}  
and then taking $w=0$ gives 
$$ \sum_{r=0}^{n-1-l} (n\!-\!l\!-\!r)\,\frac{DI_r(q)}{I_r(q)}
=\frac{n\!-\!l}{2}\,\frac{\a^{\a}q}{1-\a^{\a}q}\,.$$
Integrating this identity and then exponentiating, we obtain \e_ref{HG1e2}.

We next prove the reflection symmetry~\e_ref{HG1e3}.
The function $\hF\!\in\!\cP$ defined in \e_ref{F0def}
satisfies the differential equation
$$\left\{D_w^{n-l}
-\lr{\a}q\prod_{k=1}^l\prod_{r=1}^{a_k-1}\bigl(a_kD_w+r\bigr)\right\}\,\hF
=w^{n-l}.$$
This equation is of the form \e_ref{ode_cor_e1} with $F=\ti\F$, 
$A=w^{n-l}$, $m=n\!-\!l$, and 
$$C_{n-l}(q)=1-\a^{\a}q.$$
Applying Lemma~\ref{ode_cor} repeatedly, we obtain
\BE{Fpde_e} \sum_{s=0}^{n-l-p}\ti{C}_s^{(p)}(q)D_w^s\bM^p\hF(w,q)
=w^{n-l-p}\,,\qquad 0\le p\le n\!-\!l,\EE
with $\ti{C}_{n-l-p}^{(p)}\!=\!C_{n-l-p}^{(p)}$ given by \e_ref{coeff1}.
Setting $p\!=\!n\!-\!l$ in \e_ref{Fpde_e}
and using~\e_ref{coeff1} and~\e_ref{HG1e1}, we find that 
$$\bM^{n-l}\hF(w,q)=I_{n-l}(q)$$ 
is independent of~$w$.
Using~\e_ref{Dw_e} and downward induction on~$p$, we then find that
\BE{hFexp_e} w^{l-n}\,\hF(w,q)
=I_0\,D_w\iv\,I_1\,D_w\iv\,\ldots I_{n-l-1}\,D_w\iv\, I_{n-l}, \EE
where
$$D_w^{-1}\left[\sum_{d=0}^{\i} c_d(w)q^d\right]=
\sum_{d=0}^{\i} \frac{c_d(w)}{(w\!+\!d)}q^d\,.$$
Comparing the coefficients of $q^d$ on the two sides of~\e_ref{hFexp_e}, 
we find that
$$\frac{\lr{\a}\iv
\prod\limits_{k=1}^l\prod\limits_{r=0}^{a_kd}(a_kw\!+\!r)}{[w(w\!+\!1)\ldots(w\!+\!d)]^n}
=\sum_{\begin{subarray}{c}d_0+\ldots+d_{n-l}=d\\ d_0,\ldots,d_{n-l}\ge0\end{subarray}}
\frac{c_0(d_0)\ldots c_{n-l}(d_{n-l})}
{(w\!+\!d_1\!+\!\ldots\!+\!d_{n-l})(w\!+\!d_2\!+\!\ldots\!+\!d_{n-l})\ldots(w\!+\!d_{n-l})}$$
for all $d\ge0$, where $c_p(d)$ is the coefficient of $q^d$ in $I_p(q)$. 
This identity is equivalent~to
\BE{Fcoeff_e}\sum_{p=0}^{n-l}\frac{c_p(d)}{w^{n-l-p}(w\!+\!d)^p}
=\frac{\prod\limits_{k=1}^l\prod\limits_{r=0}^{a_kd}(a_kw\!+\!r)}
     {\lr{\a}\prod\limits_{r=0}^d(w\!+\!r)^n} - 
\sum_{\begin{subarray}{c}d_0+\ldots+d_{n-l}=d\\ 0\le d_0,\ldots,d_{n-l}<d\end{subarray}}
\frac{c_0(d_0)\ldots c_{n-l}(d_{n-l})}
   {(w\!+\!d_1\!+\!\ldots\!+\!d_{n-l})\ldots(w\!+\!d_{n-l})}.\EE
We will use this identity to show by induction that 
\BE{Find_e} c_p(d)=c_{n-l-p}(d)  \qquad\forall~p=0,1,\ldots,n\!-\!l\,,\EE
thus establishing~\e_ref{HG1e3}.
Since $c_p(0)\!=\!I_p(0)\!=\!1$ for all $p$, \e_ref{Find_e} holds for $d\!=\!0$.
Suppose $d\!\ge\!1$ and \e_ref{Find_e} holds with $d$ replaced by every $d'\!<\!d$. 
The substitution $w\to-w\!-\!d$ acts by $(-1)^{n-l}$ on the first term 
on the right-hand side of~\e_ref{Fcoeff_e}.
It acts in the same way on the second term by the induction assumption; 
this can be seen by the renumbering 
$$\big(d_0,\ldots,d_{n-l}\big)\lra \big(d_{n-l},\ldots,d_0\big).$$
Thus, the substitution $w\to-w\!-\!d$ acts by $(-1)^{n-l}$ on the left-hand side
of~\e_ref{Fcoeff_e}, and so $c_p(d)=c_{n-l-p}(d)$ for all $0\le p\le n\!-\!l$,
as needed for  the inductive step.

\subsection{Proof of \e_ref{muPhi0_e}}
\label{muPhi0_subs}

By Lemmas~\ref{HGper_lmm} and~\ref{reg_lmm}, the functions $\F_p(w,q)\equiv\bM^p\F(w,q)$
admit asymptotic expansions
\BE{asym_p} 
\F_p(w,q)\sim\ne^{\mu(q)w}\sum_{s=0}^{\i}\Phi_{p,s}(q)\,w^{-s} \qquad(w\to\i), \EE
with the same function $\mu(q)$ in the exponent for all $p$.
Since $\F_{0}\!=\!\F$ and $\F_{p+1}\!=\!\bM \F_p$,
\BE{expind_e}   \Phi_{0,s}=\Phi_s\,, \qquad
\Phi_{p+1,s}=\frac{1+D\mu}{I_p}\,\Phi_{p,s}\,+\, 
\begin{cases} D\left(\frac{\Phi_{p,s-1}}{I_p}\right),&\hbox{if}~s\ge1,\\ 
 0,&\hbox{if}~s=0. \end{cases}\EE

Taking $s\!=\!0$ in \e_ref{expind_e}, we find by induction that 
\BE{Phi0_e}\Phi_{p,0}=\frac{(1+D\mu)^p}{I_0I_1\ldots I_{p-1}}\,\Phi_0\,.\EE
Since $\F_n\!=\!\F_0$ by Lemma~\ref{HGper_lmm} and $\Phi_0(0)\!=\!1$, 
setting $p\!=\!n$ in the above identity we obtain
$$(1+D\mu)^n=I_0\ldots I_{n-1}\,.$$
The first claim in \e_ref{muPhi0_e} now follows from 
\e_ref{HG1e0} and \e_ref{HG1e1}.

For each $p\ge0$, let
$$H_p(q)\equiv\frac{L^p(q)}{I_0(q)\ldots I_{p-1}(q)}.$$
By definition, \e_ref{HG1e0}, \e_ref{HG1e1}, \e_ref{HG1e2},
\e_ref{Phi0_e}, and the first identity in \e_ref{muPhi0_e},
\BE{HpProperties}  H_0=H_n=1,\qquad  
\quad H_1H_2\ldots H_n=L^{-\frac{n(l-1)}{2}}, \qquad
\Phi_{p,0}= H_p\Phi_0\,.\EE
Taking $s\!=\!1$ in \e_ref{expind_e} and using the first and last equations above,
we find inductively that
$$\Phi_{p,1}=H_p\left(\Phi_1+p\frac{D\Phi_0}{L}-
 p\frac{\Phi_0DL}{L^2}+\frac{\Phi_0}{L}\sum_{r=1}^p\frac{DH_r}{H_r}\right)
\qquad\forall p\,\ge0.$$  
Setting $p\!=\!n$ in this relation and using $\Phi_{n,1}\!=\!\Phi_1$,
along with the first and second equations in~\e_ref{HpProperties}, 
we find that 
$$\frac{D\Phi_0}{\Phi_0}=\frac{l\!+\!1}{2}\,\frac{DL}{L}\,.$$
Since $\Phi_0(0)\!=\!1\!=\!L(0)$, this confirms the second claim 
in~\e_ref{muPhi0_e}.

\subsection{Proof of \e_ref{Phi1_e}}
\label{Phi1_subs}

The argument in Section~\ref{muPhi0_subs} can be systematized as in \cite{ZaZ}
to obtain an algorithm for computing  every $\Phi_s$ by a differential recursion. 

Define $\xi_s\!\in\!\Q$ by
$$\prod_{k=1}^l\prod_{j=1}^{a_k}\left(a_kD+j\right)\equiv
  \a^{\a}\sum_{s=0}^n\xi_sD^s\in\Z[D];$$
thus, $\xi_n=1$, $\xi_{n-1}=(n\!+\!l)/2$, and
\BE{xi_e}\begin{split} 
\xi_{n-2} &=\frac{1}{24}\sum_{k=1}^l \frac{(a_k-1)(a_k+1)(3a_k+2)}{a_k}
+\frac{1}{4}\sum_{1\leq i<j\leq l}(1+a_i)(1+a_j)\\
&=-\frac{1}{12}\sum_{r=1}^l\frac{1}{a_r}+\frac{3n^2+n(6l-4)+3l^2-6l}{24}.
\end{split}\EE
Let 
$$\hD\equiv D\!+\!Lw\!:\Q(w)\big[\big[q\big]\big]\lra\Q(w)\big[\big[q\big]\big].$$ 
The series  $\oF(w,q)\equiv\ne^{-\mu(q)w}\F(w,q)$ admits an asymptotic expansion
\BE{barFexp_e}\oF(w,q)\sim \sum_{s=0}^{\i}\Phi_s(q)\,w^{-s}  \qquad(w\to\i).\EE
Since $1\!+\!D\mu\!=\!L$ by the first claim in~\e_ref{muPhi0_e}
and $\F(w,q)$ satisfies the ODE 
$$\left\{D_w^n-w^n-q\prod_{k=1}^l\prod_{r=1}^{a_k}\left(a_kD_w+r\right)\right\}\F=0\,,$$
the series  $\oF(w,q)$ satisfies the ODE
\BE{barFODE_e} \fL\oF=0,\EE 
where $\fL$ is the differential operator
\BE{Lexp_e}\begin{split}
\fL \equiv L^n\left[\hD^n-w^n-q\prod_{k=1}^l\prod_{r=1}^{a_k}\big(a_k\hD+r\big)\right] 
= \hD^n-(Lw)^n-(L^n\!-\!1)\sum_{s=0}^{n-1}\xi_s\hD^s\,.
\end{split}\EE

Since $DL\!=\!L(L^n\!-\!1)/n$, as in \cite[Section~2.4]{ZaZ}
\BE{Dexp_e}
\hD^s=\sum_{k=0}^s\sum_{i=0}^k\binom{s}{i}\H_{s-i,k-i}(L^n)(Lw)^{s-k}D^i\,,\EE
where the polynomials $\H_{m,j}\in\Q[X]$ are defined by 
\begin{equation*}\label{Hdfn_e}\begin{split}
\H_{m,j}&\equiv0 \quad\hbox{if}~~m<0,~\hbox{or}~j\!<\!0,~\hbox{or}~j\!>\!m,\qquad
\H_{0,0}\equiv1;\\
\H_{m,j}(X)&\equiv\H_{m-1,j}(X)+
(X\!-\!1)\bigg(X\frac{\nd}{\nd X}+\frac{m\!-\!j}{n}\bigg)\H_{m-1,j-1}(X)
\quad\hbox{if}~ m\ge1,~0\le j\le m. 
\end{split}\end{equation*}
In particular, for $m\!\ge\!0$
\BE{Hlow_e}\begin{split} 
\H_{m,0}(X)&=1, \qquad \H_{m,1}(X)=\frac{1}{n}\binom{m}{2}(X\!-\!1), \\
\H_{m,2}(X)&=\frac{1}{n^2}\binom{m}{3}\big((n\!+\!1)X-1\big)(X\!-\!1)
  +\frac{3}{n^2}\binom{m}{4}(X\!-\!1)^2\,.
\end{split}\EE
By \e_ref{Lexp_e} and \e_ref{Dexp_e}, 
\BE{Lexp_e2} \fL=\sum_{k=1}^n(Lw)^{n-k}\fL_k\,, \EE
where $\fL_k$ is the differential operator of order $k$ given by
\BE{Ldfn_e}
\fL_k=\sum_{i=0}^k\left[\binom{n}{i}\H_{n-i,k-i}(L^n) 
-(L^n\!-\!1)\sum_{r=1}^{k-i}\binom{n\!-\!r}{i}\xi_{n-r}\,\H_{n-i-r,k-i-r}(L^n)\right]D^i\,.
\EE
By \e_ref{Hlow_e}, the first two of these operators are
\BE{L12_e}\begin{split}  
\fL_1&=nD-\frac{l\!+\!1}{2}(L^n\!-\!1)
=nL^{\frac{l+1}{2}}DL^{-\frac{l+1}{2}}\,,\\
\fL_2&=\binom{n}{2}D^2-\frac{(l\!+\!2)(n\!-\!1)}{2}(L^n\!-\!1)D\\
&\qquad+\left[\frac{(n\!-\!1)(n\!-\!2)(n\!-\!6l\!-\!5)}{24n}L^n
   +\frac{(3n\!+\!6l\!+\!5)(n\!-\!1)(n\!-\!2)}{24n}-\xi_{n-2}\right](L^n\!-\!1)\,.
\end{split}\EE
Combining \e_ref{barFexp_e}, \e_ref{barFODE_e}, and \e_ref{Lexp_e2}, 
we obtain the following.          
            
\begin{prp}\label{Phithm} 
The power series $\Phi_s\in\Q\big[\big[q\big]\big]$, $s\ge0$, defined by \e_ref{asym},
are determined by the first-order ODEs
\begin{equation}\label{PhiODE}
  \fL_1(\Phi_s)\+\frac1L\,\fL_2(\Phi_{s-1})\+\frac1{L^2}\fL_3(\Phi_{s-2})
   \+\ldots\+\frac1{L^{n-1}}\,\fL_n(\Phi_{s+1-n})\=0, \quad s\ge0,
   \end{equation}
together with the initial conditions $\Phi_s(0)\!=\!\de_{0,s}$
and $\Phi_s=0$ for $s<0$.
\end{prp}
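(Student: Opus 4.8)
The plan is to substitute the asymptotic expansion \eqref{barFexp_e} of $\oF$ into the differential equation \eqref{barFODE_e} that it satisfies and to read off, power of $w$ by power of $w$, the recursion \eqref{PhiODE}, using the decomposition \eqref{Lexp_e2} of the operator $\fL$.

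First I would set up the framework in which this substitution is legitimate. By \eqref{Lexp_e2}--\eqref{Ldfn_e}, $\fL=\sum_{k=1}^n(Lw)^{n-k}\fL_k$, where each $\fL_k$ is a polynomial of degree $k$ in $D$ whose coefficients are polynomials in $L^n\in\Q\big[\big[q\big]\big]$; in particular the $\fL_k$ involve no $w$, and the entire $w$-dependence of $\fL$ is carried by the factors $(Lw)^{n-k}=L^{n-k}w^{n-k}$. Writing $\oF(w,q)=\sum_{d\ge0}\oF_d(w)\,q^d$, each $\oF_d$ is a rational function of $w$ which, by \eqref{barFexp_e}, is regular at $w=\i$ with Taylor expansion $\oF_d(w)=\sum_{s\ge0}\Phi_{s,d}\,w^{-s}$, where $\Phi_s=\sum_{d\ge0}\Phi_{s,d}\,q^d$. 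Since applying $\fL_k$ commutes with extracting the coefficient of $q^d$ and does not touch $w$, the coefficient of $q^d$ in $\fL_k\oF$ is a finite $\Q$-linear combination of the $\oF_{d'}$ with $d'\le d$, hence again rational and regular at $w=\i$, with Taylor expansion $\sum_{s\ge0}(\fL_k\Phi_s)_d\,w^{-s}$. Thus the operators $\fL_k$ may be applied term by term to the asymptotic series, giving $\fL_k\oF\sim\sum_{s\ge0}(\fL_k\Phi_s)\,w^{-s}$.

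Now I would simply expand: $0=\fL\oF=\sum_{k=1}^nL^{n-k}w^{n-k}\,(\fL_k\oF)$, so that, coefficient-wise in $q$, the identically zero rational function $\fL\oF$ has Laurent expansion $\sum_{k=1}^n\sum_{s\ge0}L^{n-k}\,(\fL_k\Phi_s)\,w^{n-k-s}$ at $w=\i$. By uniqueness of the Laurent expansion the coefficient of each power of $w$ vanishes; collecting the coefficient of $w^{n-1-s}$ (to which the term indexed by $(k,s')$ contributes precisely when $s'=s+1-k$) gives $\sum_{k=1}^nL^{n-k}\,\fL_k\Phi_{s+1-k}=0$ for every $s\ge0$, with the convention $\Phi_j=0$ for $j<0$. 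Dividing by $L^{n-1}$, a unit in $\Q\big[\big[q\big]\big]$ since $L(0)=1$, yields exactly \eqref{PhiODE}. The initial conditions are immediate: $\mu(0)=0$ by \eqref{muPhi0_e}, so $\oF(w,0)=\F(w,0)=1$, whence $\oF_0\equiv1$ and $\Phi_s(0)=\Phi_{s,0}=\de_{0,s}$, while $\Phi_s=0$ for $s<0$ by definition. Finally, to see that \eqref{PhiODE} together with these conditions \emph{determines} the $\Phi_s$, I would note that for each $s$ the only term involving $\Phi_s$ is $\fL_1(\Phi_s)=nD\Phi_s-\tfrac{l+1}{2}(L^n\!-\!1)\Phi_s$ by \eqref{L12_e}, a first-order linear ODE in $q$ with inhomogeneous term built from $\Phi_{s-1},\dots,\Phi_{s+1-n}$; since $L^n\!-\!1=O(q)$, comparing coefficients of $q^d$ turns it into the recursion $n\,d\,\Phi_{s,d}=(\text{expression in }\Phi_{s',d'}\text{ with }s'<s\text{ or }d'<d)$, which has a unique solution once $\Phi_{s,0}$ is pinned by the initial condition.

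There is no genuine obstacle here: the substantive inputs — the ODE \eqref{barFODE_e}, the factorization \eqref{Lexp_e2}, and the existence of the expansion \eqref{asym} — are already in hand. The only points needing a little care are the legitimacy of applying $\fL_k$ term by term to the asymptotic series, which is why I would first pass to coefficients of $q^d$ (honest rational functions of $w$ regular at $w=\i$), and the index bookkeeping identifying the coefficient of $w^{n-1-s}$ with equation \eqref{PhiODE}.
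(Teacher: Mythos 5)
Your proposal is correct and follows exactly the route the paper intends: the paper's entire proof is the one sentence ``Combining \e_ref{barFexp_e}, \e_ref{barFODE_e}, and \e_ref{Lexp_e2}, we obtain the following,'' and your substitution of the expansion into $\fL\oF=0$, extraction of the coefficient of $w^{n-1-s}$, and division by $L^{n-1}$ is precisely that combination, with the index bookkeeping done correctly. The additional care you take (passing to $q^d$-coefficients to justify termwise application of the $\fL_k$, and checking that $\fL_1$ being first order with $L(0)=1$ makes the system determine the $\Phi_s$) only fills in details the paper leaves implicit.
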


The $s\!=\!0$ case of \e_ref{PhiODE} immediately recovers 
the second claim in~\e_ref{muPhi0_e}.
The $s\!=\!1$ case of \e_ref{PhiODE} then gives
$$nD\big(\Phi_1/\Phi_0\big)
=-\frac{1}{L}(L^n\!-\!1)\left(
\frac{n^2\!-\!n\!-\!3l^2\!+\!1}{24n}(n\!-\!1)L^n
+\left[\frac{1}{12}\sum_{r=1}^l\frac{1}{a_r}-\frac{3l^2\!-\!1}{24n}\right]\right).$$
Along with $DL\!=\!L(L^n\!-\!1)/n$ and $\Phi_1(0)=0$, this identity gives \e_ref{Phi1_e}.

\section{Computation of reduced genus~1 GW-invariants}
\label{alg_sec}

In this section, we deduce Theorem~\ref{main_thm} below from 
Proposition~\ref{equivred_prp}, using Lemmas~\ref{algebr_lmm1} and~\ref{Z_lmm} 
and the properties of the hypergeometric series~$\F(w,q)$
described by Proposition~\ref{HG_thm2} and~\ref{HG_thm1}.
Lemma~\ref{algebr_lmm1} is used to drop purely equivariant terms from
the power series~$\X$, while Lemma~\ref{Z_lmm} provides 
the relevant information about the genus~0 generating functions $\cZ^*_i$, $\cZ^*_{ji}$, and~$\wt{\cZ}^*_{ii}$.

\begin{mythm}\label{main_thm}
The generating function $\X_0(Q)$ defined by \e_ref{pushclass_e} is given~by
$$\X_0(Q)=Q\frac{\nd}{\nd Q}\left(\ti{A}(q)+\ti{B}(q)\right),$$
where $Q$ and $q$ are related by the mirror map \e_ref{mirmap_e} and
\begin{alignat*}{1}
\ti{A}(q)&= \frac{n}{48}\!\left(n\!-\!1- 2\sum_{k=1}^l\frac{1}{a_k}\right)\mu(q)\\
&\quad -\begin{cases}
\frac{n+1}{48}\log\left(1\!-\!\a^{\a}q\right)+\!\!
\sum\limits_{p=0}^{\frac{n-2-l}{2}}\!\!\frac{(n-l-2p)^2}{8}\log I_p(q),
&\hbox{if}~2|(n\!-\!l);\\
\frac{n-2}{48}\log\left(1\!-\!\a^{\a}q\right)+\!\!
\sum\limits_{p=0}^{\frac{n-3-l}{2}}\!\!\frac{(n-l-2p)^2-1}{8}\log I_p(q),
&\hbox{if}~2\!\not|(n\!-\!l);
\end{cases}\\
\ti{B}(q)&=-\frac{n}{48}\left(n\!-\!1\!-\!2\sum_{k=1}^l\!\frac{1}{a_k}\right)\mu(q)
+\frac{\lr{\a}}{24}\eps_0(\a)\left[\log I_0(q)\right]
+\frac{\lr{\a}}{24}\eps_1(\a)J(q)\\
&\quad+\frac{l\!+\!1}{48}\log\left(1\!-\!\a^{\a}q\right)
+\frac{\lr{\a}}{24}\sum_{p=2}^{n-1-l}\!
\left\llbracket
\frac{(1\!+\!w)^n}{\prod\limits_{k=1}^l(1\!+\!a_k w)}\right\rrbracket_{w;n-1-l-p}
\hspace{-.5in}\left\llbracket\log \ti\F(w,q)\right\rrbracket_{w;p}
,
\end{alignat*}
where $\ti\F$, $I_p$, $J$, $\mu$, and $\lrbr{\cdot}_{w;p}$ 
are defined by~\e_ref{F0def}, \e_ref{Idfn}, \e_ref{mirmap_e}, \e_ref{asym}
and~\e_ref{coeff_e}, respectively.
\end{mythm}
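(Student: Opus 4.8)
The plan is to start from the localization formula of Proposition~\ref{equivred_prp}, which expresses each $\X(\al,\al_i,Q)$ as a sum of residue expressions $\cA_i$, $\ti\cA_{ij}$, $\cB_i$, $\ti\cB_{ij}$ built out of the genus~$0$ generating functions $\cZ^*_i$, $\cZ^*_{ji}$, $\wt\cZ^*_{ii}$. By \e_ref{pushclass_e} the power series $\X_0(Q)$ is the coefficient of $x^{n-2}$ in $\X(\al,x,Q)$, equivalently the ``non-equivariant part'': it is obtained from the $\X(\al,\al_i,Q)$ by a symmetrization/interpolation argument. Following \cite[Lemma~3.3]{bcov1}, I would extract this non-equivariant part from the right-hand side of \e_ref{equivred_e}. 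This requires explicit formulas for $\cZ^*_i,\cZ^*_{ji},\wt\cZ^*_{ii}$ — here is where Lemma~\ref{Z_lmm} (the complete-intersection analogue of the corresponding statements in \cite{bcov1}, fed by \cite[Theorem~11.8]{Gi} and \cite[Theorem~4]{bcov0_ci}) enters — together with Lemma~\ref{algebr_lmm1}, which licenses discarding the terms that are purely equivariant (i.e.\ whose contribution to $\X_0$ vanishes). The output of this stage should be a formula for $\X_0(Q)$ as a $Q$-derivative of an expression built entirely from the hypergeometric data: the mirror map $J$, the series $I_p$, the asymptotic coefficient $\mu$, and the coefficient-extraction brackets $\lrbr{\log\ti\F}_{w;p}$.

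The second stage is to identify that expression with $\ti A(q)+\ti B(q)$. The $\ti B$ part collects the contributions coming from the $B$-graphs ($\cB_i$ and $\ti\cB_{ij}$), which after the residue computations and the genus~$0$ mirror formula reorganize into $\frac{\lr\a}{24}\eps_0(\a)\log I_0 + \frac{\lr\a}{24}\eps_1(\a)J$ plus the $\log\ti\F$ double-sum — this is exactly the ``geometric'' genus~$1$ correction packaged through the hyperplane relation \e_ref{HPrel_e}, \e_ref{vbsplit_e} and the identity $1+\sum Z_r(Q)w^{r+2}=\ne^{-J(q)w}\ti\F(w,q)/I_0(q)$ from the proof of Lemma~\ref{g1diff_lmm}; the Chern-class coefficients $\eps_0(\a),\eps_1(\a)$ appear via \e_ref{eps01_e} and \e_ref{chclass_e}. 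The $\ti A$ part collects the contributions from the $A$-graphs ($\cA_i$ and $\ti\cA_{ij}$), which involve $\wt\cZ^*_{ii}$ and the regularized series $\Phi_0(\al_i,Q)$, $\eta_i(Q)$ of \e_ref{Phi0dfn_e}, \e_ref{etadfn_e}. Evaluating these residues non-equivariantly produces the $\log(1-\a^\a q)$ term and the $\sum_p \frac{(n-l-2p)^2}{8}\log I_p$ (resp.\ $\frac{(n-l-2p)^2-1}{8}$) sum; the case split on the parity of $n-l$ is forced by how the range of $p$ in the $A$-graph sum truncates, exactly as in \cite[Section~3]{bcov1}. The cancelling $\pm\frac{n}{48}(n-1-2\sum 1/a_k)\mu(q)$ terms in $\ti A$ and $\ti B$ are bookkeeping artifacts of splitting the total answer along the $A/B$ dichotomy — they must be tracked carefully so that the two halves individually match the stated formulas, but they drop out of $\ti A+\ti B$.

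The main obstacle is the non-equivariant extraction itself: passing from the $n$ rational functions $\X(\al,\al_i,Q)\in\Q_\al[[Q]]$ to the single series $\X_0(Q)\in\Q[[Q]]$ requires controlling the pole structure of the residue expressions in the $\al_k$, showing (via Lemma~\ref{algebr_lmm1}) that the genuinely equivariant pieces contribute nothing to the $x^{n-2}$ coefficient, and then recognizing the surviving residues as the $w$-expansions of $\log\ti\F$, $\log I_p$, and $\log(1-\a^\a q)$. The residue computations for $\cB_i,\ti\cB_{ij}$ now have the factor $\prod_{r=1}^l(a_r\al_i+\hb)$ in the denominator rather than a single $n\al_i+\hb$, so the poles at $\hb=-\a\al_i$ proliferate; summing these residues and matching against $\eps_0,\eps_1$ (computed by the residue manipulations spelled out after Theorem~\ref{main_thm0}) is the bulk of the genuinely new work relative to \cite{bcov1}. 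The rest is a faithful adaptation of \cite[Section~3]{bcov1} once Lemmas~\ref{algebr_lmm1} and~\ref{Z_lmm} and Propositions~\ref{HG_thm2}, \ref{HG_thm1} are in hand, which is why this section is flagged in the outline as the one that ``significantly extends'' the hypersurface case only at the level of algebraic input.
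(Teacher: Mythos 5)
Your proposal follows essentially the same route as the paper: extract the coefficient of $\al_i^{n-2}$ modulo the subspace $\K_i$ of Lemma~\ref{algebr_lmm1} from the four residue expressions of Proposition~\ref{equivred_prp}, substitute the hypergeometric descriptions of $\cZ_i^*$, $\cZ_{ji}^*$, $\wt\cZ_{ii}^*$ from Lemma~\ref{Z_lmm}, and evaluate the resulting residues using Propositions~\ref{HG_thm2} and~\ref{HG_thm1}, with the $A$-graphs producing $\ti{A}$ and the $B$-graphs producing $\ti{B}$ exactly as in Section~\ref{mainthmpf_subs}. The only caveats are that your text is a roadmap deferring the actual residue evaluations (the content of the paper's Lemmas~\ref{res_lmm} and~\ref{res_lmm2}), and that your attribution of the $\eps_0(\a)\log I_0+\eps_1(\a)J$ and $\log\ti\F$ terms to the hyperplane relation of Lemma~\ref{g1diff_lmm} is heuristic --- in the computation itself these arise as the residue at $\hb=\i$ of the $\cB_i$-integrand.
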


We will show that the terms $\cA_i$ and $\ti\cA_{ij}$, with $j\!\in\![n]$,
in~\e_ref{equivred_e} together contribute $\frac{1}{2}Q\frac{\nd}{\nd Q}\bA(q)$
to $\X_0(Q)$, where
\BE{main_thm_e1}\begin{split}
\bA(q)=\frac{n}{24}\left(n\!-\!1\!-\!2\sum_{r=1}^l\!\frac{1}{a_r}\right)\mu(q)
-\frac{3(n\!-\!1\!-\!l)^2+(n\!-\!2)}{24}\log\left(1-\a^{\a}q\right)
\qquad&\\
-\sum_{p=0}^{n-2-l}\binom{n\!-\!l\!-\!p}{2}\log I_p(q),&
\end{split}\EE
while the terms $\cB_i$ and $\ti\cB_{ij}$, with $j\!\in\![n]$,
together contribute $Q\frac{\nd}{\nd Q}\ti{B}(q)$.
Since
\begin{eqnarray*}
\sum_{r=0}^{\frac{n-2-l}{2}}\log I_r(q)+\frac{1}{2}\log I_{\frac{n-l}{2}}(q)
=-\frac{1}{2}\log\left(1\!-\!\a^{\a}q\right), \qquad
\hbox{if}~2|(n\!-\!l),\\
\sum_{r=0}^{\frac{n-1-l}{2}}\log I_r(q)
=-\frac{1}{2}\log\left(1\!-\!\a^{\a}q\right),
\qquad \hbox{if}~2\!\not|(n\!-\!l),
\end{eqnarray*}
by~\e_ref{HG1e3} and~\e_ref{HG1e1},
the expression on the right-hand side of~\e_ref{main_thm_e1} 
equals twice the right-hand side in the first equation in Theorem~\ref{main_thm}.

\subsection{Some algebraic notation and observations}
\label{symmalg_subs}

This section recalls the statement of \cite[Lemma 3.3]{bcov1},
which shows that  most terms appearing in the computation of $\X(\al,x,Q)$ 
have no effect on~$\X_0(Q)$.
We then set up additional related notation and make a few algebraic observations
that help streamline computations in the remainder of the paper.

For each $p\in[n]$, let $\si_p$ be the $p$-th elementary symmetric 
polynomial in $\al_1,\ldots,\al_n$.
Denote by
$$\Q[\al]^{S_n}\equiv\Q[\al_1,\ldots,\al_n]^{S_n}\subset\Q [\al_1,\ldots,\al_n]$$
the subspace of symmetric polynomials, by $\cI\subset\Q[\al]^{S_n}$ 
the ideal generated by $\si_1,\ldots,\si_{n-1}$, and~by
$$\ti\Q[\al]^{S_n} \equiv 
\Q[\al_1,\ldots,\al_n]_{\lr{\al_j,(\al_j-\al_k)|j\neq k}}^{S_n} \subset \Q_{\al}$$
the subalgebra of symmetric rational functions in $\al_1,\ldots,\al_n$
whose denominators are products of $\al_j$ and $(\al_j\!-\!\al_k)$ with $j\!\neq\!k$.
For each $i\!=\!1,\ldots,n$, let 
$$\ti\Q_i[\al]^{S_{n-1}} \equiv 
\Q[\al_1,\ldots,\al_n]_{\lr{\al_i,(\al_i-\al_k)|k\neq i}}^{S_{n-1}}
\subset \Q_{\al}$$
be the subalgebra consisting of rational functions symmetric in $\{\al_k\!:k\!\neq\!i\}$
and with denominators that are  products of $\al_i$ and 
$(\al_i\!-\!\al_k)$ with $k\!\neq\!i$.
Let
\begin{equation}\label{congspacedfn_e}
\K_i\equiv\Span_{\Q}\big\{\cI\cdot\ti\Q_i[\al]^{S_{n-1}},\al_i^{n-2}\cdot\cI\ti\Q[\al]^{S_n},
\{1,\al_i,\ldots,\al_i^{n-3},\al_i^{n-1}\}\cdot\ti\Q[\al]^{S_n}\big\}.
\end{equation}

\begin{lmm}[{\cite[Lemma 3.3]{bcov1}}\footnotemark]\label{algebr_lmm1}
 If $n\!\ge\!2$, the linear span of $\al_i^{n-2}$ is disjoint from $\K_i$:
$$\Span\big\{\al_i^{n-2}\big\}\cap \K_i=\{0\}\subset \Q_{\al}.$$
\end{lmm}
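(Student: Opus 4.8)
The plan is to prove this by a dimension/grading argument: exhibit an explicit $\Q$-linear functional on a suitable space of rational functions that vanishes on $\K_i$ but not on $\al_i^{n-2}$. The natural candidate is a residue-type operator. Since all the building blocks of $\K_i$ are rational functions of $\al_i$ with denominators that are products of $\al_i$ and the factors $(\al_i-\al_k)$, $k\neq i$, we may regard every element of $\K_i$ as a rational function of $\al_i$ alone (with coefficients in $\Q(\al_k:k\neq i)$), and study its partial-fraction behaviour. First I would fix $i$ and set $\beta_k\equiv\al_k$ for $k\neq i$, treating these as formal parameters, and consider the Laurent expansion at $\al_i=\infty$. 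The key observation is that $\al_i^{n-2}$ contributes a nonzero coefficient of $\al_i^{n-2}$, whereas each generator of $\K_i$ must contribute $0$ there.

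The three types of generators of $\K_i$ are handled separately. For the last type, $\{1,\al_i,\ldots,\al_i^{n-3},\al_i^{n-1}\}\cdot\ti\Q[\al]^{S_n}$: an element of $\ti\Q[\al]^{S_n}$ is symmetric in all $n$ variables with denominators dividing products of $\al_j$ and $(\al_j-\al_k)$; the point is that such a function, expanded in $\al_i$ at infinity, has a controlled polynomial part, and multiplying by $\al_i^m$ with $m\in\{0,\ldots,n-3,n-1\}$ avoids producing exactly $\al_i^{n-2}$ because the ``missing'' exponent $n-2$ is forced by the symmetry constraint — this is essentially the content of the $l=1$ case already proved in \cite[Lemma 3.3]{bcov1}. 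For the second type, $\al_i^{n-2}\cdot\cI\ti\Q[\al]^{S_n}$: here the extra factor from the ideal $\cI$ (generated by $\si_1,\ldots,\si_{n-1}$) kills the constant term, so after multiplying by $\al_i^{n-2}$ one only gets powers $\al_i^{n-3}$ and below together with powers $\al_i^{n-1}$ and above — again avoiding $\al_i^{n-2}$. For the first type, $\cI\cdot\ti\Q_i[\al]^{S_{n-1}}$: here the functions are only symmetric in $\{\al_k:k\neq i\}$, but membership in $\cI$ means that when one substitutes the relation $\prod_{k=1}^n(x-\al_k)$ (equivalently, works modulo the full ideal generated by $\si_1,\ldots,\si_{n-1}$ in the polynomial part), the top-degree behaviour in $\al_i$ is again pinned away from $n-2$.

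Concretely, the mechanism I would make precise is: in $H^*_\T(\P^{n-1})$, by \e_ref{pncoh_e} one has the relation $\prod_{k=1}^n(x-\al_k)=0$, i.e. $x^n=\si_1 x^{n-1}-\si_2 x^{n-2}+\cdots$; iterating, $x^m$ for $m\geq n$ is an $\cI$-combination of $1,x,\ldots,x^{n-1}$ together with possibly $x^{n-1}$ times a unit. Restricting to $P_i$ sends $x\mapsto\al_i$, and the upshot is that the $\Q$-span of $\{1,\al_i,\ldots,\al_i^{n-1}\}$ modulo $\cI\cdot\ti\Q[\al]^{S_n}$ is controlled, with $\al_i^{n-2}$ singled out. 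The cleanest formalization is to define the functional $\ell:\Q_\al\to\Q$ on the subspace spanned by $\K_i$ and $\al_i^{n-2}$ by first extracting the coefficient of $\al_i^{n-2}$ in the Laurent expansion at $\al_i=\infty$ (this makes sense on the relevant subspace because denominators are products of $\al_i$ and $(\al_i-\al_k)$, hence the expansion exists), then setting all $\al_k=0$ for $k\neq i$; one checks $\ell(\al_i^{n-2})=1$ while $\ell$ annihilates each of the three generating families by the degree considerations above. Since this lemma is quoted verbatim as \cite[Lemma 3.3]{bcov1}, the honest and efficient route is simply to invoke that reference, noting that its statement and proof are independent of $l$ and involve only the ambient $\P^{n-1}$; the main obstacle, were one to reprove it, is the first family $\cI\cdot\ti\Q_i[\al]^{S_{n-1}}$, where the reduced symmetry (only in the $\al_k$ with $k\neq i$) means one cannot argue purely by total degree and must genuinely use that the factor lies in the ideal $\cI$ to control the $\al_i^{n-2}$-coefficient after the specialization $\al_k\to 0$.
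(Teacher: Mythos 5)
Your proposed separating functional $\ell$ --- extract the coefficient of $\al_i^{n-2}$ in the Laurent expansion at $\al_i=\i$, then set $\al_k=0$ for $k\neq i$ --- does not annihilate $\K_i$, so the core mechanism of your argument fails. A decisive counterexample sits in the first family: since $\al_i^{n-3}\in\ti\Q_i[\al]^{S_{n-1}}$ and $\si_1\in\cI$, the element $\si_1\al_i^{n-3}=\al_i^{n-2}+\bigl(\sum_{k\neq i}\al_k\bigr)\al_i^{n-3}$ lies in $\cI\cdot\ti\Q_i[\al]^{S_{n-1}}\subset\K_i$ and satisfies $\ell(\si_1\al_i^{n-3})=1=\ell(\al_i^{n-2})$. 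The third family gives another: the complete homogeneous symmetric polynomial $h_{n-2}(\al)=\sum_{j=0}^{n-2}\al_i^j\,h_{n-2-j}(\al_k\!:k\neq i)$ lies in $1\cdot\ti\Q[\al]^{S_n}$ and again has $\ell=1$. So $\K_i$ contains elements in which $\al_i^{n-2}$ appears with coefficient $1$ even after specializing $\al_k\to0$; no argument based on which powers of $\al_i$ can occur will work, and the actual content of the lemma is that the accompanying symmetric ``tails'' (such as $\si_1'\al_i^{n-3}$ above) can never be completely cancelled inside $\K_i$. Your subsidiary degree claims are also false as stated: for the second family, $\al_i^{n-2}\si_2=\al_i^{n-1}\si_1'+\al_i^{n-2}\si_2'$ (with $\si_p'$ the elementary symmetric functions of $\{\al_k\!:k\neq i\}$) visibly produces the power $\al_i^{n-2}$, contradicting the assertion that the $\cI$-factor confines one to powers $\le n-3$ or $\ge n-1$. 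Moreover $\ell$ is not even well defined on all of $\K_i$, since the extracted coefficient can retain poles along $\al_k=0$ or $\al_j=\al_k$.

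Your fallback of invoking \cite[Lemma 3.3]{bcov1} ``verbatim'' also has a gap: the statement here is not a verbatim quote. The space $\K_i$ in (\ref{congspacedfn_e}) contains the additional family $\al_i^{n-2}\cI\ti\Q[\al]^{S_n}$, absent from the definition in \cite{bcov1}, so the present lemma is strictly stronger than the cited one. This is precisely what the paper's footnote addresses: it records that the new family contributes only a term $\al_i^{n-1}g_{n-1}$ with $g_{n-1}\in\cI$ at a specific point of the proof in \cite{bcov1}, and that this does not affect the conclusion. A correct write-up must either reproduce the (genuinely global) argument of \cite[Lemma 3.3]{bcov1} and check the extra family, or cite the reference together with that verification; neither a bare citation nor your coefficient-extraction functional suffices.
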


\footnotetext{The definition of $\K_i$ in \cite{bcov1} is missing
$\al_i^{n-2}\cI\ti\Q[\al]^{S_n}$, but the proof of \cite[Lemma 3.3]{bcov1}
still goes through.
This change adds the term $\al_i^{n-1}g_{n-1}$, with $g_{n-1}\!\in\!\cI$, 
to the second numerator in \cite[(3.13)]{bcov1} and $g_{n-1}$
to the right-hand side of \cite[(3.15)]{bcov1}.
As $g\!\in\!\cI$, this addition has no effect on the concluding sentence
in the proof of Lemma~3.3 in~\cite{bcov1}.}

For each $i\!=\!1,\ldots,n$, let
$$\hQ\equiv
\Q(\hb,\al_i)[\al]_{\lr{(\al_i-\al_k+r\hb)|k\in i,r\in\Z,k\neq i}}^{S_{n-1}}
\subset\Q_{\al}(\hb)$$
be the subalgebra consisting of rational functions symmetric in $\{\al_k\!:k\!\neq\!i\}$
and with denominators that are a product of a polynomial with rational coefficients in $\hb$ and $\al_i$ and
of linear factors of the form $(\al_i\!-\!\al_k\!+\!r\hb),r\in\Z$.
Denote by 
$$\cS_{i,\hb}\subset \hQ$$
the subalgebra consisting of rational functions of the form 
 $A+B\prod\limits_{k=1}^l(a_k\al_i+\hb)$ 
 with $A,B\in\hQ$ both regular at $\hb\!=\!-a_k\al_i$
 for every $k\!\in\![l]$ and the denominator of $A$ an element of $\Q[\al_i,\hb]$.
We define
$$\iQ_{\hb_1,\hb_2}\subset \Q_{\al}(\hb_1,\hb_2)$$
to be the subalgebra generated by $\iQ_{\hb_1}$  and $\iQ_{\hb_2}$.
If in addition $j\!\in\![n]$, let 
\begin{equation*}\begin{split}
\K^{(i,j)} &\equiv 
\Span_{\Q}\big\{\al_i^{n-2}\cI\cdot\ti{\Q}_j[\al]^{S_{n-1}},
 \{1,\al_i,\ldots,\al_i^{n-3},\al_i^{n-1}\}\ti\Q_j[\al]^{S_{n-1}}\big\}
   \subset\Q_{\al},\\
\K^{(i,j)}_{\hb}&\equiv
\Span_{\Q}\big\{\al_i^{n-2}\cI\cdot\cS_{j,\hb},
\{1,\al_i,\ldots,\al_i^{n-3},\al_i^{n-1}\}\cS_{j,\hb}\big\}
\subset\Q_{\al}(\hb).
\end{split}\end{equation*}
All statements in the next lemma follow immediately from the definitions.


\begin{lmm}\label{algprop_lmm}
If $i\!\in\![n]$,
\begin{alignat*}{1}
F\in\hQ\big[\big[q\big]\big] \qquad&\Lra\qquad\fR_{\hb=0}F, 
\fR_{\hb=\i}F\in\ti\Q_i[\al]^{S_{n-1}}\big[\big[q\big]\big]; 
\\
F\in\iQ_{\hb_1,\hb_2}\big[\big[q\big]\big] \qquad&\Lra\qquad
\fR_{\hb_1=0}\fR_{\hb_2=0}\left\{\frac{F}{\hb_1+\hb_2}\right\}
\in\iQ\big[\big[q\big]\big];\,
\\
F\in\cS_{i,\hb}\big[\big[q\big]\big]\qquad&\Lra\qquad\fR_{\hb=-\a\al_i}
\left\{\frac{F}{\prod\limits_{r=1}^l(a_r\al_i+\hb)}\right\}
\in\iQ\big[\big[q\big]\big]. 
\end{alignat*}
If in addition $F,G\!\in\!q\hQ\big[\big[q\big]\big]$, then
\begin{equation*}\label{P5}
F-G\in \cI\cdot\hQ\big[\big[q\big]\big] ~~\Lra~~
\ne^F\!-\!\ne^G,\, \log(1\!+\!F)-\log(1\!+\!G)
\in \cI\cdot q\hQ\big[\big[q\big]\big].\,\,
\end{equation*}
\end{lmm}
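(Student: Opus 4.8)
The plan is to verify each of the four claimed implications directly from the definitions of $\hQ$, $\iQ_{\hb_1,\hb_2}$, $\cS_{i,\hb}$, and the congruence relations, since the lemma asserts these are all immediate. First I would treat the residue statement for $F\!\in\!\hQ\llbracket q\rrbracket$: writing $F$ as a rational function whose denominator is a product of a polynomial in $(\hb,\al_i)$ and linear factors $(\al_i-\al_k+r\hb)$, I would observe that $\fR_{\hb=0}F$ is obtained by expanding $F$ as a Laurent series in $\hb$ and extracting the $\hb^{-1}$-coefficient; since the specialization $\hb\to0$ (and the coefficient extraction) preserves symmetry in $\{\al_k:k\neq i\}$ and can only introduce denominators of the form $\al_i$ (from the polynomial factor in $\hb,\al_i$ evaluated near $\hb=0$) and $(\al_i-\al_k)$ (from the linear factors at $r=0$, or more precisely from repeated such factors), the result lies in $\ti\Q_i[\al]^{S_{n-1}}\llbracket q\rrbracket$. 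For $\fR_{\hb=\i}F$ one uses the substitution $\hb\mapsto w^{-1}$ built into the definition of $\fR_{\hb=\i}$ and argues the same way; here the key point is that homogeneity/degree bookkeeping keeps the output in the stated ring. I would be careful that $\hQ$ is closed under the operations used, which is why the denominator structure was spelled out so explicitly.

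For the second implication, given $F\!\in\!\iQ_{\hb_1,\hb_2}\llbracket q\rrbracket$, I would write $F$ as a sum of products $F_1(\hb_1)F_2(\hb_2)$ with $F_j\!\in\!\iQ_{\hb_j}$, note that $\tfrac{1}{\hb_1+\hb_2}$ has a simple pole contribution, and compute the iterated residue $\fR_{\hb_1=0}\fR_{\hb_2=0}$ by first extracting the $\hb_2$-residue — which by the first implication (applied in the $\hb_2$ variable) lands in $\ti\Q_i[\al]^{S_{n-1}}$ tensored with whatever survives in $\hb_1$ — and then the $\hb_1$-residue. The only subtlety is that the factor $\tfrac{1}{\hb_1+\hb_2}$ is not separately a polynomial in one $\hb_j$, but after taking the $\hb_2$-residue at $\hb_2=0$ the remaining expression in $\hb_1$ still has denominators of the allowed shape, so the second residue again lands in $\ti\Q_i[\al]^{S_{n-1}}\llbracket q\rrbracket$. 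For the third implication, for $F=A+B\prod_{r=1}^l(a_r\al_i+\hb)\in\cS_{i,\hb}$, dividing by $\prod_{r=1}^l(a_r\al_i+\hb)$ gives $A/\prod_{r}(a_r\al_i+\hb)+B$; the residue $\fR_{\hb=-\a\al_i}$ sums residues at $\hb=-a_r\al_i$, and since $A,B$ are regular there by hypothesis, only the simple poles of $1/\prod_r(a_r\al_i+\hb)$ contribute, each residue evaluating $A$ at $\hb=-a_r\al_i$ and dividing by $\prod_{r'\neq r}(a_{r'}-a_r)\al_i$; the resulting expression is symmetric in $\{\al_k:k\neq i\}$ with denominators among $\al_i$ and $(\al_i-\al_k)$, hence lies in $\ti\Q_i[\al]^{S_{n-1}}\llbracket q\rrbracket$.

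The last implication is the only one requiring a short argument rather than pure bookkeeping: assuming $F,G\!\in\!q\hQ\llbracket q\rrbracket$ with $F-G\in\cI\cdot\hQ\llbracket q\rrbracket$, I would write $\ne^F-\ne^G=\ne^G(\ne^{F-G}-1)$ and expand $\ne^{F-G}-1=\sum_{m\geq1}(F-G)^m/m!$; since $\cI$ is an ideal in $\Q[\al]^{S_n}$ and $\hQ$ is an algebra over it, every term $(F-G)^m$ lies in $\cI\cdot\hQ\llbracket q\rrbracket$, and because $F-G$ has no constant-in-$q$ term the sum converges $q$-adically and stays in $\cI\cdot q\hQ\llbracket q\rrbracket$; multiplying by $\ne^G\in\hQ\llbracket q\rrbracket$ preserves this. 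For $\log(1+F)-\log(1+G)$ I would similarly write $\log(1+F)-\log(1+G)=\log\bigl(1+\tfrac{F-G}{1+G}\bigr)$ and expand the logarithm as a power series in $\tfrac{F-G}{1+G}$, which lies in $\cI\cdot q\hQ\llbracket q\rrbracket$ since $(1+G)^{-1}\in\hQ\llbracket q\rrbracket$. I do not anticipate a genuine obstacle here; the main thing to get right is tracking which denominators can appear after each residue or specialization so that one never escapes the prescribed subalgebra, and ensuring the $q$-adic convergence that makes the exponential and logarithm series meaningful.
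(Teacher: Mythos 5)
Your proposal is correct and takes the same route as the paper, which offers no proof at all beyond the remark that all four statements follow immediately from the definitions; your write-up is exactly that definitional unwinding (residue extraction preserving symmetry and the allowed denominators, factoring $\ne^F-\ne^G=\ne^G(\ne^{F-G}-1)$ and expanding $q$-adically). The only small gloss is in the third implication, where your explicit residue formula at $\hb=-a_r\al_i$ tacitly assumes the $a_r$ are distinct — repeated $a_r$ give higher-order poles whose residues involve derivatives of $A$, but these still land in $\iQ$ because the denominator of $A$ lies in $\Q[\al_i,\hb]$, so the conclusion is unaffected.
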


\subsection{The genus zero generating functions}
\label{g0form_subs}

We will now express the genus 0 generating functions 
$\cZ_i^*$, $\cZ_{ij}^*$, and $\wt\cZ_{ii}^*$ defined in Section~\ref{equivcomp_subs}
in terms of the hypergeometric series~$\F$ of~\e_ref{F0def_e}
and the operator~$\bM$ of~\e_ref{Mdfn}. 

\begin{lmm}\label{Z_lmm}
The genus 0 generating functions $\cZ_i^*$, $\cZ_{ij}^*$, and $\wt\cZ_{ii}^*$ satisfy
\begin{alignat}{1}\label{Z1pt_lmm_e}
\left[(\al_i+\hb)^n-\al_i^n\right]\left[1\!+\!\cZ_i^*(\hb,Q)-\ne^{-J(q)\frac{\al_i}{\hb}}\frac{\F(\al_i/\hb,q)}{I_0(q)}\right]
~&\in \cI\cdot q\cS_{i,\hb}\big[\big[q\big]\big]\,,\\
\label{Z1pt2_lmm_e}
\left[(\al_j+\hb)^n-\al_j^n\right]\left[\al_i^{n-2}\al_j+\hb\cZ_{ji}^*(\hb,Q) 
-\al_i^{n-2}\al_j\ne^{-J(q)\frac{\al_j}{\hb}}\frac{\bM\F(\al_j/\hb,q)}{I_1(q)}\right]
~&\in\K^{(i,j)}_{\hb}\big[\big[q\big]\big]\,,\!\footnotemark\\
\label{Z2pt_lmm_e}
n\al_i^{n-1}+2\big(\hb_1\!+\!\hb_2\big)\hb_1\hb_2\wt\cZ_{ii}^*(\hb_1,\hb_2,Q)
\hspace{1.6in}& \notag\\
-\al_i^{n-1}\ne^{-J(q)\al_i\left(\frac{1}{h_1}+\frac{1}{\hb_2}\right)}
\bF(\al_i/\hb_1,\al_i/\hb_2,q)
\in\cI\cdot&\iQ_{\hb_1,\hb_2}\big[\big[q\big]\big],
\end{alignat}\footnotetext{Note that \e_ref{Z1pt2_lmm_e}
implies that 
\begin{equation*}\begin{split}
&\al_i^{n-2}\al_j+\hb\cZ_{ji}^*(\hb,Q) 
-\al_i^{n-2}\al_j\ne^{-J(q)\frac{\al_j}{\hb}}\frac{\bM\F(\al_j/\hb,q)}{I_1(q)}\\
&\hspace{1.8in}
\in  \Span_{\Q}\left\{\al_i^{n-2}\cI\cdot \ti\Q_j[\al]^{S_{n-1}}_{\hb},\{1,\al_i,\ldots,\al_i^{n-3},\al_i^{n-1}\}\ti\Q_j[\al]^{S_{n-1}}_{\hb}\right\}\big[\big[q\big]\big]\,;
\end{split}\end{equation*}
this is the only information about $\cZ^*_{ji}(Q)$ 
used in computing the non-equivariant part of $\ti\cA_{ij}(Q)$
in Section~\ref{mainthmpf_subs}.}
where
\begin{equation*}\begin{split}
\bF(w_1,w_2,q) &=\sum_{p=0}^{n-1-l}\frac{\bM^p\F(w_1,q)}{I_p(q)}
\frac{\bM^{n-1-l-p}\F(w_2,q)}{I_{n-1-l-p}(q)}\\
&\qquad +\sum_{p=1}^{l}\frac{\bM^{n-1-l+p}\F(w_1,q)}{I_{n-1-l+p}(q)}
\frac{\bM^{n-p}\F(w_2,q)}{I_{n-p}(q)}
\end{split}\end{equation*}
and $Q$ and $q$ are related by the mirror map \e_ref{mirmap_e}.
\end{lmm}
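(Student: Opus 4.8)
The plan is to obtain all three congruences from the explicit closed-form formulas for the genus~0 two-point generating functions in \cite[Theorem~11.8]{Gi} and \cite[Theorem~4]{bcov0_ci}, following the $l\!=\!1$ template in \cite[Section~3]{bcov1}, and then to repackage the resulting rational functions of $\al_i/\hb$ using the hypergeometric series $\F$ of \e_ref{F0def_e} rather than $\ti\F$. First I would recall the mirror formula: if $\cY^*(\hb,Q)$ denotes the one-point genus~0 descendent generating function with the Euler class $\E(\V_0')$ inserted, then $\cY^*(\hb,Q)$, after the mirror map $q\!\mapsto\!Q\!=\!q\ne^{J(q)}$, is a simple renormalization of $\ti\F(\al_i/\hb,q)$ restricted to $P_i$ — this is exactly the input already used at the end of the proof of Lemma~\ref{g1diff_lmm}, where $\ne^{-J(q)w}\ti\F(w,q)/I_0(q)$ appears. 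The first step is therefore to translate the Givental/\cite{bcov0_ci} normalization of $\cZ_i^*$ into the statement that $1\!+\!\cZ_i^*(\hb,Q)$ equals $\ne^{-J(q)\al_i/\hb}\ti\F(\al_i/\hb,q)/I_0(q)$ up to terms supported on the ideal~$\cI$; here one uses $x\!\mapsto\!\al_i$ under restriction to~$P_i$ and the relation $\prod_k(\al_i\!-\!\al_k)\!\equiv\!0$, which is what produces the factor $(\al_i\!+\!\hb)^n-\al_i^n$ multiplying the bracket and the membership in $\cI\cdot q\cS_{i,\hb}[[q]]$.

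The second step is to replace $\ti\F$ by $\F$. By \e_ref{Iprp_e} we have $I_p\!=\!\bM^p\F(0,\cdot)\!=\!\bM^p\ti\F(0,\cdot)$, so the two series have the same "constant terms"; moreover, \e_ref{F0def} and \e_ref{F0def_e} differ only in that the denominator $\prod_{r=1}^d(w\!+\!r)^n$ is replaced by $\prod_{r=1}^d[(w\!+\!r)^n\!-\!w^n]$. When $w\!=\!\al_i/\hb$, the discrepancy between $(w\!+\!r)^n$ and $(w\!+\!r)^n\!-\!w^n$ is $w^n\!=\!(\al_i/\hb)^n$, and after clearing denominators and restricting to $P_i$ (using $\prod_k(x-\al_k)\!=\!0$, i.e.\ $x^n\!\equiv\!\sum_{p<n}(-1)^{\ldots}\si_p x^{n-p}$ with the $\si_p$'s generating~$\cI$) this discrepancy is absorbed into $\cI\cdot q\cS_{i,\hb}[[q]]$. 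This is the key point where the advantage of $\F$ over $\ti\F$ — namely Lemma~\ref{HGper_lmm}, $\bM^n\F\!=\!\F$, which is needed later — is bought at no cost modulo~$\cI$. So \e_ref{Z1pt_lmm_e} follows.

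For \e_ref{Z1pt2_lmm_e} and \e_ref{Z2pt_lmm_e} I would proceed in the same spirit but now track the $\ev_2^*\phi_j$ (resp.\ the second descendent insertion). For $\cZ_{ji}^*$ one uses the genus~0 "double" mirror formula expressing the two-point function in terms of products $\bM^p\F(\al_j/\hb,\cdot)/I_p$; the leading behavior in~$\al_i$ comes out as $\al_i^{n-2}\al_j$ because $\ev_1^*\phi_i|_{P_j}\!=\!\de_{ij}\prod_{k\neq i}(\al_i-\al_k)$ has degree $n-1$, and the $\bM$-derivative shifts the index $p$ from~$0$ to~$1$, giving the $\bM\F/I_1$ in the statement; the error now lands in $\K^{(i,j)}_\hb[[q]]$ because of the mixed symmetry (symmetric in $\al_k$, $k\!\neq\!j$, but only polynomial of controlled degree in~$\al_i$), and the footnote's weaker membership is an immediate consequence. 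For the two-point function $\wt\cZ_{ii}^*$ I would use the standard factorization of the genus~0 two-pointed descendent series along the diagonal basis — this is precisely where the kernel $\bF(w_1,w_2,q)\!=\!\sum_p \bM^p\F(w_1)\bM^{n-1-l-p}\F(w_2)/(I_pI_{n-1-l-p})+\ldots$ arises, the two sums reflecting the splitting of the $n$-dimensional cohomology of the fiber into the $n-l$ "geometric" classes $\nH^0,\ldots,\nH^{n-1-l}$ of $X_\a$ and the $l$ "extra" classes that die after multiplying by $e(\V_0)$; the pairing $\bM^p\F\otimes\bM^{n-1-l-p}\F$ is dictated by Poincaré duality on $X_\a$ together with the reflection symmetry $I_p\!=\!I_{n-l-p}$ of \e_ref{HG1e3}, and the second sum by $\bM^n\F\!=\!\F$ of Lemma~\ref{HGper_lmm}. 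The factor $n\al_i^{n-1}+2(\hb_1\!+\!\hb_2)\hb_1\hb_2\wt\cZ_{ii}^*$ and the landing in $\cI\cdot\iQ_{\hb_1,\hb_2}[[q]]$ are produced exactly as in \cite[Section~3]{bcov1} by the same "clear denominators, restrict to $P_i$, push the ambiguity into~$\cI$" mechanism.

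The main obstacle I anticipate is bookkeeping in the two-point case \e_ref{Z2pt_lmm_e}: correctly identifying the kernel $\bF$ — in particular getting the index ranges $p\!=\!0,\ldots,n-1-l$ and $p\!=\!1,\ldots,l$ right and verifying that the "extra" $l$-dimensional summand is the correct one — requires carefully combining Givental's/\cite{bcov0_ci}'s explicit two-point formula, the splitting of $e(\V_0)\!=\!\lr\a\, \ev_1^*\nH^l\prod\pi_i^*e(\V_{(1)}')$ from \e_ref{vbsplit_e}, Poincaré duality on $X_\a$, and the $\F$-versus-$\ti\F$ replacement and periodicity $\bM^n\F\!=\!\F$ simultaneously. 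Everything else is the routine "reduce modulo the ideal generated by $\si_1,\ldots,\si_{n-1}$" manipulation already carried out for $l\!=\!1$ in \cite{bcov1}, with $n\al_i\!+\!\hb$ replaced throughout by $\prod_{r=1}^l(a_r\al_i\!+\!\hb)$ and $n\al_i$ by $\lr\a\al_i^l$, as flagged after Proposition~\ref{equivred_prp}; I would state that these substitutions are the only changes and omit the repetition of the argument.
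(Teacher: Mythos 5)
Your overall strategy is the paper's: all three congruences are read off from the closed formulas of \cite[Theorem~11.8]{Gi} and \cite[Theorem~4]{bcov0_ci} and then reduced modulo the ideal $\cI=(\si_1,\ldots,\si_{n-1})$. However, your second step --- first identifying $1+\cZ_i^*$ with $\ne^{-J(q)\al_i/\hb}\ti\F(\al_i/\hb,q)/I_0(q)$ modulo $\cI$ and then ``absorbing'' the discrepancy between $\ti\F$ and $\F$ into $\cI$ --- contains a genuine error. The difference of the two series at $w=\al_i/\hb$ is a nonzero element of $\Q(\al_i,\hb)\big[\big[q\big]\big]$, and such an element does not lie in $\cI\cdot q\cS_{i,\hb}\big[\big[q\big]\big]$. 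Concretely, the relation $\prod_{k}(\al_i-\al_k)=0$ gives $\al_i^n\equiv(-1)^{n+1}\si_n$ modulo $\cI\cdot\Q[\al_1,\ldots,\al_n]$, and $\si_n$ is \emph{not} among the generators of $\cI$; your congruence $x^n\equiv\sum_{p<n}(\pm)\si_px^{n-p}$ silently drops the $\si_n$-term. The correct mechanism is the reverse of what you describe: the denominators $\prod_k(\al_i-\al_k+r\hb)$ occurring in Givental's series reduce modulo $\cI$ to $(\al_i+r\hb)^n-\al_i^n$ precisely \emph{because} of this surviving $\si_n\equiv(-1)^{n+1}\al_i^n$ term, so it is $\F$, with its denominator $\prod_r[(w+r)^n-w^n]$, that is directly the non-equivariant limit; the series $\ti\F$ never enters and nothing gets absorbed. (The paper makes this transparent by inserting the term $-\prod_k(x-\al_k)$, which vanishes under $x\mapsto\al_i$, into the denominator of the Givental series $\Y$, so that $\Y$ becomes independent of $\si_n$ altogether.)

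Two further points are asserted but not established. First, the prefactor $\left[(\al_i+\hb)^n-\al_i^n\right]$ is not merely a by-product of ``clearing denominators'': it is needed to cancel the simple zeros that the denominators of $\Y(\hb,\al_i,q)-\F(\al_i/\hb,q)$ acquire at $\hb=-a_k\al_i$ when $a_k=2$ and $n$ is even, without which the expression would fail to lie in $\cS_{i,\hb}$ (whose elements are constrained at $\hb=-a_k\al_i$); this is also exactly where the standing assumption $a_k\neq1$ is used. Second, for \e_ref{Z2pt_lmm_e} you correctly flag the identification of $\bF$ as the main difficulty but then offer only a Poincar\'e-duality heuristic; the actual source of the second sum in $\bF$ is the family of linear relations $\sum_{s_1+s_2=k}(-1)^{s_1}\si_{s_1}\Y_{s_2}(\hb,\al_i,q)=0$ for $n-l\le k\le n-1$ supplied by \cite[Theorem~4]{bcov0_ci}, which yield $\Y_{-p}\equiv\al_i^{-n}\Y_{n-p}$ modulo $\cI$ and thereby convert the negative-index factors $\Y_{r-l}$ appearing in the two-point formula into the terms $\bM^{n-p}\F/I_{n-p}$. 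Without these two ingredients the proposal does not close.
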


\begin{proof} By \cite[Theorem~11.8]{Gi},
\BE{Gi_e} 1\!+\!\cZ_i^*(\hb,Q)
=\ne^{\frac{-J(q)\al_i+C(q)\si_1}{\hb}}\frac{\Y(\hb,\al_i,q)}{I_0(q)}\EE
for some $C\!\in\!q\Q\big[\big[q\big]\big]$ and
$$\Y(\hb,x,q)\equiv \sum_{d=0}^{\i}q^d
\frac{\prod\limits_{k=1}^l\prod\limits_{r=1}^{a_kd}(a_kx\!+\!r\hb)}
{\prod\limits_{r=1}^d\left[\prod\limits_{k=1}^n(x\!-\!\al_k\!+\!r\hb)
-\prod\limits_{k=1}^n(x\!-\!\al_k)\right]}\,.$$
There is no term $\prod\limits_{k=1}^n(x\!-\!\al_k)$ in
the generating function used in place of $\Y$ in~\cite{Gi},
but putting it does not effect the validity of \cite[Theorem~11.8]{Gi}
as it vanishes under all evaluations $x\!\lra\!\al_i$.
On the other hand, with this extra term in place $\Y$ becomes a function of
$\hb$, $x$, $\si_1,\ldots,\si_{n-1}$, and {\it not}~$\si_n$.
Since all denominators in $\Y(\hb,\al_i,q)$ are products of 
$\al_i\!-\!\al_k\!+\!r\hb$ with $r\!\in\!\Z$, 
$$\Y(\hb,\al_i,q)-\F(\al_i/\hb,q)\in\cI\cdot\hQ\big[\big[q\big]\big].$$
If $a_k\!\neq\!1,2$ or $n$ is odd, the denominators in the above expression 
do not vanish at $\hb=-a_k\al_i$, and so the difference lies in 
$\cI\cdot q\cS_{i,\hb}\big[\big[q\big]\big]$.
Otherwise, the denominators have a simple zero at $\hb=-a_k\al_i$ 
(in $q$-degree at least 2 if $a_k\!=\!1$).
If $a_k\!=\!2$ and $n$ is even, the factor $[(\al_i+\hb)^n-\al_i^n]$ has a zero
at $\hb=-a_k\al_i$ as well, and so 
$$\left[(\al_i+\hb)^n-\al_i^n\right]\left[\Y(\hb,\al_i,q)-\F(\al_i/\hb,q)\right]
~\in \cI\cdot q\cS_{i,\hb}\big[\big[q\big]\big].$$
The case $a_k\!=\!1$ is excluded by the assumption on~$\a$ 
in Section~\ref{mirsym_sec}.
Thus, \e_ref{Z1pt_lmm_e} follows from \e_ref{Gi_e}.

By \e_ref{phidfn_e}, \cite[Theorem~4]{bcov0_ci}, and 
the same reasoning as in the previous paragraph,
\BE{Po_e1}\begin{split}
&\sum_{\begin{subarray}{c}p+r+s=n-1\\ p,r,s\ge0\end{subarray}}
\!\!\!\!\!\!\!(-1)^s\si_s\al_i^p\al_j^r
~+\hb\cZ_{ji}^*(\hb,Q)
 =\ne^{\frac{-J(q)\al_j+C(q)\si_1}{\hb}}
\!\!\!\!\!\!\sum_{\begin{subarray}{c}p+r+s=n-1\\ p,r,s\ge0\end{subarray}}
\!\!\!\!\!\!\!(-1)^s\si_s\al_i^p\Y_r(\hb,\al_j,q) 
\end{split}\EE
where $\Y_r\in\Q_{\al}(x)\big[\big[q\big]\big]$ is a power series such that 
\BE{Yr_e}\begin{split}\Y_r(\hb,\al_j,q)\in\cS_{j,\hb}\big[\big[q\big]\big]
\qquad\hbox{and}\qquad\qquad\qquad\qquad\qquad\\
\qquad\qquad\left[(\al_j+\hb)^n-\al_j^n\right]\left[Y_r(\hb,\al_j,q)-\al_j^r\frac{\bM^r\F(\al_j/\hb,q)}{I_r(q)}\right]
\in \cI\cdot q\cS_{j,\hb}\big[\big[q\big]\big].
~\footnotemark\end{split}\EE
\footnotetext{In the notation of~\cite{bcov0_ci}, 
$\Y_r(\hb,x,q)=\cY_r(\hb,x,q)/x^l$.}
The claim~\e_ref{Z1pt2_lmm_e} thus follows from \e_ref{Po_e1}.

Finally, by \cite[Theorem~4]{bcov0_ci},
\begin{gather}\label{Po_e}
\begin{split}
&\sum_{\begin{subarray}{c}p+r+s=n-1\\ p,r,s\ge0\end{subarray}}
\!\!\!\!\!\!\!\!(-1)^s\si_s\al_i^{p+r} ~+
2\big(\hb_1\!+\!\hb_2\big)\hb_1\hb_2\wt\cZ_{ii}^*(\hb_1,\hb_2,Q)\\
&\hspace{.5in}
=\ne^{(-J(q)\al_i+C(q)\si_1)\left(\frac{1}{\hb_1}+\frac{1}{\hb_2}\right)}
\al_i^l \!\!\!\!\!\!
\sum_{\begin{subarray}{c}p+r+s=n-1\\ p,r,s\ge0\end{subarray}}
\!\!\!(-1)^s\si_s\Y_p(\hb_1,\al_i,q)\Y_{r-l}(\hb_2,\al_i,q),
\end{split}\\
\label{Po_e2}
\sum_{\begin{subarray}{c}s_1+s_2=k\\ 0\leq s_1\leq n\end{subarray}}\!\!\!
(-1)^{s_1}\si_{s_1}\Y_{s_2}(\hb,\al_i,q)=0  \qquad\hbox{if}\quad n\!-\!l\leq k\leq n\!-\!1,
\end{gather}
where $\Y_r\in\Q_{\al}(x)\big[\big[q\big]\big]$ is a power series such that 
\BE{Yr_e2} \Y_r(\hb,\al_i,q)\in\ti\Q_i[\al]_{\hb}^{S_{n-1}}\big[\big[q\big]\big].\EE
By \e_ref{Po_e2} and \e_ref{Yr_e2}, 
$$\Y_{-p}(\hb,\al_i,q)-\al_i^{-n}\Y_{n-p}(\hb,\al_i,q)
\in \cI\cdot\ti\Q_i[\al]_{\hb}^{S_{n-1}}\big[\big[q\big]\big]
\qquad\forall~p=1,2,\ldots,l.$$
Thus, \e_ref{Z2pt_lmm_e} follows from \e_ref{Po_e}, \e_ref{Yr_e2},
and \e_ref{Yr_e}.
\end{proof}

\subsection{Proof of Theorem~\ref{main_thm}}
\label{mainthmpf_subs}

We will use Lemmas~\ref{algebr_lmm1}-\ref{Z_lmm}
to extract the coefficients of $\al_i^{n-2}$  from the expressions of 
Proposition~\ref{equivred_prp}  modulo~$\K_i\big[\big[q\big]\big]$.
In the notation of Theorem~\ref{main_thm} and Proposition~\ref{equivred_prp}, 
\begin{alignat}{1}\label{Acoeff_e}
Q\frac{\nd\ti{A}(q)}{\nd Q} &= ~\hbox{the coefficient of}~ 
\al_i^{n-2} ~\hbox{in}~ \left(\cA_i(Q)+\sum_{j=1}^n\ti{\cA_{ij}}(Q)\right)
~\hbox{modulo}~ \K_i\big[\big[q\big]\big],\\
\label{Bcoeff_e}
Q\frac{\nd\ti{B}(q)}{\nd Q} &= ~\hbox{the coefficient of}~
\al_i^{n-2} ~\hbox{in}~ \left(\cB_i(Q)+\sum_{j=1}^n\ti{\cB_{ij}}(Q)\right)
~\hbox{modulo}~ \K_i\big[\big[q\big]\big].
\end{alignat}
Let $D\equiv q\frac{\nd}{\nd q}$ as in Section~\ref{hg_sec}.
We begin by computing residues of the transforms of~$\F$ that appear 
in the description of the generating functions in Lemma~\ref{Z_lmm}.

\begin{lmm}\label{res_lmm}
With $\mu,\Phi_0,L,\bA\in\Q\big[\big[q\big]\big]$ given by \e_ref{asym}, \e_ref{Ldfn_e0},
and \e_ref{main_thm_e1},
\begin{alignat}{1}\label{Zijres_e}
\fR_{\hb=0}\left\{\hb^{-1}\ne^{-\mu(q)\frac{\al_j}{\hb}}\bM\F(\al_j/\hb,q)\right\}
&=\frac{L(q)\Phi_0(q)}{I_0(q)}\,,\\
\label{Ztires_e}
\fR_{h_1=0}\fR_{h_2=0}\left\{
\frac{\ne^{-\mu(q)\al_i(\hb_1^{-1}+\hb_2^{-1})}}{\hb_1\hb_2(\hb_1\!+\!\hb_2)}
\bF(\al_i/\hb_1,\al_i/\hb_2,q)\right\}&= \al_i^{-1}L(q)^{-1}D\bA(q).
\end{alignat}
\end{lmm}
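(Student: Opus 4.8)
The plan is to reduce both identities to reading off prescribed coefficients from the asymptotic expansions \e_ref{asym_p} of the transforms $\F_p=\bM^p\F$, via the change of variable $w=\al_j/\hb$ in the first case and $w_1=\al_i/\hb_1$, $w_2=\al_i/\hb_2$ in the second, and then to feed in the formulas for the leading Laurent coefficients $\Phi_{p,0}$, $\Phi_{p,1}$ obtained in Sections~\ref{muPhi0_subs}--\ref{Phi1_subs} together with the relations \e_ref{HG1e0}--\e_ref{HG1e2} among the $I_p$. The point worth stressing at the outset is that, coefficient by coefficient in $q$, the expansion \e_ref{asym_p} is a genuine Laurent expansion of the rational function $[\F_p]_{q^d}$ at $w=\i$: in each $q$-degree only finitely many positive powers of $w$ occur, all contributed by $\ne^{\mu(q)w}$. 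Hence substituting $w=\al_j/\hb$ produces the actual Laurent expansion at $\hb=0$, and residues may be extracted termwise.

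For \e_ref{Zijres_e}, applying \e_ref{asym_p} with $p=1$ and setting $w=\al_j/\hb$ gives
$$\hb^{-1}\ne^{-\mu(q)\al_j/\hb}\bM\F(\al_j/\hb,q)=\sum_{s\ge0}\Phi_{1,s}(q)\,\al_j^{-s}\,\hb^{s-1},$$
whose residue at $\hb=0$ is the $s=0$ term $\Phi_{1,0}(q)$. By \e_ref{Phi0_e} with $p=1$ and the first identity of \e_ref{muPhi0_e} ($1+D\mu=L$), $\Phi_{1,0}=\frac{1+D\mu}{I_0}\Phi_0=\frac{L\Phi_0}{I_0}$, which is the claim.

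For \e_ref{Ztires_e}, first match indices and use periodicity (Lemma~\ref{HGper_lmm}: $\bM^n\F=\F$, hence $I_{p+n}=I_p$ and $\Phi_{p+n,s}=\Phi_{p,s}$) to rewrite
$$\bF(w_1,w_2,q)=\sum_{p=0}^{n-1}\frac{\bM^p\F(w_1,q)}{I_p(q)}\cdot\frac{\bM^{\,n-1-l-p}\F(w_2,q)}{I_{\,n-1-l-p}(q)},$$
the index $n-1-l-p$ read modulo $n$. Substituting $w_k=\al_i/\hb_k$ and using \e_ref{asym_p}, $\ne^{-\mu(q)\al_i/\hb_k}\bM^p\F(\al_i/\hb_k,q)=\sum_{s\ge0}\Phi_{p,s}(q)\al_i^{-s}\hb_k^{\,s}$. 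In particular the $w_2$-factor $P(\hb_2)\equiv\ne^{-\mu(q)\al_i/\hb_2}\bM^{\,n-1-l-p}\F(\al_i/\hb_2,q)$ is regular at $\hb_2=0$ with $P(0)=\Phi_{\,n-1-l-p,0}(q)$, so $\fR_{\hb_2=0}\{\hb_2^{-1}(\hb_1+\hb_2)^{-1}P(\hb_2)\}=\hb_1^{-1}P(0)$; after the $\hb_2$-residue the integrand becomes $\hb_1^{-2}\ne^{-\mu(q)\al_i/\hb_1}\sum_p\frac{\Phi_{\,n-1-l-p,0}}{I_pI_{\,n-1-l-p}}\bM^p\F(\al_i/\hb_1,q)$. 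The $\hb_1$-residue then reads off the coefficient of $\hb_1^{1}$ in $\ne^{-\mu(q)\al_i/\hb_1}\bM^p\F(\al_i/\hb_1,q)$, namely $\Phi_{p,1}(q)\al_i^{-1}$, so the iterated residue equals $\al_i^{-1}\sum_{p=0}^{n-1}\frac{\Phi_{p,1}\Phi_{\,n-1-l-p,0}}{I_pI_{\,n-1-l-p}}$. Comparing with the target, it remains to prove
$$\sum_{p=0}^{n-1}\frac{\Phi_{p,1}(q)\,\Phi_{\,n-1-l-p,0}(q)}{I_p(q)\,I_{\,n-1-l-p}(q)}=L(q)^{-1}D\bA(q).$$

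This algebraic identity is the main obstacle. I would substitute the closed forms $\Phi_{p,0}=H_p\Phi_0$ with $H_p=L^p/(I_0\cdots I_{p-1})$, and $\Phi_{p,1}=H_p\big(\Phi_1+p\frac{D\Phi_0}{L}-p\frac{\Phi_0DL}{L^2}+\frac{\Phi_0}{L}\sum_{r=1}^p\frac{DH_r}{H_r}\big)$ from Section~\ref{muPhi0_subs}, together with \e_ref{Phi1_e} for $\Phi_1$, and then collapse the left side using the reflection symmetry \e_ref{HG1e3} (so that $I_{\,n-1-l-p}=I_{p+1}$ for $p\le n-1-l$), the product relations \e_ref{HG1e1}--\e_ref{HG1e2}, and the normalizations $H_0=H_n=1$, $\prod_{r=1}^nH_r=L^{-n(l-1)/2}$ of \e_ref{HpProperties}; the indices $p\ge n-l$ are degenerate ($I_{n-l}=I_0$, and several of the $I$'s equal $1$ by \e_ref{HG1e0}) and are folded in by hand. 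The right side $D\bA(q)$ is read off \e_ref{main_thm_e1} using $D\mu=L-1$ and $D\log(1-\a^{\a}q)=-(L^n-1)$. This is the general-$l$ analogue of the computation carried out for $l=1$ in \cite[Section~3]{bcov1}, with the hypergeometric input of \cite{ZaZ} replaced by Propositions~\ref{HG_thm2}--\ref{HG_thm1}; I expect it to go through with only bookkeeping changes.
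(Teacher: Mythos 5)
Your reduction of both residues to coefficient extraction from the asymptotic expansions \e_ref{asym_p} is exactly the paper's route: \e_ref{Zijres_e} is the $s\!=\!0$ coefficient $\Phi_{1,0}=L\Phi_0/I_0$ of $\bM\F$, and the iterated residue in \e_ref{Ztires_e} does collapse, pair by pair, to $\al_i^{-1}\sum\Phi_{p,1}(q)\Phi_{r,0}(q)$ over the index pairs $(p,r)$ occurring in $\bF$ (your single-sum rewriting of $\bF$ via $\bM^n\F=\F$ is legitimate, and your preliminary remark that \e_ref{asym_p} is a genuine Laurent expansion in each $q$-degree is the right justification for termwise residue extraction). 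The only place you stop short is the closing identity $\sum\Phi_{p,1}\Phi_{r,0}/(I_pI_r)=L^{-1}D\bA$, which is the actual content of \e_ref{Ztires_e}: you name the correct ingredients but assert rather than verify it. For the record, the paper closes this step by noting that $H_pH_r=I_pI_r/L^{l+1}$ for every pair $(p,r)$ appearing in $\bF$ (a consequence of \e_ref{HG1e0}--\e_ref{HG1e1}), so that the sum becomes
\begin{equation*}
L^{-\frac{l+1}{2}}\left(n\Phi_1+\binom{n}{2}D\big(L^{\frac{l-1}{2}}\big)
+L^{\frac{l-1}{2}}\sum_{r=1}^{n-1}(n\!-\!r)\frac{DH_r}{H_r}\right),
\end{equation*}
and then by evaluating $\sum_{r=1}^{n-1}(n\!-\!r)\,DH_r/H_r$ explicitly as $\binom{n-l+1}{3}\frac{DL}{L}-\frac{(l-1)l(3n-l-1)}{6}\frac{DL}{L}-\sum_{p=0}^{n-l}\binom{n-l-p}{2}\frac{DI_p}{I_p}$ and substituting \e_ref{Phi1_e}, $DL=L(L^n\!-\!1)/n$, and $D\mu=L\!-\!1$; this reproduces $L^{-1}D\bA$ with $\bA$ as in \e_ref{main_thm_e1}. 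So your plan is sound and goes through, but as written the proof of \e_ref{Ztires_e} is incomplete without this computation.
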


\begin{proof} By Lemmas~\ref{HGper_lmm} and~\ref{reg_lmm}, \e_ref{asym}, \e_ref{Mdfn}, and the first statement 
in \e_ref{muPhi0_e}, 
\BE{asym_p2} 
\bM^p\F(w,q)\sim\ne^{\mu(q)w}\sum_{s=0}^{\i}\Phi_{p,s}(q)\,w^{-s} \qquad(w\to\i), \EE
where
$$ \Phi_{0,s}=\Phi_s\,, \qquad
\Phi_{p+1,s}=\frac{L}{I_p}\,\Phi_{p,s}\,+\, 
\begin{cases} D\left(\frac{\Phi_{p,s-1}}{I_p}\right),&\hbox{if}~s\ge1,\\ 
 0,&\hbox{if}~s=0. \end{cases}$$
The $s\!=\!0,1$ cases of the recursion give
\BE{expind_e2}  
\Phi_{p,0}=H_p\Phi_0, \qquad 
\Phi_{p,1}=H_p\left(\Phi_1+pD\bigg(\frac{\Phi_0}{L}\bigg)+
\frac{\Phi_0}{L}\sum_{r=1}^p\frac{DH_r}{H_r}\right)\EE
where $H_p=L^p/(I_0\ldots I_{p-1})$
as in Section~\ref{muPhi0_subs}.
The $p\!=\!1$ case of the first identity above gives~\e_ref{Zijres_e}.

On the other hand, for all $p,r\!\ge\!0$
\begin{equation*}\begin{split}
&\fR_{h_1=0}\fR_{h_2=0}\left\{
\frac{\ne^{-\mu(q)\al_i(\hb_1^{-1}+\hb_2^{-1})}}{\hb_1\hb_2(\hb_1\!+\!\hb_2)}
\bM^p\F(\al_i/\hb_1,q)\bM^r\F(\al_i/\hb_2,q)\right\}\\
&\quad =\fR_{h_1=0}\left\{\frac{\ne^{-\mu(q)\al_i/\hb_1}}{\hb_1^2}
\bM^p\F(\al_i/\hb_1,q)\cdot\Phi_{r,0}(q)\right\}
=\al_i^{-1}\Phi_{p,1}(q)\Phi_{r,0}(q).
\end{split}\end{equation*}
By \e_ref{HG1e0}-\e_ref{HG1e1},
\begin{alignat*}{2}
H_pH_{n-1-l-p}=\frac{I_pI_{n-1-l-p}}{L^{l+1}} &\qquad &\hbox{if}~~
 0\le p\le n\!-\!1\!-\!l, \\
H_{n-1-l+p}H_{n-p}=\frac{I_{n-1-l+p}I_{n-p}}{L^{l+1}} &\qquad& \hbox{if}~~
 1\le p\le l.
\end{alignat*}
Thus, by~\e_ref{expind_e2} and the second statement in \e_ref{muPhi0_e},
\begin{equation*}\begin{split}
&\fR_{h_1=0}\fR_{h_2=0}\left\{
\frac{\ne^{-\mu(q)\al_i(\hb_1^{-1}+\hb_2^{-1})}}{\hb_1\hb_2(\hb_1\!+\!\hb_2)}
\bF(\al_i/\hb_1,\al_i/\hb_2,q)\right\}\\
&\hspace{1in}
=\al_i^{-1}L^{-\frac{l+1}{2}}\Bigg(n\Phi_1+\binom{n}{2}D\big(L^{\frac{l-1}{2}}\big)
+L^{\frac{l-1}{2}}\sum_{r=1}^{n-1}(n\!-\!r)\frac{DH_r}{H_r}\Bigg)
\end{split}\end{equation*}
By \e_ref{HG1e1} and \e_ref{HG1e2},
\begin{equation*}\begin{split}
&\sum_{r=1}^{n-1}(n\!-\!r)\frac{DH_r}{H_r}
=\sum_{r=1}^{n-l}(n\!-\!l\!-\!r)\frac{DH_r}{H_r}
+l\sum_{r=1}^{n-l}\frac{DH_r}{H_r}
+\sum_{r=1}^{l-1}(l\!-\!r)\frac{DH_{n-l+r}}{H_{n-l+r}}\\
&\qquad
=\binom{n\!-\!l\!+1}{3}\frac{DL}{L}
-\sum_{r=1}^{n-l}\sum_{p=0}^{r-1}(n\!-\!l\!-\!r)\frac{DI_p}{I_p}
-\frac{(l\!-\!1)l(n\!-\!l)}{2}\frac{DL}{L}
-\frac{(l\!-\!1)l(2l\!-\!1)}{6}\frac{DL}{L}\\
&\qquad
=\binom{n\!-\!l\!+1}{3}\frac{DL}{L}-\frac{(l\!-\!1)l(3n\!-\!l\!-\!1)}{6}\frac{DL}{L}
-\sum_{p=0}^{n-l}\binom{n\!-\!l\!-\!p}{2}\frac{DI_p}{I_p}\,.
\end{split}\end{equation*}
The second identity in the lemma follows from the last two equations
along with \e_ref{muPhi0_e}, \e_ref{Phi1_e}, $DL\!=\!L(L^n\!-\!1)/n$,
and \e_ref{Ldfn_e0}.
\end{proof}

We will now compute \e_ref{Acoeff_e}.
By Lemma~\ref{algprop_lmm}, \e_ref{Z1pt_lmm_e}, \e_ref{etadfn_e}, \e_ref{Phi0dfn_e},  
and~\e_ref{asym}, 
\begin{alignat}{1} \label{etadiff_e}
\eta_i(Q)-\big(\mu(q)-J(q)\big)\al_i ~&\in  
\cI\cdot q\iQ\big[\big[q\big]\big]\,,\\
\label{Phi0diff_e}
\Phi_0(\al_i,Q)-\frac{\Phi_0(q)}{I_0(q)}
~&\in  \cI\cdot q\iQ\big[\big[q\big]\big].
\end{alignat}
Thus, by Proposition~\ref{equivred_prp}, Lemma~\ref{algprop_lmm},
\e_ref{Z2pt_lmm_e}, and~\e_ref{Ztires_e},
\BE{Adiff_e}
\cA_i(Q)-\al_i^{n-2}\frac{I_0(q)D\bA(q)}{2L(q)\Phi_0(q)} 
\in\cI\cdot\iQ\big[\big[q\big]\big]. \EE
By Proposition~\ref{equivred_prp}, Lemma~\ref{algprop_lmm}, 
\e_ref{Z1pt2_lmm_e}, \e_ref{Zijres_e}, \e_ref{etadiff_e} and \e_ref{Adiff_e}, 
\BE{tiAdiff_e1}
\sum_{j=1}^n\ti\cA_{ij}(Q)
-\sum_{j=1}^n\left(\frac{\al_j^{n-2}I_0(q)D\bA(q)/(2L(q)\Phi_0(q))}
    {\prod\limits_{k\neq j}(\al_j\!-\!\al_k)}  
    \left[\frac{L(q)\Phi_0(q)}{I_0(q)I_1(q)}-1\right]\al_i^{n-2}\al_j\right)
\in\K_i\big[\big[q\big]\big].\EE
By the Residue Theorem on~$S^2$,
\BE{resth_e}
\sum_{j=1}^n\frac{\al_j^{n-1}}{\prod\limits_{k\neq j}(\al_j\!-\!\al_k)} 
=\sum_{j=1}^n\fR_{z=\al_j}\left\{\frac{z^{n-1}}{\prod\limits_{k=1}^n(z\!-\!\al_k)}\right\}
=-\fR_{z=\i}\left\{\frac{z^{n-1}}{\prod\limits_{k=1}^n(z\!-\!\al_k)}\right\}
=1.\EE
Thus, by~\e_ref{Adiff_e} and \e_ref{tiAdiff_e1},
$$\cA_i(Q)+\sum_{j=1}^n\ti\cA_{ij}(Q)-\al_i^{n-2}\frac{D\bA(q)}{2I_1(q)}
\in\K_i\big[\big[q\big]\big].$$
Since $\frac{1}{I_1(q)}D=Q\frac{\nd}{\nd Q}$, this proves 
the claim stated in the sentence after Theorem~\ref{main_thm}.

We next compute \e_ref{Bcoeff_e}. Let
\BE{tibB_e} \ti\bB(q)\equiv -\frac{24}{\lr{\a}}\ti{B}(q)\EE
with $\ti{B}(q)$ as in Theorem~\ref{main_thm} and
\begin{equation*}\begin{split}
\al_j^{n-2-l}B(q)&\equiv
\fR_{\hb=0,\i,-\a\al_j}\left\{
\frac{(\al_j\!+\!\hb)^n-\al_j^n}{\hb^3\prod\limits_{r=1}^l(a_r\al_j\!+\!\hb)}
\frac{\ne^{-J(q)\frac{\al_j}{\hb}}\F(\al_j/\hb,q)/I_0(q)-1}
{\ne^{-J(q)\frac{\al_j}{\hb}}\F(\al_j/\hb,q)/I_0(q)}\right\}
\end{split}\end{equation*}

\begin{lmm}\label{res_lmm2}
With notation as above,
\begin{alignat}{1}\label{Bres_e1}
\fR_{\hb=0,\i,-\a\al_j}\left\{
\frac{(\al_j\!+\!\hb)^n-\al_j^n}{\hb^3\prod\limits_{r=1}^l(a_r\al_j\!+\!\hb)}
\frac{\ne^{-J(q)\frac{\al_j}{\hb}}\frac{\bM\F(\al_j/\hb,q)}{I_1(q)}-1}
{\ne^{-J(q)\frac{\al_j}{\hb}}\F(\al_j/\hb,q)/I_0(q)}\right\}
&=\al_j^{n-2-l}\left(Q\frac{\nd\ti\bB}{\nd Q}(q)+B(q)\right);\\
\label{Bres_e2}
\fR_{\hb=0,\i,-\a\al_j}\left\{
\frac{(\al_j\!+\!\hb)^n-\al_j^n}{\hb^2\prod\limits_{r=1}^l(a_r\al_j\!+\!\hb)}
\log\left(\ne^{-J(q)\frac{\al_j}{\hb}}\frac{\F(\al_j/\hb,q)}{I_0(q)}\right)\right\}
&=\al_j^{n-1-l}\ti\bB(q).
\end{alignat}
\end{lmm}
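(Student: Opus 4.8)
The plan is to prove \e_ref{Bres_e2} by a direct residue computation and then to derive \e_ref{Bres_e1} from it together with an algebraic identity for $\bM\F/\F$ and the Residue Theorem on $S^2$; this follows the strategy of \cite[Section~3]{bcov1} for $l\!=\!1$, with Propositions~\ref{HG_thm2} and~\ref{HG_thm1} replacing their $l\!=\!1$ counterparts.

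For \e_ref{Bres_e2}, write $\Lambda(\hb)\equiv\log(\ne^{-J(q)\al_j/\hb}\F(\al_j/\hb,q)/I_0(q))=\log\F(\al_j/\hb,q)+(-J(q)\al_j\hb\iv-\log I_0(q))$. The second summand, multiplied by the prefactor $\frac{(\al_j+\hb)^n-\al_j^n}{\hb^2\prod_{r=1}^l(a_r\al_j+\hb)}$, is rational in $\hb$ with poles only at $\hb\in\{0,\i,-a_1\al_j,\dots,-a_l\al_j\}$, so by the Residue Theorem on $S^2$ its contribution to $\fR_{\hb=0,\i,-\a\al_j}$ vanishes, and it remains to compute $\fR_{\hb=0,\i,-\a\al_j}\{\frac{(\al_j+\hb)^n-\al_j^n}{\hb^2\prod_{r=1}^l(a_r\al_j+\hb)}\log\F(\al_j/\hb,q)\}$. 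I would treat the three residues in turn. At each $\hb=-a_r\al_j$ the point is that $\F(-1/a_r,q)=1$: for $d\!\ge\!1$ the numerator $\prod_k\prod_{s=1}^{a_kd}(a_kw\!+\!s)$ of the $q^d$-coefficient of $\F(w,q)$ vanishes at $w=-1/a_r$ to the order equal to the multiplicity of $a_r$ in $\a$ (this is where $a_r\!\ge\!2$ enters, and where $\log\F(\al_j/\hb,q)$ is seen to be holomorphic there), so $\log\F(\al_j/\hb,q)$ vanishes at $\hb=-a_r\al_j$ to at least the order of the pole of the prefactor there and each such residue is $0$. At $\hb=0$, i.e. $w=\al_j/\hb\to\i$, the asymptotic expansion \e_ref{asym} and Proposition~\ref{HG_thm2} give $\log\F(\al_j/\hb,q)=\mu(q)\al_j\hb\iv+\log\Phi_0(q)+O(\hb)$ with $\Phi_0(q)=L^{(l+1)/2}$ and $\log\Phi_0=-\frac{l+1}{2n}\log(1-\a^{\a}q)$, and multiplying by the Laurent expansion $\frac{(\al_j+\hb)^n-\al_j^n}{\hb^2\prod_r(a_r\al_j+\hb)}=\frac{n\al_j^{n-1-l}}{\lr{\a}\hb}+\frac{\al_j^{n-2-l}}{\lr{\a}}(\binom{n}{2}-n\sum_r\frac1{a_r})+O(\hb)$ yields $\al_j^{n-1-l}(\frac n{\lr{\a}}\log\Phi_0+\frac1{\lr{\a}}(\binom{n}{2}-n\sum_r\frac1{a_r})\mu)$, which supplies the $\mu(q)$- and $\log(1-\a^{\a}q)$-terms of $\al_j^{n-1-l}\ti\bB$. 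At $\hb=\i$, i.e. $w\to0$, using $\fR_{\hb=\i}f=-\fR_{w=0}\{w^{-2}f(w\iv)\}$, the Taylor expansion $\log\F(w,q)=\log I_0+J(q)w+\sum_{p\ge2}\lrbr{\log\F(w,q)}_{w;p}w^p$ (the linear coefficient being $J$ since $\bM\F(0,q)=I_1$ and $I_1=1+DJ$), and the identity $\lrbr{\log\F(w,q)}_{w;p}=\lrbr{\log\ti\F(w,q)}_{w;p}$ for $0\!\le\!p\!\le\!n\!-\!1$ (valid because $\F/\ti\F=1+O(w^n)$ coefficientwise in $q$), one gets $-\al_j^{n-1-l}$ times $\eps_0(\a)\log I_0+\eps_1(\a)J(q)+\sum_{p=2}^{n-1-l}\lrbr{\frac{(1+w)^n}{\prod_k(1+a_kw)}}_{w;n-1-l-p}\lrbr{\log\ti\F(w,q)}_{w;p}$, where $\eps_0(\a),\eps_1(\a)$ appear as $\lrbr{\frac{(1+w)^n}{\prod_k(1+a_kw)}}_{w;n-1-l}$ and $\lrbr{\frac{(1+w)^n}{\prod_k(1+a_kw)}}_{w;n-2-l}$. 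Summing the three contributions gives exactly $\al_j^{n-1-l}\ti\bB(q)$.

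To deduce \e_ref{Bres_e1} I would first check, using $\bM\F(w,q)=w\iv D_w[\F(w,q)/I_0(q)]$ and $1+DJ=I_1$, the identity $\frac{I_0(q)\bM\F(\al_j/\hb,q)}{\F(\al_j/\hb,q)}=\frac{\hb}{\al_j}D\Lambda(\hb)+I_1(q)$, whence
\[
\frac{\ne^{-J\al_j/\hb}\bM\F(\al_j/\hb,q)/I_1-1}{\ne^{-J\al_j/\hb}\F(\al_j/\hb,q)/I_0}
=\frac{\hb}{\al_j I_1}D\Lambda(\hb)+\frac{\ne^{-J\al_j/\hb}\F(\al_j/\hb,q)/I_0-1}{\ne^{-J\al_j/\hb}\F(\al_j/\hb,q)/I_0}.
\]
Multiplying by $\frac{(\al_j+\hb)^n-\al_j^n}{\hb^3\prod_{r=1}^l(a_r\al_j+\hb)}$ and applying $\fR_{\hb=0,\i,-\a\al_j}$: the last summand contributes $\al_j^{n-2-l}B(q)$ by the definition of $B$, while for the first summand the rational prefactors are $q$-independent, so $\frac{(\al_j+\hb)^n-\al_j^n}{\hb^3\prod_r(a_r\al_j+\hb)}\cdot\frac{\hb}{\al_j I_1}D\Lambda=\frac1{\al_j I_1}D\bigl(\frac{(\al_j+\hb)^n-\al_j^n}{\hb^2\prod_r(a_r\al_j+\hb)}\Lambda\bigr)$, and since $D$ commutes with the residues at the $q$-independent points $0,\i,-a_r\al_j$, \e_ref{Bres_e2} gives its residue sum as $\frac1{\al_j I_1}D(\al_j^{n-1-l}\ti\bB)=\al_j^{n-2-l}I_1\iv D\ti\bB=\al_j^{n-2-l}Q\frac{\nd\ti\bB}{\nd Q}$. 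Adding the two contributions gives \e_ref{Bres_e1}.

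The hard part will be the bookkeeping in the $\hb=\i$ and $\hb=-a_r\al_j$ pieces of \e_ref{Bres_e2}: one must verify that only Taylor coefficients of $\log\F(w,q)$ at $w=0$ of order $\le n\!-\!1\!-\!l\le n\!-\!1$ enter, so that the replacement of $\log\F$ by $\log\ti\F$ is legitimate; one must argue the vanishing at $\hb=-a_r\al_j$ also when several $a_r$ coincide, so that the prefactor has a pole of order greater than one there; and one must recognize the resulting rational functions of $\al_j$ and $a_1,\dots,a_l$ as $\eps_0(\a)$, $\eps_1(\a)$ and as the coefficients of $\mu$ and $\log(1-\a^{\a}q)$ in $\ti\bB$ — which amounts to a collection of residue identities on $S^2$ together with Propositions~\ref{HG_thm2} and~\ref{HG_thm1}.
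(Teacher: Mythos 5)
Your proposal follows the same route as the paper's proof: for (\ref{Bres_e2}) you discard the $-J(q)\al_j/\hb-\log I_0(q)$ part by the Residue Theorem on $S^2$, kill the residues at $\hb=-a_r\al_j$, and compute the residues at $\hb=0$ and $\hb=\i$ from the asymptotic expansion (\ref{asym}) together with Proposition~\ref{HG_thm2} and from the Taylor expansion of $\log\ti\F$ at $w=0$, respectively; and you deduce (\ref{Bres_e1}) from (\ref{Bres_e2}) via the identity
$\ne^{-J(q)w}\bM\F(w,q)/I_1(q)=\{1+\frac{Q}{w}\frac{\nd}{\nd Q}\}\big(\ne^{-J(q)w}\F(w,q)/I_0(q)\big)$,
which is exactly your relation $I_0\,\bM\F/\F=\frac{\hb}{\al_j}D\Lambda+I_1$ rewritten. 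The $\hb=0$ and $\hb=\i$ computations and the derivation of (\ref{Bres_e1}) all check out.

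The one claim that is false as stated is $\F(-1/a_r,q)=1$. The numerator of the $q^d$-coefficient of $\F(w,q)$ does vanish at $w=-1/a_r$ to order at least the multiplicity of $a_r$ in $\a$ (in fact to order $\#\{k:a_r\,|\,a_k\}$), but its denominator $\prod_{s=1}^d\big[(w+s)^n-w^n\big]$ \emph{also} vanishes there, to order one, precisely when $a_r=2$ and $n$ is even. Thus if $n$ is even and $2$ occurs in $\a$ with no other even entry — e.g.\ $\a=(2,3,3)$, $n=8$, where the coefficient of $q$ in $\F(-1/2,q)$ equals $9/4$ — then $\F(-1/2,q)\neq1$ and $\log\F(\al_j/\hb,q)$ does \emph{not} vanish at $\hb=-2\al_j$. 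The residue there still vanishes, but for a reason your argument never invokes: for $n$ even the factor $(\al_j+\hb)^n-\al_j^n$ also vanishes at $\hb=-2\al_j$, lowering the pole order of the prefactor by one, so that in all cases the $q$-coefficients of $\big[(\al_j+\hb)^n-\al_j^n\big]\big(\F(\al_j/\hb,q)-1\big)$ — and hence of the prefactor times $\log\F(\al_j/\hb,q)$ — vanish at $\hb=-a_r\al_j$ to at least the order of $\prod_{s=1}^l(a_s\al_j+\hb)$. This is why the paper phrases the order count in terms of $\big[(\al_j+\hb)^n-\al_j^n\big]\F(\al_j/\hb,q)$ rather than $\F$ alone; note also that the delicate case is an $a_r=2$ of multiplicity one, not the case "when several $a_r$ coincide'' that you flag as the remaining bookkeeping.
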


\begin{proof} Since $I_1(q)=1+q\frac{\nd}{\nd q}J(q)$
and $Q\frac{\nd}{\nd Q}=\frac{1}{I_1}q\frac{\nd}{\nd q}$, 
\begin{equation*}\begin{split}
\ne^{-J(q)w}\frac{\bM\F(w,q)}{I_1(q)}
&=\frac{1}{I_1(q)}\left\{1+q\frac{\nd J}{\nd q}(q)+\frac{q}{w}\frac{\nd}{\nd q}\right\}
\left(\ne^{-J(q)w}\frac{\F(w,q)}{I_0(q)}\right)\\
&=\left\{1+\frac{Q}{w}\frac{\nd}{\nd Q}\right\}\left(\ne^{-J(q)w}\frac{\F(w,q)}{I_0(q)}\right).
\end{split}\end{equation*}
Along with the definition of $B$ and \e_ref{Bres_e2}, this gives \e_ref{Bres_e1}.

By the Residue Theorem on $S^2$, the terms $\ne^{-J(q)\al_j/\hb}$
and $I_0(q)$ do not effect the left-hand side of~\e_ref{Bres_e2}.
Since the coefficients of the positive powers of 
$\left[(\al_j+\hb)^n-\al_j^n\right]\F(\al_j/\hb,q)$
vanish at $\hb\!=\!-a_k\al_j$ to the same order as 
$\prod\limits_{r=1}^l(a_r\al_j\!+\!\hb)$,\footnote{because $a_k\neq1$ by assumption}
it follows that 
\begin{equation*}\begin{split}
&\fR_{\hb=0,\i,-\a\al_j}\left\{
\frac{(\al_j\!+\!\hb)^n-\al_j^n}{\hb^2\prod\limits_{r=1}^l(a_r\al_j\!+\!\hb)}
\log\left(\ne^{-J(q)\frac{\al_j}{\hb}}\frac{\F(\al_j/\hb,q)}{I_0(q)}\right)\right\}\\
&\hspace{2in}
=\fR_{\hb=0,\i}\left\{
\frac{(\al_j\!+\!\hb)^n-\al_j^n}{\hb^2\prod\limits_{r=1}^l(a_r\al_j\!+\!\hb)}
\log \F(\al_j/\hb,q)\right\}.
\end{split}\end{equation*}
Expanding $(\al_j\!+\!\hb)^n-\al_j^n$ and using \e_ref{asym}, we obtain
\begin{equation*}\begin{split}
\fR_{\hb=0}\left\{
\frac{(\al_j\!+\!\hb)^n-\al_j^n}{\hb^2\prod\limits_{r=1}^l(a_r\al_j\!+\!\hb)}
\log \F(\al_j/\hb,q)\right\}
=\frac{\al_j^{n-1-l}}{\lr{\a}}\left(
\left[\binom{n}{2}-n\sum_{r=1}^l\frac{1}{a_r}\right]\mu(q)
+n\log\Phi_0(q)\right).
\end{split}\end{equation*}
On the other hand,
\begin{equation*}\begin{split}
\fR_{\hb=\i}\left\{
\frac{(\al_j\!+\!\hb)^n-\al_j^n}{\hb^2\prod\limits_{r=1}^l(a_r\al_j\!+\!\hb)}
\log \F(\al_j/\hb,q)\right\}
&=-\fR_{w=0}\left\{
\frac{(\al_jw\!+\!1)^n-\al_j^nw^n}{w^{n-l}\prod\limits_{r=1}^l(1\!+\!a_r\al_jw)}
\log \F(w\al_j,q)\right\}\\
&=-\al_j^{n-1-l}\sum_{p=0}^{n-1-l}
\left\llbracket
\frac{(1\!+\!w)^n}{\prod\limits_{k=1}^l(1\!+\!a_k w)}\right\rrbracket_{w;n-1-l-p}
\hspace{-.5in}\left\llbracket\log\F(w,q)\right\rrbracket_{w;p}\,.
\end{split}\end{equation*}
Since $\lrbr{\F(w,q)}_{w;0}\!=\!I_0(q)$ and 
$\lrbr{\F(w,q)}_{w;1}\!=\!J(q)$,
\e_ref{Bres_e2} follows by adding up the last two equations and using
\e_ref{chclass_e}, the second identity in \e_ref{muPhi0_e}, and \e_ref{Ldfn_e0}.
\end{proof}

We now complete the proof of Theorem~\ref{main_thm}.
By Proposition~\ref{equivred_prp}, Lemma~\ref{algprop_lmm},
\e_ref{Z1pt_lmm_e}, and the definition of $B(q)$ above,
$$\cB_i(Q)-\al_i^{n-2}\frac{\lr{\a}}{24}B(q)
\in\cI\cdot\iQ\big[\big[q\big]\big]. $$
By Proposition~\ref{equivred_prp}, Lemma~\ref{algprop_lmm}, 
\e_ref{Z1pt2_lmm_e}, and~\e_ref{Bres_e1}, 
$$\sum_{j=1}^n\ti\cB_{ij}(Q)
+\frac{\lr{\a}}{24}\sum_{j=1}^n
 \left(\frac{\al_j^{n-2}}{\prod\limits_{k\neq j}(\al_j\!-\!\al_k)}  
    \left[\left(Q\frac{\nd\ti\bB}{\nd Q}(q)+B(q)\right) \right]\al_i^{n-2}\al_j\right)
\in\K_i\big[\big[q\big]\big].$$
By the last two equations, \e_ref{resth_e}, and \e_ref{tibB_e}, 
$$\cB_i(Q)+\sum_{j=1}^n\ti\cB_{ij}(Q)
-\al_i^{n-2}Q\frac{\nd \ti{B}(q)}{\nd Q}\in  \K_i\big[\big[q\big]\big].$$
This concludes the proof of  Theorem~\ref{main_thm}.

\vspace{.2in}

\noindent
{\it Department of Mathematics, SUNY Stony Brook, NY 11794-3651\\
alexandra@math.sunysb.edu}

\end{document}